\documentclass[12pt, reqno]{amsart}

\usepackage{cancel}
\usepackage{stmaryrd}
\usepackage{amsmath}
\usepackage{amsfonts}
\usepackage{amssymb}
\usepackage[all,cmtip]{xy}           
\usepackage{xspace}
\usepackage{bbding}
\usepackage{txfonts}
\usepackage[shortlabels]{enumitem}
\usepackage{cite}

\usepackage{tikz}
\usepackage{titletoc}

\usepackage{ifpdf}
\ifpdf
 \usepackage[colorlinks,final,backref=page,hyperindex]{hyperref}
\else
 \usepackage[colorlinks,final,backref=page,hyperindex,hypertex]{hyperref}
\fi

\usepackage[active]{srcltx}

\makeatletter

\begin{document}
\newcommand {\emptycomment}[1]{} 

\baselineskip=15pt
\newcommand{\nc}{\newcommand}
\newcommand{\delete}[1]{}
\nc{\mfootnote}[1]{\footnote{#1}} 
\nc{\todo}[1]{\tred{To do:} #1}

\nc{\mlabel}[1]{\label{#1}}  
\nc{\mcite}[1]{\cite{#1}}  
\nc{\mref}[1]{\ref{#1}}  
\nc{\meqref}[1]{\eqref{#1}} 
\nc{\mbibitem}[1]{\bibitem{#1}} 
\delete{
\nc{\mlabel}[1]{\label{#1}  
{\hfill \hspace{1cm}{\bf{{\ }\hfill(#1)}}}}
\nc{\mcite}[1]{\cite{#1}{{\bf{{\ }(#1)}}}}  
\nc{\mref}[1]{\ref{#1}{{\bf{{\ }(#1)}}}}  
\nc{\meqref}[1]{\eqref{#1}{{\bf{{\ }(#1)}}}} 
\nc{\mbibitem}[1]{\bibitem[\bf #1]{#1}} 
}

\newcommand {\comment}[1]{{\marginpar{*}\scriptsize\textbf{Comments:} #1}}
\nc{\mrm}[1]{{\rm #1}}
\nc{\id}{\mrm{id}}  \nc{\Id}{\mrm{Id}}
\nc{\admset}{\{\pm x\}\cup (-x+K^{\times})}

\def\a{\alpha}
\def\admt{admissible to~}
\def\asi{ASI~}
\def\aybe{abhcYBe~}
\def\aybes{abhcYBes~}
\def\b{\beta}
\def\bd{\boxdot}
\def\bbf{\bar{f}}
\def\bF{\bar{F}}
\def\bbF{\bar{\bar{F}}}
\def\bbbf{\bar{\bar{f}}}
\def\bg{\bar{g}}
\def\bG{\bar{G}}
\def\bh{BiHom-}
\def\bbG{\bar{\bar{G}}}
\def\bbg{\bar{\bar{g}}}
\def\bT{\bar{T}}
\def\bt{\bar{t}}
\def\bbT{\bar{\bar{T}}}
\def\bbt{\bar{\bar{t}}}
\def\bR{\bar{R}}
\def\br{\bar{r}}
\def\bbR{\bar{\bar{R}}}
\def\bbr{\bar{\bar{r}}}
\def\bu{\bar{u}}
\def\bU{\bar{U}}
\def\bbU{\bar{\bar{U}}}
\def\bbu{\bar{\bar{u}}}
\def\bw{\bar{w}}
\def\bW{\bar{W}}
\def\bbW{\bar{\bar{W}}}
\def\bbw{\bar{\bar{w}}}
\def\btl{\blacktriangleright}
\def\btr{\blacktriangleleft}
\def\ASIB{\mathbb{ASIB}}
\def\DB{\mathbb{DB}}
\def\MP{\mathbb{MP}}
\def\ci{\circ}
\def\d{\delta}
\def\dd{\diamondsuit}
\def\D{\Delta}
\def\fb{\mathfrak{B}}
\def\frakB{\mathfrak{B}}
\def\G{\Gamma}
\def\g{\gamma}
\def\k{\kappa}
\def\l{\lambda}
\def\lh{\leftharpoonup}
\def\lr{\longrightarrow}
\def\mf{\mathfrak{F}}
\def\my{\mathfrak{Y}}
\def\mx{\mathfrak{X}}
\def\mz{\mathfrak{Z}}
\def\mba{monoidal BiHom-AE }
\def\mb{monoidal BiHom-}
\def\mo{$\mathcal{O}$-}
\def\mr{\mathfrak{R}}
\def\ms{\mathfrak{S}}
\def\N{Nijenhuis~}
\def\o{\otimes}
\def\om{\omega}
\def\opa{\cdot_{A}}
\def\opb{\cdot_{B}}
\def\p{\psi}
\def\sadm{$\my$-admissible~}
\def\r{\rho}
\def\ra{\rightarrow}
\def\rh{\rightharpoonup}
\def\rr{r^{\#}}
\def\s{\sigma}
\def\st{\star}
\def\ti{\times}
\def\tl{\triangleright}
\def\tr{\triangleleft}
\def\v{\varepsilon}
\def\vp{\varphi}
\def\vth{\vartheta}

\newtheorem{thm}{Theorem}[section]
\newtheorem{theorem}[thm]{Theorem}
\newtheorem{lemma}[thm]{Lemma}
\newtheorem{corollary}[thm]{Corollary}
\newtheorem{proposition}[thm]{Proposition}
\theoremstyle{definition}
\newtheorem{definition}[thm]{Definition}
\newtheorem{example}[thm]{Example}
\newtheorem{remark}[thm]{Remark}
\newtheorem{pdef}[thm]{Proposition-Definition}
\newtheorem{condition}[thm]{Condition}
\newtheorem{question}[thm]{Question}
\newtheorem{counterexample}[thm]{Counterexample}
\renewcommand{\labelenumi}{{\rm(\alph{enumi})}}
\renewcommand{\theenumi}{\alph{enumi}}

\nc{\w}[1]{\textcolor{red}{WDG:#1}}
\font\cyr=wncyr10


\title{Averaging Zinbiel bialgebras}

\author[Li]{Bei Li}
\address{School of Mathematical Sciences, Qufu Normal University,  Qufu 273165, P. R. of China}
\email{libei9717@163.com}

\author[Wang]{Dingguo Wang \textsuperscript{*}}
\address{School of Mathematical Sciences, Qufu Normal University,  Qufu 273165, P. R. of China}
\email{dgwang@qfnu.edu.cn}

 \thanks{\textsuperscript{*}Corresponding author}

\date{\today}
 \begin{abstract}
In this paper, we introduce and study averaging Zinbiel bialgebras. We first
define representations of averaging Zinbiel algebras and prove a semi-direct
product characterization. We then introduce matched pairs and Manin triples
of averaging Zinbiel algebras, and characterize averaging Zinbiel bialgebras
by these two structures. We show that every averaging Zinbiel algebra induces
a pre-Lie algebra, and that every averaging Zinbiel bialgebra induces a pre-Lie bialgebra. We also study coboundary averaging Zinbiel bialgebras and introduce the \(S\) admissible Zinbiel Yang-Baxter equation.
Its symmetric solutions give rise to averaging Zinbiel bialgebras. Finally, we
investigate averaging operators on quadratic Rota-Baxter Zinbiel algebras
and relative Rota-Baxter operators on averaging Zinbiel algebras.
\end{abstract}

\medskip
\subjclass[2020]{
17B38, 
16T10,   
16T25.   
}
\keywords{Averaging Zinbiel algebra; Averaging Zinbiel bialgebra; Averaging Zinbiel Yang-Baxter equation; Relative Rota-Baxter operator}

\maketitle

\tableofcontents

\section{Introduction}
Zinbiel algebras, also called dual Leibniz algebras, were introduced by
Loday in the study of Leibniz algebra cohomology \cite{LodayPirashvili1993}.
They are Koszul dual to Leibniz algebras in operad theory
\cite{GinzburgKapranov1994,LodayVallette2012}, and appear in shuffle type
products, rack cohomology, deformation theory and multiple zeta values
\cite{Chapoton2022,CovezFarinatiLebedManchon2023}. Recall that a Zinbiel
algebra is a vector space \(A\) equipped with a bilinear product \(\circ\)
satisfying
\[
x\circ (y\circ z)=(x\circ y+y\circ x)\circ z,
\qquad \forall x,y,z\in A.
\]
The symmetrized product \(x\cdot y=x\circ y+y\circ x\) is commutative and
associative. Thus Zinbiel algebras are naturally connected with
commutative associative algebras, dendriform algebras, pre-Lie algebras and
Leibniz type structures \cite{LodayVallette2012,WY2026}. There has been extensive research on Zinbiel algebras and related structures; see, for example, \cite{GaoGuoZhang2021,GaoYangZhang2026,ShengTangZhu2023}.

Bialgebra structures are used to describe the compatibility between algebra
and coalgebra structures. Lie bialgebras were introduced by Drinfel'd in
connection with Poisson Lie groups and quantum groups \cite{Drinfeld1983},
and they can be characterized by Manin triples and matched pairs of Lie
algebras \cite{KosmannSchwarzbach2004}. Similar bialgebra theories have
been developed for associative, infinitesimal, dendriform and pre-Lie
algebras \cite{Aguiar2001,Aguiar2004,Bai2008}. Recently, Zinbiel
bialgebras were introduced in \cite{WY2026}, where Zinbiel bialgebras,
matched pairs of Zinbiel algebras and Manin triples of Zinbiel algebras
were shown to be equivalent. Moreover, the coboundary case gives rise to
the Zinbiel Yang-Baxter equation, and factorizable Zinbiel bialgebras are
characterized by quadratic Rota-Baxter Zinbiel algebras of nonzero weight
\cite{WY2026}.

Averaging operators form another important class of operators in algebra.
They first appeared in Reynolds' work on turbulence theory
\cite{Reynolds1895}, and were later studied in associative, Lie, Leibniz,
pre-Lie and other operated algebraic settings
\cite{Aguiar2000,Cao2000,Guo2012}. For an algebra \((A,\circ)\), a linear
map \(P:A\to A\) is called an averaging operator if
\[
P(x)\circ P(y)=P(P(x)\circ y)=P(x\circ P(y)),
\qquad \forall x,y\in A.
\]
Averaging Lie bialgebras were studied in \cite{HouShengZhou2026}, where
matched pairs, Manin triples, quadratic structures, Rota-Baxter
constructions and Yang-Baxter constructions were developed. Averaging pre-Lie bialgebras were then considered in \cite{GaoYangZhang2026}.  Averaging (anti-symmetric)
infinitesimal bialgebras were also studied in \cite{HouCui2027,ZhaoLiu}. Motivated
by these works and by the bialgebra theory of Zinbiel algebras in
\cite{WY2026}, we study averaging operators in the Zinbiel bialgebra
setting.

We introduce the representations of averaging Zinbiel algebras and prove
a semi-direct product characterization. We also describe the coregular
representation, which is needed for matched pairs and bialgebras. Then we
introduce matched pairs and Manin triples of averaging Zinbiel algebras.
Given two averaging Zinbiel algebras, we give necessary and sufficient
conditions under which their direct sum carries an averaging Zinbiel
algebra structure. We also show that every averaging Zinbiel algebra
induces a pre-Lie algebra through
\[
x*_P y=P(x)\circ y-x\circ P(y),
\qquad \forall x,y\in A.
\]
Moreover, matched pairs of averaging Zinbiel algebras induce matched pairs
of pre-Lie algebras.

We then define averaging Zinbiel bialgebras. An averaging Zinbiel bialgebra
consists of a Zinbiel bialgebra \((A,\circ,\Delta)\), an averaging operator
\(P\) on the algebra part, an averaging operator \(S\) on the coalgebra part,
and suitable admissibility conditions connecting \(P\) and \(S\). We prove
that averaging Zinbiel bialgebras are characterized by coregular matched
pairs of averaging Zinbiel algebras and by Manin triples of averaging
Zinbiel algebras. We also show that every averaging Zinbiel bialgebra
induces a pre-Lie bialgebra.

Finally, we study two constructions of averaging Zinbiel bialgebras. The
first one comes from coboundary Zinbiel bialgebras. For \(r\in A\otimes A\),
we consider
\[
\Delta_r(x)=
\bigl(\operatorname{id}\otimes(L_x+R_x)-L_x\otimes\operatorname{id}\bigr)r,
\qquad \forall x\in A,
\]
and introduce the \(S\) admissible Zinbiel Yang-Baxter equation. Its
symmetric solutions give rise to averaging Zinbiel bialgebras. The second one comes
from quadratic Rota-Baxter Zinbiel algebras and relative Rota-Baxter
operators. Compatible averaging operators on quadratic Rota-Baxter Zinbiel
algebras produce averaging Zinbiel bialgebras, while relative Rota-Baxter
operators provide descendent averaging Zinbiel algebras, matched pairs and
solutions of the \(S\) admissible Zinbiel Yang-Baxter equation.

The paper is organized as follows. Section~\ref{se:Representations of averaging Zinbiel algebras} studies representations,
semi-direct products and coregular representations of averaging Zinbiel
algebras. Section~\ref{se:Matched pair and Manin triple of averaging Zinbiel algebras} is devoted to matched pairs and Manin triples of
averaging Zinbiel algebras. Section~\ref{se:Averaging Zinbiel bialgebras} introduces averaging Zinbiel
bialgebras, proves their matched pair and Manin triple characterizations,
and studies their relations with induced pre-Lie bialgebras and quadratic
Rota-Baxter Zinbiel algebras. Section~\ref{se:Admissible Zinbiel Yang-Baxter equations} studies the \(S\) admissible
Zinbiel Yang-Baxter equation and relative Rota-Baxter operators on
averaging Zinbiel algebras.

\noindent\textbf{Notation.} Throughout this paper, we fix a field $\mathbf{k}$ of characteristic zero, and all vector spaces and algebras are assumed to be finite-dimensional over $\mathbf{k}$.

\section{Representations of averaging Zinbiel algebras}\label{se:Representations of averaging Zinbiel algebras}
In this section, we introduce averaging Zinbiel algebras and their
representations. We first record the basic definitions and examples, and
then show that every averaging Zinbiel algebra induces a pre-Lie algebra.
After that, we define representations of averaging Zinbiel algebras and
give a semi-direct product characterization. Finally, we discuss dual
representations and the coregular representation, which will be used in the
matched pair and bialgebra constructions below.
\begin{definition}
Let $(A,\circ)$ be a Zinbiel algebra and let $P:A\to A$ be a linear map.
If
\[
P(x)\circ P(y)
=
P\bigl(P(x)\circ y\bigr)
=
P\bigl(x\circ P(y)\bigr),
\qquad \forall x,y\in A,
\]
then $P$ is called an {\bf averaging operator} on $(A,\circ)$, and the pair
$((A,\circ),P)$ is called an {\bf averaging Zinbiel algebra}.
\end{definition}

\begin{definition}\label{def:homomorphism-avg-Zinbiel}
Let $((A,\circ),P)$ and $((A',\circ'),P')$ be two averaging Zinbiel
algebras. A linear map $f:A\to A'$ is called a homomorphism from
$((A,\circ),P)$ to $((A',\circ'),P')$ if
\[
f(x\circ y)=f(x)\circ' f(y),
\qquad \forall x,y\in A,
\]
and
\[
f(P(x))=P'(f(x)),
\qquad \forall x\in A.
\]
Furthermore, if $f$ is invertible, then $f$ is called an {\bf isomorphism}.
\end{definition}

\begin{example}\label{ex:basic-avg-Zinbiel}
Let $(A,\circ)$ be a Zinbiel algebra.

\begin{itemize}
\item[\emph{(a)}]
The identity map $P=\operatorname{id}_A$ is an averaging operator on
$(A,\circ)$.

\item[\emph{(b)}]
If a linear map $P:A\to A$ commutes with the left and right multiplication,
namely, for all $x\in A$,
\[
PL_x=L_xP
\qquad\text{and}\qquad
PR_x=R_xP,
\]
where
\[
L_x(y)=x\circ y,
\qquad
R_x(y)=y\circ x,
\]
then $P$ is an averaging operator on $(A,\circ)$.
\end{itemize}
\end{example}

The following result shows that averaging operators on Zinbiel algebras
produce pre-Lie algebra structures in a natural way.
\begin{proposition}\label{prop:induced-prelie-from-avg-Zinbiel}
Let $((A,\circ),P)$ be an averaging Zinbiel algebra. Define a bilinear
operation $*:A\otimes A\to A$ by
\[
x*y:=P(x)\circ y-x\circ P(y),
\qquad \forall x,y\in A.
\]
Then $(A,*)$ is a pre-Lie algebra, that is,
\[
(x*y)*z-x*(y*z)=(y*x)*z-y*(x*z),
\qquad \forall x,y,z\in A.
\]
The pre-Lie algebra $(A,*)$ is called the {\bf induced pre-Lie algebra} of the
averaging Zinbiel algebra $((A,\circ),P)$.
\end{proposition}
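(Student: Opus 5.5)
The plan is a direct computation: expand the associator $(x*y)*z-x*(y*z)$ and show that it is symmetric in $x$ and $y$. That symmetry is exactly the pre-Lie identity. Two ingredients should make this short. The first is that the averaging identities force $P(x*y)=0$. The second is the Zinbiel identity $a\circ(b\circ c)=(a\circ b+b\circ a)\circ c$, applied with $a=P(x)$.

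\emph{Step 1.} By the definition of an averaging operator,
\[
P(x*y)=P\bigl(P(x)\circ y\bigr)-P\bigl(x\circ P(y)\bigr)=0 .
\]
Hence
\[
(x*y)*z=-(x*y)\circ P(z)=-\bigl(P(x)\circ y\bigr)\circ P(z)+\bigl(x\circ P(y)\bigr)\circ P(z).
\]
Similarly, $P(y*z)=0$, so the term $x\circ P(y*z)$ vanishes and
\[
x*(y*z)=P(x)\circ\bigl(P(y)\circ z\bigr)-P(x)\circ\bigl(y\circ P(z)\bigr).
\]

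\emph{Step 2.} Apply the Zinbiel identity to both terms of $x*(y*z)$:
\[
P(x)\circ\bigl(P(y)\circ z\bigr)=\bigl(P(x)\circ P(y)+P(y)\circ P(x)\bigr)\circ z,
\]
\[
P(x)\circ\bigl(y\circ P(z)\bigr)=\bigl(P(x)\circ y+y\circ P(x)\bigr)\circ P(z).
\]
The first expression is manifestly symmetric in $x,y$.

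\emph{Step 3.} Substitute these into the associator. The two terms $\pm\bigl(P(x)\circ y\bigr)\circ P(z)$ cancel, and what remains is
\[
(x*y)*z-x*(y*z)=\bigl(x\circ P(y)\bigr)\circ P(z)+\bigl(y\circ P(x)\bigr)\circ P(z)-\bigl(P(x)\circ P(y)+P(y)\circ P(x)\bigr)\circ z .
\]
This expression is symmetric under $x\leftrightarrow y$. Therefore it equals $(y*x)*z-y*(x*z)$, which proves the proposition.

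There is no real obstacle here. The only point to notice is that $P(x*y)=0$ kills half of the terms. Without using this, one would instead have to expand $P(x*y)$ and $P(y*z)$ and cancel them later by the averaging identities, which is messier but leads to the same result.
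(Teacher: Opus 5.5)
Your proposal is correct and matches the paper's proof essentially line by line. You use the averaging identities to get $P(x*y)=0$, expand the associator, and apply the Zinbiel identity to $P(x)\circ(P(y)\circ z)$ and $P(x)\circ(y\circ P(z))$, arriving at the same expression, which is symmetric under $x\leftrightarrow y$.
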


\begin{proof}
For all $x,y,z\in A$, since $P$ is an averaging operator, we first have
\[
\begin{aligned}
P(x*y)
&=P\bigl(P(x)\circ y-x\circ P(y)\bigr)\\
&=P(P(x)\circ y)-P(x\circ P(y))\\
&=P(x)\circ P(y)-P(x)\circ P(y)\\
&=0.
\end{aligned}
\]
Therefore,
\[
\begin{aligned}
(x*y)*z-x*(y*z)
&=P(x*y)\circ z-(x*y)\circ P(z)   \\
&\quad -P(x)\circ (y*z)+x\circ P(y*z)\\
&=-(P(x)\circ y-x\circ P(y))\circ P(z)\\
&\quad -P(x)\circ\bigl(P(y)\circ z-y\circ P(z)\bigr)\\
&=-(P(x)\circ y)\circ P(z)+(x\circ P(y))\circ P(z)\\
&\quad -P(x)\circ(P(y)\circ z)+P(x)\circ(y\circ P(z)).
\end{aligned}
\]
Using the Zinbiel identity, we obtain
\[
P(x)\circ(P(y)\circ z)
=
\bigl(P(x)\circ P(y)+P(y)\circ P(x)\bigr)\circ z
\]
and
\[
P(x)\circ(y\circ P(z))
=
\bigl(P(x)\circ y+y\circ P(x)\bigr)\circ P(z).
\]
Hence
\[
\begin{aligned}
(x*y)*z-x*(y*z)
&=-(P(x)\circ y)\circ P(z)+(x\circ P(y))\circ P(z)\\
&\quad -\bigl(P(x)\circ P(y)+P(y)\circ P(x)\bigr)\circ z\\
&\quad +\bigl(P(x)\circ y+y\circ P(x)\bigr)\circ P(z)\\
&=(x\circ P(y))\circ P(z)+(y\circ P(x))\circ P(z)\\
&\quad -\bigl(P(x)\circ P(y)+P(y)\circ P(x)\bigr)\circ z.
\end{aligned}
\]
The last expression is symmetric in $x$ and $y$. Thus,
\[
(x*y)*z-x*(y*z)=(y*x)*z-y*(x*z),
\qquad \forall x,y,z\in A.
\]
This proves that $(A,*)$ is a pre-Lie algebra.
\end{proof}

We now turn to representations of averaging Zinbiel algebras. These
representations will be used to describe semi-direct products and matched
pairs.
\begin{definition}
Let $((A,\circ),P)$ be an averaging Zinbiel algebra. A {\bf representation} of
$((A,\circ),P)$ is a pair $((V;\rho,\mu),\alpha)$, where

\begin{itemize}
\item[\emph{(a)}]\cite{WY2026} $(V;\rho,\mu)$ is a representation of the Zinbiel algebra
$(A,\circ)$, namely $\rho,\mu:A\to \operatorname{End}(V)$ are linear maps
satisfying
\begin{eqnarray}
&\rho(x)\rho(y)
=
\rho(x\circ y)+\rho(y\circ x),&\label{eq:representation of Zinbiel algebra1}\\
&\rho(x)\mu(y)
=
\mu(x\circ y)
=
\mu(y)\rho(x)+\mu(y)\mu(x),&\label{eq:representation of Zinbiel algebra2}
\qquad \forall x,y\in A;
\end{eqnarray}

\item[\emph{(b)}] $\alpha:V\to V$ is a linear map;

\item[\emph{(c)}] for all $x\in A$ and $v\in V$, the following compatibility
conditions hold:
\begin{eqnarray}
&\rho(P(x))\alpha(v)
=
\alpha\bigl(\rho(P(x))v\bigr)
=
\alpha\bigl(\rho(x)\alpha(v)\bigr),&\label{eq:representation of averagingZinbiel algebra1}\\
&\mu(P(x))\alpha(v)
=
\alpha\bigl(\mu(P(x))v\bigr)
=
\alpha\bigl(\mu(x)\alpha(v)\bigr).&\label{eq:representation of averagingZinbiel algebra2}
\end{eqnarray}
\end{itemize}
\end{definition}

\begin{definition}
Let $((V_1;\rho_1,\mu_1),\alpha_1)$ and
$((V_2;\rho_2,\mu_2),\alpha_2)$ be two representations of an averaging
Zinbiel algebra $((A,\circ),P)$. A linear map $f:V_1\to V_2$ is called a
{\bf homomorphism} from $((V_1;\rho_1,\mu_1),\alpha_1)$ to
$((V_2;\rho_2,\mu_2),\alpha_2)$ if, for all $x\in A$ and $v\in V_1$,
\[
\rho_2(x)f(v)=f\bigl(\rho_1(x)v\bigr),
\qquad
\mu_2(x)f(v)=f\bigl(\mu_1(x)v\bigr),
\qquad
f(\alpha_1(v))=\alpha_2(f(v)).
\]
Furthermore, if $f$ is invertible, then $f$ is called an {\bf isomorphism}.
\end{definition}

\begin{example}
Let $((A,\circ),P)$ be an averaging Zinbiel algebra. Define
$L,R:A\to \operatorname{End}(A)$ by
\[
L_x(y)=x\circ y,
\qquad
R_x(y)=y\circ x,
\qquad \forall x,y\in A.
\]
Then $((A;L,R),P)$ is a representation of $((A,\circ),P)$, called the
{\bf regular representation} of $((A,\circ),P)$.
\end{example}

\begin{proposition}\label{prop:semi-direct-avg-Zinbiel}
Let $((A,\circ),P)$ be an averaging Zinbiel algebra, let
$(V;\rho,\mu)$ be a representation of the Zinbiel algebra $(A,\circ)$,
and let $\alpha:V\to V$ be a linear map. Define a product
$\circ_{\rho,\mu}$ on $A\oplus V$ by
\[
(x+u)\circ_{\rho,\mu}(y+v)
:=
x\circ y+\rho(x)v+\mu(y)u,
\qquad x,y\in A,\ u,v\in V,
\]
and define a linear map $P_{A\oplus V}:A\oplus V\to A\oplus V$ by
\[
P_{A\oplus V}(x+u):=(P+\alpha)(x+u)=P(x)+\alpha(u).
\]
Then $A\oplus V$, equipped with the product $\circ_{\rho,\mu}$ and the
linear map $P_{A\oplus V}$, is an averaging Zinbiel algebra if and only if
$((V;\rho,\mu),\alpha)$ is a representation of the averaging Zinbiel algebra
$((A,\circ),P)$.

In this case, we call
\[
(A\ltimes_{\rho,\mu}V,\ P+\alpha)
\]
a semi-direct product of averaging Zinbiel algebras.
\end{proposition}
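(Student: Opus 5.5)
We split the statement into two parts and treat them separately. The first part is that $(A\oplus V,\circ_{\rho,\mu})$ is a Zinbiel algebra. This follows from the hypothesis that $(V;\rho,\mu)$ is a representation of $(A,\circ)$, which is the standard semi-direct product result for Zinbiel algebras recalled from \cite{WY2026}. So the only real content is the averaging condition for $P_{A\oplus V}=P+\alpha$:
\[
P_{A\oplus V}(X)\circ_{\rho,\mu}P_{A\oplus V}(Y)=P_{A\oplus V}\bigl(P_{A\oplus V}(X)\circ_{\rho,\mu}Y\bigr)=P_{A\oplus V}\bigl(X\circ_{\rho,\mu}P_{A\oplus V}(Y)\bigr)
\]
for all $X=x+u$ and $Y=y+v$. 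We show that this condition is equivalent to \eqref{eq:representation of averagingZinbiel algebra1} and \eqref{eq:representation of averagingZinbiel algebra2}.

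Next we expand each of the three expressions and sort the result into its $A$-component and its $V$-component. The $A$-components are
\[
P(x)\circ P(y),\qquad P(P(x)\circ y),\qquad P(x\circ P(y)).
\]
These agree because $P$ is an averaging operator. The $V$-components are
\[
\rho(P(x))\alpha(v)+\mu(P(y))\alpha(u),
\]
\[
\alpha\bigl(\rho(P(x))v\bigr)+\alpha\bigl(\mu(y)\alpha(u)\bigr),
\]
\[
\alpha\bigl(\rho(x)\alpha(v)\bigr)+\alpha\bigl(\mu(P(y))u\bigr).
\]

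For the converse direction ("if averaging then representation"), we specialize the test elements, using that the $V$-components depend bilinearly on $(x,v)$ and on $(y,u)$ separately.
\begin{itemize}
\item Take $X=x$ and $Y=v$, so $u=0$ and $y=0$. Equality of the three $V$-components gives exactly \eqref{eq:representation of averagingZinbiel algebra1}.
\item Take $X=u$ and $Y=y$, so $x=0$ and $v=0$. This gives $\mu(P(y))\alpha(u)=\alpha(\mu(y)\alpha(u))=\alpha(\mu(P(y))u)$, which is \eqref{eq:representation of averagingZinbiel algebra2} after reordering.
\end{itemize}

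For the forward direction, we add \eqref{eq:representation of averagingZinbiel algebra1}, applied to $(x,v)$, and \eqref{eq:representation of averagingZinbiel algebra2}, applied to $(y,u)$. This yields equality of the three $V$-components term by term.

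The main obstacle is the bookkeeping of the cross terms. In the second expression the $\mu$-term is $\alpha(\mu(y)\alpha(u))$, while in the third it is $\alpha(\mu(P(y))u)$. The three $\mu$-terms therefore match the three members of \eqref{eq:representation of averagingZinbiel algebra2}, but in a different order than the $\rho$-terms match \eqref{eq:representation of averagingZinbiel algebra1}. We need to track this pairing correctly, noting that a chain of two equalities among three quantities is insensitive to permutation. Once that is done, the equivalence follows directly.
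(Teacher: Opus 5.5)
Your proposal is correct and follows the same route as the paper. Both arguments cite \cite{WY2026} for the Zinbiel part, expand the three expressions into $A$- and $V$-components, and observe that the $A$-components agree because $P$ is averaging. What remains is exactly the two compatibility chains in part (c) of the definition of a representation. Your specialization to $X=x,\ Y=v$ and to $X=u,\ Y=y$ makes explicit the ``only if'' direction, which the paper leaves implicit.
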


\begin{proof}
First, by \cite{WY2026}, we know that
$(A\oplus V,\circ_{\rho,\mu})=:A\ltimes_{\rho,\mu}V$ is a Zinbiel algebra
if and only if $(V;\rho,\mu)$ is a representation of $(A,\circ)$.

Second, by a direct calculation, for all $x,y\in A$ and $u,v\in V$, we have
\[
\begin{aligned}
P_{A\oplus V}(x+u)\circ_{\rho,\mu}P_{A\oplus V}(y+v)
&=
(P(x)+\alpha(u))\circ_{\rho,\mu}(P(y)+\alpha(v))  \\
&=
P(x)\circ P(y)+\rho(P(x))\alpha(v)+\mu(P(y))\alpha(u).
\end{aligned}
\]
Similarly, we can get
\[
\begin{aligned}
P_{A\oplus V}\bigl(P_{A\oplus V}(x+u)\circ_{\rho,\mu}(y+v)\bigr)
&=
P(P(x)\circ y)+\alpha(\rho(P(x))v)+\alpha(\mu(y)\alpha(u)).
\end{aligned}
\]

\[
\begin{aligned}
P_{A\oplus V}\bigl((x+u)\circ_{\rho,\mu}P_{A\oplus V}(y+v)\bigr)
&=
P(x\circ P(y))+\alpha(\rho(x)\alpha(v))+\alpha(\mu(P(y))u).
\end{aligned}
\]

Since \(P\) is an averaging operator on \((A,\circ)\), $P_{A\oplus V}$ is an averaging operator on
$A\ltimes_{\rho,\mu}V$ if and only if
\[
\rho(P(x))\alpha(v)
=
\alpha(\rho(P(x))v)
=
\alpha(\rho(x)\alpha(v)),
\]
and
\[
\mu(P(x))\alpha(v)
=
\alpha(\mu(P(x))v)
=
\alpha(\mu(x)\alpha(v)),
\]
for all $x\in A$ and $v\in V$. Hence the result follows.
\end{proof}

\begin{proposition}\label{prop:dual-rep-avg-Zinbiel}
Let $((A,\circ),P)$ be an averaging Zinbiel algebra, let
$(V;\rho,\mu)$ be a representation of the Zinbiel algebra $(A,\circ)$, and let
$\beta:V\to V$ be a linear map. Define linear maps
$\rho^*,\mu^*:A\to \operatorname{End}(V^*)$ by
\[
\langle \rho^*(x)v^*,v\rangle=-\langle v^*,\rho(x)v\rangle,
\qquad
\langle \mu^*(x)v^*,v\rangle=-\langle v^*,\mu(x)v\rangle,
\]
for all $x\in A$, $v^*\in V^*$ and $v\in V$. Then
\[
\bigl((V^*;-\rho^*-\mu^*,\mu^*),\beta^*\bigr)
\]
is a representation of the averaging Zinbiel algebra $((A,\circ),P)$ if and only if,
for all $x\in A$ and $v\in V$, the following identities hold:
\begin{eqnarray}
&\rho(P(x))\beta(v)
=
\beta\bigl(\rho(P(x))v\bigr)
=
\beta\bigl(\rho(x)\beta(v)\bigr),&\label{eq:representation of the averaging Zinbiel algebra1}\\
&
\mu(P(x))\beta(v)
=
\beta\bigl(\mu(P(x))v\bigr)
=
\beta\bigl(\mu(x)\beta(v)\bigr).&\label{eq:representation of the averaging Zinbiel algebra2}
\end{eqnarray}
\end{proposition}

\begin{proof}
It is straightforward.
\end{proof}
\delete{\begin{proof}
First, by the dual representation result for Zinbiel algebras, we know that
\[
(V^*;-\rho^*-\mu^*,\mu^*)
\]
is a representation of the Zinbiel algebra $(A,\circ)$.

It remains to check the compatibility with the averaging operator. For all
$x\in A$, $v\in V$ and $v^*\in V^*$, we have
\[
\begin{aligned}
\bigl\langle \mu^*(P(x))\beta^*(v^*),v\bigr\rangle
&=
-\bigl\langle \beta^*(v^*),\mu(P(x))v\bigr\rangle  \\
&=
-\bigl\langle v^*,\beta(\mu(P(x))v)\bigr\rangle,
\end{aligned}
\]
and
\[
\begin{aligned}
\bigl\langle \beta^*(\mu^*(P(x))v^*),v\bigr\rangle
&=
\bigl\langle \mu^*(P(x))v^*,\beta(v)\bigr\rangle  \\
&=
-\bigl\langle v^*,\mu(P(x))\beta(v)\bigr\rangle.
\end{aligned}
\]
Moreover,
\[
\begin{aligned}
\bigl\langle \beta^*(\mu^*(x)\beta^*(v^*)),v\bigr\rangle
&=
\bigl\langle \mu^*(x)\beta^*(v^*),\beta(v)\bigr\rangle  \\
&=
-\bigl\langle \beta^*(v^*),\mu(x)\beta(v)\bigr\rangle  \\
&=
-\bigl\langle v^*,\beta(\mu(x)\beta(v))\bigr\rangle.
\end{aligned}
\]
Thus the compatibility condition for $\mu^*$ and $\beta^*$ is equivalent to
\[
\mu(P(x))\beta(v)
=
\beta\bigl(\mu(P(x))v\bigr)
=
\beta\bigl(\mu(x)\beta(v)\bigr).
\]

Next, for the first action $-\rho^*-\mu^*$, we compute
\[
\begin{aligned}
&\bigl\langle (-\rho^*-\mu^*)(P(x))\beta^*(v^*),v\bigr\rangle  \\
&\qquad =
\bigl\langle v^*,\beta(\rho(P(x))v)\bigr\rangle
+
\bigl\langle v^*,\beta(\mu(P(x))v)\bigr\rangle,
\end{aligned}
\]
and
\[
\begin{aligned}
&\bigl\langle \beta^*((-\rho^*-\mu^*)(P(x))v^*),v\bigr\rangle  \\
&\qquad =
\bigl\langle v^*,\rho(P(x))\beta(v)\bigr\rangle
+
\bigl\langle v^*,\mu(P(x))\beta(v)\bigr\rangle.
\end{aligned}
\]
Also,
\[
\begin{aligned}
&\bigl\langle \beta^*((-\rho^*-\mu^*)(x)\beta^*(v^*)),v\bigr\rangle  \\
&\qquad =
\bigl\langle v^*,\beta(\rho(x)\beta(v))\bigr\rangle
+
\bigl\langle v^*,\beta(\mu(x)\beta(v))\bigr\rangle.
\end{aligned}
\]
Using the compatibility condition already obtained for $\mu$, the above three equalities are
equivalent to
\[
\rho(P(x))\beta(v)
=
\beta\bigl(\rho(P(x))v\bigr)
=
\beta\bigl(\rho(x)\beta(v)\bigr).
\]
Therefore,
\[
\bigl((V^*;-\rho^*-\mu^*,\mu^*),\beta^*\bigr)
\]
is a representation of $((A,\circ),P)$ if and only if the two stated compatibility
conditions hold. The proof is completed.
\end{proof}}

\begin{corollary}\label{cor:S-coregular-avg-Zinbiel}
Let $((A,\circ),P)$ be an averaging Zinbiel algebra. For a linear map
$S:A\to A$, the pair
\[
\bigl((A^*;-L^*-R^*,R^*),S^*\bigr)
\]
is a representation of $((A,\circ),P)$ if and only if, for all $x,y\in A$,
the following identities hold:
\[
P(x)\circ S(y)
=
S\bigl(P(x)\circ y\bigr)
=
S\bigl(x\circ S(y)\bigr),
\]
and
\[
S(x)\circ P(y)
=
S\bigl(x\circ P(y)\bigr)
=
S\bigl(S(x)\circ y\bigr).
\]
\end{corollary}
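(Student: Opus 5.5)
The plan is to specialize Proposition~\ref{prop:dual-rep-avg-Zinbiel} to the regular representation $(V;\rho,\mu)=(A;L,R)$ with $\beta=S$. The first step is to identify the dual actions. With $\rho=L$ and $\mu=R$, the maps $\rho^*,\mu^*$ of Proposition~\ref{prop:dual-rep-avg-Zinbiel} are exactly $L^*,R^*$, given by $\langle L^*(x)a^*,y\rangle=-\langle a^*,x\circ y\rangle$ and $\langle R^*(x)a^*,y\rangle=-\langle a^*,y\circ x\rangle$. Hence
\[
\bigl((A^*;-L^*-R^*,R^*),S^*\bigr)=\bigl((V^*;-\rho^*-\mu^*,\mu^*),\beta^*\bigr).
\]
We also need that $(A;L,R)$ is a representation of the Zinbiel algebra $(A,\circ)$, which is part of the regular representation example. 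That example guarantees the hypothesis of Proposition~\ref{prop:dual-rep-avg-Zinbiel}, so the proposition applies directly.

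The second step is to translate the two conditions \eqref{eq:representation of the averaging Zinbiel algebra1} and \eqref{eq:representation of the averaging Zinbiel algebra2} into identities in $A$.
\begin{itemize}
\item For $\rho=L$, $\beta=S$ and $v=y$, condition \eqref{eq:representation of the averaging Zinbiel algebra1} reads $L_{P(x)}S(y)=S(L_{P(x)}y)=S(L_xS(y))$. This is precisely $P(x)\circ S(y)=S(P(x)\circ y)=S(x\circ S(y))$.
\item For $\mu=R$, condition \eqref{eq:representation of the averaging Zinbiel algebra2} reads $R_{P(x)}S(y)=S(R_{P(x)}y)=S(R_xS(y))$. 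This is $S(y)\circ P(x)=S(y\circ P(x))=S(S(y)\circ x)$.
\end{itemize}
Renaming $x\leftrightarrow y$ in the second identity gives the stated $S(x)\circ P(y)=S(x\circ P(y))=S(S(x)\circ y)$.

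The only real point of care is the ``straightforward'' Proposition~\ref{prop:dual-rep-avg-Zinbiel} itself, in particular the compatibility for the first action $-\rho^*-\mu^*$. One must check that pairing against $v$ turns the conditions on $-L^*-R^*$ and $S^*$ into the sum of the $L$- and $R$-conditions, and then that, given the $R$-condition from $\mu^*$, the sum condition is equivalent to the $L$-condition alone. This is done by transposing: $\langle S^*(\phi),v\rangle=\langle\phi,S(v)\rangle$ reverses the order of composition, so each of the three expressions in the dual condition pairs to the corresponding expression in the original condition.

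If one prefers not to rely on that proposition, the same argument can be run directly. Pair each of the three terms with $y\in A$. For the second action $R^*$ this gives, up to a common sign and a reordering of the three terms, $\langle a^*,S(y\circ P(x))\rangle$, $\langle a^*,S(y)\circ P(x)\rangle$ and $\langle a^*,S(S(y)\circ x)\rangle$. Nondegeneracy of the pairing then yields the $R$-identity, and subtracting it from the combined $-L^*-R^*$ identity yields the $L$-identity. The converse follows by reading the same equalities backwards.
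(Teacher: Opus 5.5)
Your proposal is correct and takes the same route as the paper: the paper proves the corollary by specializing Proposition~\ref{prop:dual-rep-avg-Zinbiel} to $(V;\rho,\mu)=(A;L,R)$ with $\beta=S$, and your translation of the two conditions into identities in $A$, including the renaming $x\leftrightarrow y$ in the $R$-identity, is accurate. Your direct pairing check, which subtracts the $R^*$-identity from the $(-L^*-R^*)$-identity, simply supplies the ``straightforward'' proof of that proposition that the paper omits.
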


\begin{proof}
This follows directly from Proposition~\ref{prop:dual-rep-avg-Zinbiel} by taking
$(V;\rho,\mu)=(A;L,R)$ and $\beta=S$.
\end{proof}

\begin{corollary}\label{cor:coregular-rep-avg-Zinbiel}
Let $((A,\circ),P)$ be an averaging Zinbiel algebra. Then
\[
\bigl((A^*;-L^*-R^*,R^*),P^*\bigr)
\]
is a representation of $((A,\circ),P)$, called the coregular representation of
$((A,\circ),P)$.
\end{corollary}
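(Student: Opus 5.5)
The plan is to specialize Corollary~\ref{cor:S-coregular-avg-Zinbiel} to $S=P$. By that corollary, $\bigl((A^*;-L^*-R^*,R^*),P^*\bigr)$ is a representation of $((A,\circ),P)$ if and only if, for all $x,y\in A$,
\[
P(x)\circ P(y)=P\bigl(P(x)\circ y\bigr)=P\bigl(x\circ P(y)\bigr)
\qquad\text{and}\qquad
P(x)\circ P(y)=P\bigl(x\circ P(y)\bigr)=P\bigl(P(x)\circ y\bigr).
\]
So it suffices to verify these two chains of equalities.

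Both chains are exactly the defining identity of an averaging operator on $(A,\circ)$. The second chain is the first one with its two outer terms listed in the other order. Since $P$ is an averaging operator by hypothesis, both conditions hold automatically, and the corollary gives the claim.

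Behind the corollary, the underlying Zinbiel-algebra part is that $(A^*;-L^*-R^*,R^*)$ is a representation of $(A,\circ)$. This is the dual of the regular representation $(A;L,R)$, which is built into Proposition~\ref{prop:dual-rep-avg-Zinbiel} via \cite{WY2026}. Hence nothing further needs checking.

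There is no real obstacle here. The only point needing care is to match the order of the compositions in Proposition~\ref{prop:dual-rep-avg-Zinbiel} with $\rho=L$ and $\mu=R$. For example, $\mu(P(x))\beta(v)=R_{P(x)}P(v)=P(v)\circ P(x)$, which after renaming $v\mapsto x$ and $x\mapsto y$ is the second chain above. Once that matching is confirmed, the result follows immediately.
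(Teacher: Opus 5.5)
Your proposal is correct and follows the paper's own argument: set $S=P$ in Corollary~\ref{cor:S-coregular-avg-Zinbiel}, and both required chains of identities reduce to the defining averaging identity of $P$. Your check of the index matching for the $\mu=R$ term agrees with that corollary.
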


\begin{proof}
This follows directly by taking $S=P$ in Corollary~\ref{cor:S-coregular-avg-Zinbiel}.
\end{proof}

\delete{\begin{example}\label{ex:avg-Zinbiel-direct-sum}
Let $(A,\circ)$ be a Zinbiel algebra. Then the direct sum $A\oplus A$
carries a Zinbiel algebra structure with the product
\[
(x_1,x_2)\circ_{\ltimes}(y_1,y_2)
:=
\bigl(x_1\circ y_1,\ x_1\circ y_2+x_2\circ y_1\bigr),
\qquad
\forall (x_1,x_2),(y_1,y_2)\in A\oplus A.
\]
Indeed, this is the semi-direct product associated with the regular
representation $(A;L,R)$ of $(A,\circ)$.

Define the linear map
\[
P:A\oplus A\longrightarrow A\oplus A,
\qquad
P(x_1,x_2):=(x_2,0).
\]
Then, for all $(x_1,x_2),(y_1,y_2)\in A\oplus A$, we have
\[
\begin{aligned}
P(x_1,x_2)\circ_{\ltimes}P(y_1,y_2)
&=
(x_2,0)\circ_{\ltimes}(y_2,0)  \\
&=
(x_2\circ y_2,0),
\end{aligned}
\]
and
\[
\begin{aligned}
P\bigl(P(x_1,x_2)\circ_{\ltimes}(y_1,y_2)\bigr)
&=
P\bigl((x_2,0)\circ_{\ltimes}(y_1,y_2)\bigr) \\
&=
P\bigl(x_2\circ y_1,\ x_2\circ y_2\bigr) \\
&=
(x_2\circ y_2,0).
\end{aligned}
\]
Similarly,
\[
\begin{aligned}
P\bigl((x_1,x_2)\circ_{\ltimes}P(y_1,y_2)\bigr)
&=
P\bigl((x_1,x_2)\circ_{\ltimes}(y_2,0)\bigr) \\
&=
P\bigl(x_1\circ y_2,\ x_2\circ y_2\bigr) \\
&=
(x_2\circ y_2,0).
\end{aligned}
\]
Therefore,
\[
P(x_1,x_2)\circ_{\ltimes}P(y_1,y_2)
=
P\bigl(P(x_1,x_2)\circ_{\ltimes}(y_1,y_2)\bigr)
=
P\bigl((x_1,x_2)\circ_{\ltimes}P(y_1,y_2)\bigr).
\]
Hence $P$ is an averaging operator on the Zinbiel algebra
$(A\oplus A,\circ_{\ltimes})$. Thus
\[
\bigl((A\oplus A,\circ_{\ltimes}),P\bigr)
\]
is an averaging Zinbiel algebra.
\end{example}}

\section{Matched pair and Manin triple of averaging Zinbiel algebras}\label{se:Matched pair and Manin triple of averaging Zinbiel algebras}
In this section, we study matched pairs and Manin triples of averaging
Zinbiel algebras. We first recall matched pairs of Zinbiel algebras and
then add the compatibility with averaging operators. This leads to the
double construction of averaging Zinbiel algebras. We also show that the
induced pre-Lie algebra construction is compatible with matched pairs.
Finally, we introduce quadratic averaging Zinbiel algebras and Manin
triples of averaging Zinbiel algebras.
\begin{definition}\label{def:matched-pair-Zinbiel}\cite{WY2026}
A matched pair of Zinbiel algebras is a sextuple
\[
\bigl((A,\circ_A),(B,\circ_B),\rho_A,\mu_A,\rho_B,\mu_B\bigr),
\]
where

\begin{itemize}
\item[\emph{(a)}] $(A,\circ_A)$ and $(B,\circ_B)$ are Zinbiel algebras;

\item[\emph{(b)}] $(B;\rho_A,\mu_A)$ is a representation of $(A,\circ_A)$;

\item[\emph{(c)}] $(A;\rho_B,\mu_B)$ is a representation of $(B,\circ_B)$;

\item[\emph{(d)}] for all $x,y\in A$ and $u,v\in B$, the following identities hold:
\begin{eqnarray}
&u\circ_B \mu_A(x)v+\mu_A(\rho_B(v)x)u
=
\mu_A(x)(u\circ_B v+v\circ_B u),&\label{eq:matched-pair-Zinbiel1}\\
&\rho_A(x)(u\circ_B v)-(\rho_A(x)u)\circ_B v
=
\rho_A(\mu_B(u)x)v+(\mu_A(x)u)\circ_B v+\rho_A(\rho_B(u)x)v,&\label{eq:matched-pair-Zinbiel2}\\
&u\circ_B \rho_A(x)v+\mu_A(\mu_B(v)x)u-(\mu_A(x)u)\circ_B v
=
(\rho_A(x)u)\circ_B v+\rho_A(\mu_B(u)x)v+\rho_A(\rho_B(u)x)v,&\label{eq:matched-pair-Zinbiel3}\\
&x\circ_A \mu_B(u)y+\mu_B(\rho_A(y)u)x
=
\mu_B(u)(x\circ_A y+y\circ_A x),&\label{eq:matched-pair-Zinbiel4}\\
&\rho_B(u)(x\circ_A y)-(\rho_B(u)x)\circ_A y
=
\rho_B(\mu_A(x)u)y+(\mu_B(u)x)\circ_A y+\rho_B(\rho_A(x)u)y,&\label{eq:matched-pair-Zinbiel5}\\
&x\circ_A \rho_B(u)y+\mu_B(\mu_A(y)u)x-(\mu_B(u)x)\circ_A y
=
(\rho_B(u)x)\circ_A y+\rho_B(\mu_A(x)u)y+\rho_B(\rho_A(x)u)y.&\label{eq:matched-pair-Zinbiel6}
\end{eqnarray}
\end{itemize}
\end{definition}

\begin{definition}\label{def:matched-pair-avg-Zinbiel}
A {\bf matched pair of averaging Zinbiel algebras} is a sextuple
\[
\Bigl(
((A,\circ_A),P_A),\ ((B,\circ_B),P_B),\
\rho_A,\mu_A,\rho_B,\mu_B
\Bigr),
\]
where

\begin{itemize}
\item[\emph{(a)}] $((A,\circ_A),P_A)$ and $((B,\circ_B),P_B)$ are averaging
Zinbiel algebras;

\item[\emph{(b)}] $((B;\rho_A,\mu_A),P_B)$ is a representation of
$((A,\circ_A),P_A)$;

\item[\emph{(c)}] $((A;\rho_B,\mu_B),P_A)$ is a representation of
$((B,\circ_B),P_B)$;

\item[\emph{(d)}] $(A,B;(\rho_A,\mu_A),(\rho_B,\mu_B))$ is a matched pair of
the Zinbiel algebras $(A,\circ_A)$ and $(B,\circ_B)$.
\end{itemize}

Given a matched pair of averaging Zinbiel algebras, their double admits an
averaging Zinbiel algebra structure.
\end{definition}

\begin{proposition}\label{pro:double-avg-Zinbiel}
Let $((A,\circ_A),P_A)$ and $((B,\circ_B),P_B)$ be two averaging
Zinbiel algebras. Assume that $((B;\rho_A,\mu_A),P_B)$ is a representation
of $((A,\circ_A),P_A)$, and that $((A;\rho_B,\mu_B),P_A)$ is a representation
of $((B,\circ_B),P_B)$.

Define a product $\circ_{\bowtie}$ on $A\oplus B$ by
\[
\begin{aligned}
(x+u)\circ_{\bowtie}(y+v)
:={}&
x\circ_A y+\rho_B(u)y+\mu_B(v)x  \\
&\quad
+u\circ_B v+\rho_A(x)v+\mu_A(y)u,
\end{aligned}
\]
for all $x,y\in A$ and $u,v\in B$, and define a linear map
$P_{\bowtie}:A\oplus B\to A\oplus B$ by
\[
P_{\bowtie}(x+u):=P_A(x)+P_B(u).
\]
Then $((A\oplus B,\circ_{\bowtie}),P_{\bowtie})$ is an averaging
Zinbiel algebra if and only if Eqs.(\mref{eq:matched-pair-Zinbiel1})-(\mref{eq:matched-pair-Zinbiel6}) hold.

In this case, we denote the resulting averaging Zinbiel algebra by
\[
(A\bowtie B,\ P_A\oplus P_B).
\]
\end{proposition}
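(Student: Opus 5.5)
The proof splits into two independent parts: the Zinbiel identity for $\circ_{\bowtie}$, and the averaging property of $P_{\bowtie}$.

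\emph{Zinbiel part.} By the result of \cite{WY2026} on matched pairs of Zinbiel algebras, $(A\oplus B,\circ_{\bowtie})$ is a Zinbiel algebra if and only if Eqs.~(\mref{eq:matched-pair-Zinbiel1})--(\mref{eq:matched-pair-Zinbiel6}) hold. This uses that $(A,\circ_A)$ and $(B,\circ_B)$ are Zinbiel algebras and that $(B;\rho_A,\mu_A)$ and $(A;\rho_B,\mu_B)$ are representations, both of which are part of the hypotheses. If one prefers not to quote this, the check is routine. Expand $X\circ_{\bowtie}(Y\circ_{\bowtie}Z)-(X\circ_{\bowtie}Y+Y\circ_{\bowtie}X)\circ_{\bowtie}Z$ by multilinearity over the eight choices of $X,Y,Z\in A$ or $B$. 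The cases $AAA$ and $BBB$ vanish by the Zinbiel identities. The cases with exactly one element of the other algebra vanish by the representation axioms Eqs.~(\mref{eq:representation of Zinbiel algebra1})--(\mref{eq:representation of Zinbiel algebra2}) in one component. The mixed components in the remaining cases are exactly Eqs.~(\mref{eq:matched-pair-Zinbiel1})--(\mref{eq:matched-pair-Zinbiel6}).

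\emph{Averaging part.} It then remains to show that $P_{\bowtie}$ is automatically an averaging operator under the standing hypotheses. I will compute, for $X=x+u$ and $Y=y+v$, the three expressions
\[
P_{\bowtie}(X)\circ_{\bowtie}P_{\bowtie}(Y),\qquad
P_{\bowtie}\bigl(P_{\bowtie}(X)\circ_{\bowtie}Y\bigr),\qquad
P_{\bowtie}\bigl(X\circ_{\bowtie}P_{\bowtie}(Y)\bigr),
\]
exactly as in the proof of Proposition~\ref{prop:semi-direct-avg-Zinbiel}. Each decomposes into six terms.

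The $A$-component of the first expression is
\[
P_A(x)\circ_A P_A(y)+\rho_B(P_B(u))P_A(y)+\mu_B(P_B(v))P_A(x).
\]
The corresponding components of the second and third expressions are
\[
P_A(P_A(x)\circ_A y)+P_A(\rho_B(P_B(u))y)+P_A(\mu_B(v)P_A(x))
\]
and
\[
P_A(x\circ_A P_A(y))+P_A(\rho_B(u)P_A(y))+P_A(\mu_B(P_B(v))x).
\]
These are matched term by term:
\begin{itemize}
\item the first terms agree because $P_A$ is averaging;
\item the $\rho_B$ terms agree by Eq.~(\mref{eq:representation of averagingZinbiel algebra1}) for the representation $((A;\rho_B,\mu_B),P_A)$ of $((B,\circ_B),P_B)$;
\item the $\mu_B$ terms agree by Eq.~(\mref{eq:representation of averagingZinbiel algebra2}) for the same representation.
\end{itemize}
The $B$-component is handled symmetrically. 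It uses that $P_B$ is averaging and that $((B;\rho_A,\mu_A),P_B)$ is a representation of $((A,\circ_A),P_A)$.

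Combining the two parts, $((A\oplus B,\circ_{\bowtie}),P_{\bowtie})$ is an averaging Zinbiel algebra if and only if $(A\oplus B,\circ_{\bowtie})$ is Zinbiel, which holds if and only if Eqs.~(\mref{eq:matched-pair-Zinbiel1})--(\mref{eq:matched-pair-Zinbiel6}) hold.

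\emph{Main obstacle.} The only real care needed is in the averaging step. The terms must be paired in the correct order in each of the three expressions, so that the placement of $P$ inside or outside the action matches the exact form of Eqs.~(\mref{eq:representation of averagingZinbiel algebra1})--(\mref{eq:representation of averagingZinbiel algebra2}). Mismatching which argument carries the operator would invoke a compatibility condition that is not among the hypotheses.
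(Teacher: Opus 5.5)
Your proposal is correct and follows essentially the same route as the paper: the Zinbiel part is reduced to the matched-pair result of \cite{WY2026}. The averaging identities for $P_{\bowtie}$ are then shown to hold automatically, component by component, using that $P_A,P_B$ are averaging together with the compatibility conditions (\ref{eq:representation of averagingZinbiel algebra1})--(\ref{eq:representation of averagingZinbiel algebra2}) for the two representations, and your explicit term-by-term pairing (e.g.\ matching $P_A(\mu_B(v)P_A(x))$ with $\mu_B(P_B(v))P_A(x)$) is more precise than the paper's computation, which cites the dual-representation conditions instead.
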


\begin{proof}
By\cite{WY2026}, we know that $(A\oplus B,\circ_{\bowtie})$ is a Zinbiel algebra
if and only if Eqs.(\mref{eq:matched-pair-Zinbiel1})-(\mref{eq:matched-pair-Zinbiel6}) hold.

It remains to check that $P_{\bowtie}$ is an averaging operator on
$(A\oplus B,\circ_{\bowtie})$. By Eqs.(\mref{eq:representation of the averaging Zinbiel algebra1})-(\mref{eq:representation of the averaging Zinbiel algebra2}), for all $x,y\in A$ and $u,v\in B$, we have
\begin{eqnarray*}
&&P_{\bowtie}(x+u)\circ_{\bowtie}P_{\bowtie}(y+v)
-P_{\bowtie}\bigl(P_{\bowtie}(x+u)\circ_{\bowtie}(y+v)\bigr) \\
&&=
\bigl(P_A(x)+P_B(u)\bigr)\circ_{\bowtie}
\bigl(P_A(y)+P_B(v)\bigr)-P_{\bowtie}\Bigl(
P_A(x)\circ_A y+\rho_B(P_B(u))y+\mu_B(v)P_A(x)\Bigr) \\
&&=
P_A(x)\circ_A P_A(y)
+\rho_B(P_B(u))P_A(y)
+\mu_B(P_B(v))P_A(x) \\
&&+P_B(u)\circ_B P_B(v)
+\rho_A(P_A(x))P_B(v)
+\mu_A(P_A(y))P_B(u)\\
&&-P_A(P_A(x)\circ_A y)
-P_A(\rho_B(P_B(u))y)
-P_A(\mu_B(v)P_A(x)) \\
&&-P_B(P_B(u)\circ_B v)
-P_B(\rho_A(P_A(x))v)
-P_B(\mu_A(y)P_B(u))=0.
\end{eqnarray*}
Similarly, \[
P_{\bowtie}(x+u)\circ_{\bowtie}P_{\bowtie}(y+v)
=
P_{\bowtie}\bigl((x+u)\circ_{\bowtie}P_{\bowtie}(y+v)\bigr).
\]
Thus $P_{\bowtie}$ is an averaging operator on
$(A\oplus B,\circ_{\bowtie})$.

Consequently, $((A\oplus B,\circ_{\bowtie}),P_{\bowtie})$ is an averaging
Zinbiel algebra if and only if Eqs.(\mref{eq:matched-pair-Zinbiel1})-(\mref{eq:matched-pair-Zinbiel6}) hold. This
completes the proof.
\end{proof}

The double construction is compatible with the induced pre-Lie products
defined by averaging operators. This gives the following matched pair of
pre-Lie algebras.
\begin{theorem}\label{thm:induced-matched-pair-prelie-from-avg-Zinbiel}
Let $\Bigl(
((A,\circ_A),P_A),\
((B,\circ_B),P_B),\
\rho_A,\mu_A,\rho_B,\mu_B
\Bigr)$ be a matched pair of averaging Zinbiel algebras. Let $A_{P_A}$ and
$B_{P_B}$ be the induced pre-Lie algebras with products
\[
x*_A y=P_A(x)\circ_A y-x\circ_A P_A(y),
\qquad
u*_B v=P_B(u)\circ_B v-u\circ_B P_B(v).
\]
Then
\[
\Bigl(
A_{P_A},B_{P_B};
(\bar\rho_A,\bar\mu_A),
(\bar\rho_B,\bar\mu_B)
\Bigr)
\]
is a matched pair of pre-Lie algebras, where
\[
\bar\rho_A(x)u
=
\rho_A(P_A(x))u-\rho_A(x)P_B(u),
\]
\[
\bar\mu_A(x)u
=
\mu_A(x)P_B(u)-\mu_A(P_A(x))u,
\]
\[
\bar\rho_B(u)x
=
\rho_B(P_B(u))x-\rho_B(u)P_A(x),
\]
and
\[
\bar\mu_B(u)x
=
\mu_B(u)P_A(x)-\mu_B(P_B(u))x.
\]
Moreover,
\[
A_{P_A}\bowtie B_{P_B}
=
(A\bowtie B)_{P_A\oplus P_B}
\]
as pre-Lie algebras. This matched pair is called the induced matched pair
of pre-Lie algebras.
\end{theorem}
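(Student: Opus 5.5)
The plan is to avoid checking the pre-Lie matched pair axioms by hand. Instead I will use the standard characterization: given pre-Lie algebras $(A,*_A)$, $(B,*_B)$ and linear maps $\bar\rho_A,\bar\mu_A:A\to\operatorname{End}(B)$, $\bar\rho_B,\bar\mu_B:B\to\operatorname{End}(A)$, the data form a matched pair of pre-Lie algebras if and only if the product
\[
(x+u)\star(y+v):=x*_Ay+\bar\rho_B(u)y+\bar\mu_B(v)x+u*_Bv+\bar\rho_A(x)v+\bar\mu_A(y)u
\]
makes $A\oplus B$ a pre-Lie algebra. (Here the representation conditions are encoded by the cases where two of the three arguments lie in one summand.) So it suffices to show that $\star$ is pre-Lie.

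First, by Definition~\ref{def:matched-pair-avg-Zinbiel} and Proposition~\ref{pro:double-avg-Zinbiel}, $(A\bowtie B,P_A\oplus P_B)$ is an averaging Zinbiel algebra. By Proposition~\ref{prop:induced-prelie-from-avg-Zinbiel} its induced product
\[
X*Y=P_{\bowtie}(X)\circ_{\bowtie}Y-X\circ_{\bowtie}P_{\bowtie}(Y)
\]
is therefore pre-Lie on $A\oplus B$.

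Next I expand $X*Y$ for $X=x+u$, $Y=y+v$ using the definition of $\circ_{\bowtie}$ and sort the terms by type:
\begin{align*}
P_A(x)\circ_Ay-x\circ_AP_A(y)&=x*_Ay,\\
\rho_B(P_B(u))y-\rho_B(u)P_A(y)&=\bar\rho_B(u)y,\\
\mu_B(v)P_A(x)-\mu_B(P_B(v))x&=\bar\mu_B(v)x.
\end{align*}
The $B$-component gives $u*_Bv+\bar\rho_A(x)v+\bar\mu_A(y)u$ in the same way. This shows $X*Y=X\star Y$, which establishes both the identity $A_{P_A}\bowtie B_{P_B}=(A\bowtie B)_{P_A\oplus P_B}$ and the pre-Lie property of $\star$. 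Restricting $\star$ to $A$ and to $B$ recovers the induced pre-Lie algebras $A_{P_A}$ and $B_{P_B}$, which are subalgebras.

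Finally, I extract the matched pair axioms by specializing the pre-Lie identity for $\star$ to the different choices of arguments from $A$ and $B$.
\begin{itemize}
\item Taking $x,y\in A$, $w\in B$ and reading off the $B$-component shows that $(B;\bar\rho_A,\bar\mu_A)$ is a representation of $A_{P_A}$.
\item The symmetric choice shows that $(A;\bar\rho_B,\bar\mu_B)$ is a representation of $B_{P_B}$.
\item The remaining components give the compatibility identities.
\end{itemize}

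The main obstacle is only the bookkeeping of the sign convention for $\bar\mu$. One has to confirm that the matched pair definition for pre-Lie algebras being used (in the form of Bai's Theorem) matches the convention $\bar\mu(y)u=u\cdot y$, and that the term $-X\circ_{\bowtie}P_{\bowtie}(Y)$ produces exactly $\mu_A(y)P_B(u)-\mu_A(P_A(y))u$, as the expansion above shows. Once that is checked, the argument is formal.
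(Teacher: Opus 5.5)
Your proposal is correct and follows essentially the same route as the paper: form the averaging Zinbiel double $(A\bowtie B,P_A\oplus P_B)$, take its induced pre-Lie product, identify that product componentwise with the double product built from $\bar\rho_A,\bar\mu_A,\bar\rho_B,\bar\mu_B$, and invoke the matched pair characterization of pre-Lie algebras. Your full expansion of $X*Y$ checks out term by term, whereas the paper only computes the mixed products $x*u$ and $u*x$; your final step of extracting the axioms is redundant once you use the ``if and only if'' characterization.
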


\begin{proof}
By the matched pair condition of averaging Zinbiel algebras,
\[
\bigl((A\bowtie B,\circ_{\bowtie}),P_A\oplus P_B\bigr)
\]
is an averaging Zinbiel algebra, where
\[
\begin{aligned}
(x+u)\circ_{\bowtie}(y+v)
={}&
x\circ_A y+\rho_B(u)y+\mu_B(v)x  \\
&\quad
+u\circ_B v+\rho_A(x)v+\mu_A(y)u .
\end{aligned}
\]
Hence Proposition~\ref{prop:induced-prelie-from-avg-Zinbiel} gives an
induced pre-Lie algebra $(A\bowtie B)_{P_A\oplus P_B}$ with product
\[
X*Y=(P_A\oplus P_B)(X)\circ_{\bowtie}Y
-X\circ_{\bowtie}(P_A\oplus P_B)(Y).
\]
Clearly, $A_{P_A}$ and $B_{P_B}$ are pre-Lie subalgebras.

For $x\in A$ and $u\in B$, we have
\[
\begin{aligned}
x*u
&=
P_A(x)\circ_{\bowtie}u-x\circ_{\bowtie}P_B(u)\\
&=
\mu_B(u)P_A(x)+\rho_A(P_A(x))u
-\mu_B(P_B(u))x-\rho_A(x)P_B(u)\\
&=
\bar\mu_B(u)x+\bar\rho_A(x)u,
\end{aligned}
\]
and similarly,
\[
\begin{aligned}
u*x
&=
P_B(u)\circ_{\bowtie}x-u\circ_{\bowtie}P_A(x)\\
&=
\rho_B(P_B(u))x+\mu_A(x)P_B(u)
-\rho_B(u)P_A(x)-\mu_A(P_A(x))u\\
&=
\bar\rho_B(u)x+\bar\mu_A(x)u.
\end{aligned}
\]
Therefore, for all $x,y\in A$ and $u,v\in B$,
\[
\begin{aligned}
(x+u)*(y+v)
={}&
x*_A y+\bar\rho_B(u)y+\bar\mu_B(v)x\\
&\quad
+u*_B v+\bar\rho_A(x)v+\bar\mu_A(y)u.
\end{aligned}
\]
Thus $(A\bowtie B)_{P_A\oplus P_B}$ is exactly the double determined by
$A_{P_A}$, $B_{P_B}$ and the above four actions. By the matched pair
characterization of pre-Lie algebras,
\[
\Bigl(
A_{P_A},B_{P_B};
(\bar\rho_A,\bar\mu_A),
(\bar\rho_B,\bar\mu_B)
\Bigr)
\]
is a matched pair of pre-Lie algebras. The displayed product also gives
\[
A_{P_A}\bowtie B_{P_B}
=
(A\bowtie B)_{P_A\oplus P_B}.
\]
This completes the proof.
\end{proof}

\begin{definition}\label{def:quadratic-Zinbiel}\cite{WY2026}
A quadratic Zinbiel algebra is a Zinbiel algebra $(A,\circ,\omega)$
equipped with a nondegenerate skew symmetric bilinear form
$\omega\in \wedge^2 A^*$ such that the following invariant condition holds:
\begin{equation}\label{eq:quadratic-Zinbiel-invariant}
\omega(x\circ y,z)
=
\omega(y,x\circ z+z\circ x),
\qquad \forall x,y,z\in A.
\end{equation}
\end{definition}

\begin{definition}\label{def:manin-triples-Zinbiel}\cite{WY2026}
A Manin triple of Zinbiel algebras is a triple
$(\mathcal A,A_1,A_2)$, where

\begin{itemize}
\item[\emph{(i)}] $(\mathcal A,\circ_{\mathcal A},\omega)$ is a quadratic
Zinbiel algebra;

\item[\emph{(ii)}] both $A_1$ and $A_2$ are isotropic subalgebras of
$(\mathcal A,\circ_{\mathcal A})$ with respect to $\omega$;

\item[\emph{(iii)}] $\mathcal A=A_1\oplus A_2$ as vector spaces.
\end{itemize}
\end{definition}

\begin{definition}\label{def:quadratic-averaging-Zinbiel}
Let $((A,\circ),P)$ be an averaging Zinbiel algebra and let
$\omega\in \wedge^2 A^*$ be a nondegenerate skew symmetric bilinear form.
The triple $((A,\circ),P,\omega)$ is called a {\bf quadratic averaging Zinbiel
algebra} if $(A,\circ,\omega)$ is a quadratic Zinbiel algebra and the following
compatibility condition holds:
\begin{eqnarray}
\omega(Px,y)-\omega(x,Py)=0,
\qquad \forall x,y\in A.\label{eq:quadratic-avg-Zinbiel-compatible}
\end{eqnarray}
\end{definition}

\begin{proposition}\label{pro:quadratic-avg-Zinbiel-isomorphism}
If $((A,\circ),P,\omega)$ is a quadratic averaging Zinbiel algebra, then the
linear map
\[
\omega^\sharp:A\longrightarrow A^*
\]
defined by
\[
\langle \omega^\sharp(x),y\rangle=\omega(x,y),
\qquad \forall x,y\in A,
\]
is an isomorphism from the regular representation
\[
((A;L,R),P)
\]
to the coregular representation
\[
((A^*;-L^*-R^*,R^*),P^*).
\]
\end{proposition}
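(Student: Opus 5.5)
Since $\omega$ is nondegenerate and $A$ is finite-dimensional, $\omega^\sharp$ is a linear isomorphism. So by the definition of homomorphism of representations it remains to verify three intertwining identities, for all $x,y,z\in A$:
\[
\omega^\sharp(L_x y)=(-L^*_x-R^*_x)\omega^\sharp(y),\qquad
\omega^\sharp(R_x y)=R^*_x\omega^\sharp(y),\qquad
\omega^\sharp(P(y))=P^*(\omega^\sharp(y)).
\]
Each will be checked by pairing both sides with an arbitrary $z\in A$ and reducing to the invariance condition \eqref{eq:quadratic-Zinbiel-invariant}, the skew symmetry of $\omega$, and the compatibility \eqref{eq:quadratic-avg-Zinbiel-compatible}. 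I use the convention $\langle P^*\xi,z\rangle=\langle \xi,P(z)\rangle$, which is the one implicit in Corollary~\ref{cor:S-coregular-avg-Zinbiel}.

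For the $P$-part,
\[
\langle \omega^\sharp(Py),z\rangle=\omega(Py,z)=\omega(y,Pz)=\langle\omega^\sharp(y),Pz\rangle=\langle P^*\omega^\sharp(y),z\rangle,
\]
where the middle equality is exactly \eqref{eq:quadratic-avg-Zinbiel-compatible}.

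For the $L$-part, the left side pairs to $\omega(x\circ y,z)$. The right side pairs to
\[
\langle\omega^\sharp(y),x\circ z+z\circ x\rangle=\omega(y,x\circ z+z\circ x),
\]
and these agree by \eqref{eq:quadratic-Zinbiel-invariant}.

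For the $R$-part, the left side pairs to $\omega(y\circ x,z)$ and the right side to $-\omega(y,z\circ x)$. I will derive the needed identity
\[
\omega(y\circ x,z)=-\omega(y,z\circ x)
\]
from invariance combined with skew symmetry. First apply invariance with the roles $(y,x,z)$:
\[
\omega(y\circ x,z)=\omega(x,\,y\circ z+z\circ y).
\]
Then, using skew symmetry, rewrite $\omega(x,y\circ z)=-\omega(y\circ z,x)$ and $\omega(x,z\circ y)=-\omega(z\circ y,x)$. Applying invariance again to each term gives
\[
-\omega(z,\,y\circ x+x\circ y)-\omega(y,\,z\circ x+x\circ z).
\]
Using $\omega(x\circ y,z)=\omega(y,x\circ z+z\circ x)$ once more to compare, the terms should collapse to $-\omega(y,z\circ x)$.

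The main obstacle is this $R$-part. It is the only step needing more than one use of invariance, and the sign and ordering bookkeeping is error-prone. If the direct manipulation does not close, I would instead use the symmetric-sum form: invariance says $\omega(x\circ y,z)=\omega(y,x\cdot z)$ with $x\cdot z=x\circ z+z\circ x$ commutative. Adding the $(x,y)$ and $(y,x)$ instances gives a formula for $\omega(x\cdot y,z)$. Combining this with skew symmetry should isolate $\omega(y\circ x,z)$ as $-\omega(y,z\circ x)$.
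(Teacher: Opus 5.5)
Your reduction to three intertwining identities, and your verification of the $L$- and $P$-identities, are correct and coincide with the paper's proof (which disposes of the $R$-identity with a bare ``similarly''). The gap is in the $R$-part: the chain you sketch is circular and does not close. After invariance at $(y,x,z)$, $(y,z,x)$, $(z,y,x)$ you have
\[
\omega(y\circ x,z)=-\omega(z,y\circ x)-\omega(z,x\circ y)-\omega(y,z\circ x)-\omega(y,x\circ z).
\]
The invariance at $(x,y,z)$ that you then invoke says $-\omega(z,x\circ y)=\omega(y,x\circ z)+\omega(y,z\circ x)$. So the right side collapses to $-\omega(z,y\circ x)=\omega(y\circ x,z)$, which is where you started, not $-\omega(y,z\circ x)$. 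Reaching the latter would require $\omega(z,y\circ x)=\omega(y,z\circ x)$, which is the claim itself. The fallback does not help either. The summed identity $\omega(x\cdot y,z)=\omega(y,x\cdot z)+\omega(x,y\cdot z)$ involves $\circ$ only through $\cdot$, so it is blind to the antisymmetric part of $\circ$. Together with skew symmetry alone, it cannot pin down $\omega(y\circ x,z)$.

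The missing step is short. Invariance at $(x,y,z)$ together with skew symmetry gives
\[
\omega(x\circ y,z)+\omega(x\circ z,y)=\omega(y,z\circ x),
\]
and the left side is symmetric in $y$ and $z$. Hence $\omega(y,z\circ x)=\omega(z,y\circ x)=-\omega(y\circ x,z)$, which is exactly $\langle R^*_x\omega^\sharp(y),z\rangle=\langle\omega^\sharp(R_xy),z\rangle$.

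You can also repair your own chain: expand only $\omega(y\circ z,x)$ and leave $\omega(z\circ y,x)$ untouched. The $\omega(y\circ x,z)$ terms then cancel, leaving $\omega(x\circ y,z)=\omega(z\circ y,x)=-\omega(x,z\circ y)$. This is the required identity with $x$ and $y$ interchanged. It was the extra expansion of $\omega(z\circ y,x)$, followed by invariance at $(x,y,z)$, that turned the argument into a tautology.
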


\begin{proof}
Since $\omega$ is nondegenerate, $\omega^\sharp:A\to A^*$ is a linear
isomorphism. It remains to prove that $\omega^\sharp$ is a homomorphism of
representations.

For all $x,y,z\in A$, by the definition of $L^*$ and $R^*$, we have
\[
\begin{aligned}
\bigl\langle (-L^*_x-R^*_x)\omega^\sharp(y),z\bigr\rangle
&=
-\bigl\langle L^*_x\omega^\sharp(y),z\bigr\rangle
-\bigl\langle R^*_x\omega^\sharp(y),z\bigr\rangle  =
\bigl\langle \omega^\sharp(y),x\circ z\bigr\rangle
+
\bigl\langle \omega^\sharp(y),z\circ x\bigr\rangle  \\
&=
\omega(y,x\circ z)+\omega(y,z\circ x)  =
\omega(x\circ y,z)  =
\bigl\langle \omega^\sharp(L_x y),z\bigr\rangle .
\end{aligned}
\]
Thus $(-L^*_x-R^*_x)\omega^\sharp(y)=\omega^\sharp(L_x y).$

Similarly, for all $x,y,z\in A$, we obtain $R^*_x\omega^\sharp(y)=\omega^\sharp(R_x y).$

Finally, by the compatibility condition between $P$ and $\omega$, for all
$x,y\in A$,
\[
\begin{aligned}
\bigl\langle P^*\omega^\sharp(x),y\bigr\rangle
&=
\bigl\langle \omega^\sharp(x),P(y)\bigr\rangle =
\omega(x,P(y))  \\
&=
\omega(P(x),y)  =
\bigl\langle \omega^\sharp(P(x)),y\bigr\rangle .
\end{aligned}
\]
Therefore, $P^*\omega^\sharp=\omega^\sharp P.$

Consequently, $\omega^\sharp$ is an isomorphism from the regular
representation $((A;L,R),P)$ to the coregular representation
$((A^*;-L^*-R^*,R^*),P^*)$.
\end{proof}

Now we introduce the notion of Manin triples of averaging Zinbiel algebras by
using quadratic averaging Zinbiel algebras given in
Definition~\ref{def:quadratic-averaging-Zinbiel}.

\begin{definition}\label{def:manin-triple-avg-Zinbiel}
A {\bf Manin triple of averaging Zinbiel algebras} is a triple
\[
\Bigl(
((\mathcal A,\circ_{\mathcal A}),\mathcal P,\omega),\
((A,\circ_A),P_A),\
((B,\circ_B),P_B)
\Bigr),
\]
where

\begin{itemize}
\item[\emph{(a)}]
$((\mathcal A,\circ_{\mathcal A}),\mathcal P,\omega)$ is an even dimensional
quadratic averaging Zinbiel algebra;

\item[\emph{(b)}]
$((A,\circ_A),P_A)$ and $((B,\circ_B),P_B)$ are averaging Zinbiel subalgebras
of $((\mathcal A,\circ_{\mathcal A}),\mathcal P)$, namely $A$ and $B$ are
Zinbiel subalgebras of $(\mathcal A,\circ_{\mathcal A})$ and
\[
\mathcal P|_A=P_A,
\qquad
\mathcal P|_B=P_B;
\]

\item[\emph{(c)}]
Both $A$ and $B$ are isotropic with respect to $\omega$;

\item[\emph{(d)}]
$\mathcal A=A\oplus B$ as vector spaces.
\end{itemize}
\end{definition}

The following remark follows directly from the definition.
\begin{remark}\label{prop:manin-triple-avg-Zinbiel-characterization}
By definition, a Manin triple of averaging Zinbiel algebras is precisely a
Manin triple of Zinbiel algebras compatible with the averaging operator.
More precisely, the restrictions of the averaging operator to the two
isotropic subalgebras are the corresponding averaging operators.
\end{remark}

\section{Averaging Zinbiel bialgebras and averaging operators on quadratic Rota-Baxter Zinbiel algebras}\label{se:Averaging Zinbiel bialgebras}
In this section, we introduce averaging Zinbiel bialgebras. We first define
averaging Zinbiel coalgebras and formulate the compatibility conditions. Then we prove that averaging Zinbiel bialgebras can be
characterized by coregular matched pairs of averaging Zinbiel algebras.
Using the Manin triple interpretation of Zinbiel bialgebras, we also relate
averaging Zinbiel bialgebras to Manin triples of averaging Zinbiel
algebras.
\subsection{Averaging Zinbiel bialgebras}

\begin{definition}\label{def:averaging-Zinbiel-coalgebra}
An {\bf averaging Zinbiel coalgebra} is a pair$((A,\Delta),S)$,
where $(A,\Delta)$ is a Zinbiel coalgebra and $S:A\to A$ is a linear map such
that
\begin{eqnarray}\label{eq:averaging Zinbiel coalgebra}
(S\otimes S)\Delta(x)
=
(S\otimes \operatorname{id}_A)\Delta(S(x))
=
(\operatorname{id}_A\otimes S)\Delta(S(x)),
\qquad \forall x\in A.
\end{eqnarray}
\end{definition}

\begin{proposition}\label{prop:dual-averaging-Zinbiel-coalgebra}
Let $((A,\Delta),S)$ be an averaging Zinbiel coalgebra. Define a linear map
$S^*:A^*\to A^*$ by
\[\langle S^*(\xi),x\rangle
=
\langle \xi,S(x)\rangle,
\qquad \forall \xi\in A^*,\ x\in A,
\]
and define a bilinear map
\[
\circ_{A^*}:A^*\otimes A^*\to A^*
\]
by
\[
\langle \xi\circ_{A^*}\eta,x\rangle
=
\langle \xi\otimes \eta,\Delta(x)\rangle,
\qquad \forall \xi,\eta\in A^*,\ x\in A.
\]
Then
\[
((A^*,\circ_{A^*}),S^*)
\]
is an averaging Zinbiel algebra.
\end{proposition}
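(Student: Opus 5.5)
The plan is to dualize both halves of the structure separately. Zinbiel coalgebras are dual to Zinbiel algebras in finite dimension, so first we recall that $(A^*,\circ_{A^*})$ is a Zinbiel algebra. Then we transport the three equalities of Eq.~(\ref{eq:averaging Zinbiel coalgebra}) to $A^*$ by pairing with elements of $A$ and using nondegeneracy of the pairing $\langle\ ,\ \rangle$ between $A^*$ and $A$.

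For the first step, recall that a Zinbiel coalgebra $(A,\Delta)$ is, by definition (as in \cite{WY2026}), one whose comultiplication satisfies the identity dual to $x\circ(y\circ z)=(x\circ y+y\circ x)\circ z$, namely
\[
(\operatorname{id}\otimes\Delta)\Delta=(\Delta\otimes\operatorname{id})\Delta+(\tau\otimes\operatorname{id})(\Delta\otimes\operatorname{id})\Delta ,
\]
where $\tau$ is the flip. Pairing this identity with $\xi\otimes\eta\otimes\zeta\in A^{*\otimes 3}$ gives exactly
\[
\xi\circ_{A^*}(\eta\circ_{A^*}\zeta)=(\xi\circ_{A^*}\eta+\eta\circ_{A^*}\xi)\circ_{A^*}\zeta .
\]
Here one uses $\langle \xi\otimes\eta,\Delta(x)\rangle=\langle\xi\circ_{A^*}\eta,x\rangle$ iterated, with $A^*\otimes A^*\cong(A\otimes A)^*$ by finite-dimensionality. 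This step is standard and can be cited.

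For the averaging condition, fix $\xi,\eta\in A^*$ and $x\in A$ and compute three pairings. First,
\[
\langle S^*\xi\circ_{A^*}S^*\eta,x\rangle=\langle S^*\xi\otimes S^*\eta,\Delta(x)\rangle=\langle\xi\otimes\eta,(S\otimes S)\Delta(x)\rangle .
\]
Second,
\[
\langle S^*(S^*\xi\circ_{A^*}\eta),x\rangle=\langle S^*\xi\circ_{A^*}\eta,S(x)\rangle=\langle\xi\otimes\eta,(S\otimes\operatorname{id})\Delta(S(x))\rangle .
\]
Third, in the same way,
\[
\langle S^*(\xi\circ_{A^*}S^*\eta),x\rangle=\langle\xi\otimes\eta,(\operatorname{id}\otimes S)\Delta(S(x))\rangle .
\]
By Eq.~(\ref{eq:averaging Zinbiel coalgebra}) the three right-hand sides agree for all $\xi,\eta,x$. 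Nondegeneracy of the pairing then yields
\[
S^*(\xi)\circ_{A^*}S^*(\eta)=S^*(S^*(\xi)\circ_{A^*}\eta)=S^*(\xi\circ_{A^*}S^*(\eta)),
\]
so $S^*$ is an averaging operator on $(A^*,\circ_{A^*})$.

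No real obstacle is expected. The only point needing care is the bookkeeping of the tensor factor order in the identification $(S^*\otimes S^*)$ versus $(S\otimes S)^*$, and matching the precise convention for the Zinbiel coalgebra axiom with the Zinbiel identity so that the dual product satisfies the identity exactly and not its opposite. If the convention in \cite{WY2026} differs by a flip, we would check the coalgebra axiom there and adjust accordingly.
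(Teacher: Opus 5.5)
Your proposal is correct and is exactly the routine dualization the paper leaves as ``straightforward'': pairing the Zinbiel coalgebra axiom with $\xi\otimes\eta\otimes\zeta$ gives the Zinbiel identity on $A^*$. Your three pairings then correctly turn the averaging coalgebra identity $(S\otimes S)\Delta(x)=(S\otimes\operatorname{id})\Delta(S(x))=(\operatorname{id}\otimes S)\Delta(S(x))$ into $S^*(\xi)\circ_{A^*}S^*(\eta)=S^*(S^*(\xi)\circ_{A^*}\eta)=S^*(\xi\circ_{A^*}S^*(\eta))$.
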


\begin{proof}
 It is straightforward.
\end{proof}

\begin{definition}\label{def:averaging-Zinbiel-bialgebra}
An {\bf averaging Zinbiel bialgebra} is a quintuple $(A,\circ,\Delta,P,S)$, where
\begin{itemize}
\item[\emph{(a)}] $((A,\circ),P)$ is an averaging Zinbiel algebra;

\item[\emph{(b)}] $((A,\Delta),S)$ is an averaging Zinbiel coalgebra;

\item[\emph{(c)}] $(A,\circ,\Delta)$ is a Zinbiel bialgebra;

\item[\emph{(d)}] $((A,\circ),P)$ is $S$-admissible, namely, for all
$x,y\in A$,
\begin{eqnarray}
&P(x)\circ S(y)
=
S\bigl(P(x)\circ y\bigr)
=
S\bigl(x\circ S(y)\bigr),&\label{eq:averaging Zinbiel bialgebraS1}\\
&S(x)\circ P(y)
=
S\bigl(x\circ P(y)\bigr)
=
S\bigl(S(x)\circ y\bigr);&\label{eq:averaging Zinbiel bialgebraS1}
\end{eqnarray}

\item[\emph{(e)}] $((A^*,\circ_{A^*}),S^*)$ is $P^*$-admissible, where
$\circ_{A^*}:A^*\otimes A^*\to A^*$ is defined by
\[
\langle \xi\circ_{A^*}\eta,x\rangle
=
\langle \xi\otimes \eta,\Delta(x)\rangle,
\qquad \forall \xi,\eta\in A^*,\ x\in A.
\]
Equivalently, for all $x\in A$,
\begin{eqnarray}
&(P\otimes S)\Delta(x)
=
(\operatorname{id}_A\otimes S)\Delta(P(x))
=
(P\otimes \operatorname{id}_A)\Delta(P(x)),&\label{eq:averaging Zinbiel bialgebraP*1}\\
&(S\otimes P)\Delta(x)
=
(S\otimes \operatorname{id}_A)\Delta(P(x))
=
(\operatorname{id}_A\otimes P)\Delta(P(x)).&\label{eq:averaging Zinbiel bialgebraP*2}
\end{eqnarray}
\end{itemize}
\end{definition}

\begin{example}\label{ex:two-dimensional-nontrivial-avg-Zinbiel-bialgebra}
Let $A$ be a two dimensional vector space with basis $\{e_1,e_2\}$. Define a
bilinear product $\circ$ on $A$ by
\[
e_1\circ e_1=e_2,
\]
and all other products of basis elements are zero. Define a linear map
$\Delta:A\to A\otimes A$ by
\[
\Delta(e_1)=e_2\otimes e_2,
\qquad
\Delta(e_2)=0.
\]
For any $\lambda\in\mathbb K$, define a linear map $P:A\to A$ by
\[
P(e_1)=e_1+\lambda e_2,
\qquad
P(e_2)=e_2.
\]
Then
\[
(A,\circ,\Delta,P,S=P)
\]
is an averaging Zinbiel bialgebra. In particular, when $\lambda\ne0$, the
averaging operator is nontrivial.
\end{example}

\delete{\begin{proof}
First, $(A,\circ)$ is a Zinbiel algebra. Indeed, the only nonzero product is
$e_1\circ e_1=e_2$, and every product involving $e_2$ is zero. Hence the
Zinbiel identity
\[
x\circ(y\circ z)=(x\circ y+y\circ x)\circ z
\]
holds for all basis elements, and therefore for all $x,y,z\in A$.

Next, $P_\lambda$ is an averaging operator on $(A,\circ)$. For
\[
x=ae_1+be_2,
\qquad
y=ce_1+de_2,
\]
we have
\[
x\circ y=ac\,e_2
\]
and
\[
P_\lambda(x)=ae_1+(\lambda a+b)e_2.
\]
Thus
\[
P_\lambda(x)\circ P_\lambda(y)=ac\,e_2.
\]
Moreover,
\[
P_\lambda(x)\circ y=ac\,e_2,
\qquad
x\circ P_\lambda(y)=ac\,e_2.
\]
Since $P_\lambda(e_2)=e_2$, it follows that
\[
P_\lambda(x)\circ P_\lambda(y)
=
P_\lambda(P_\lambda(x)\circ y)
=
P_\lambda(x\circ P_\lambda(y)).
\]
Hence $((A,\circ),P_\lambda)$ is an averaging Zinbiel algebra.

Now we show that $(A,\circ,\Delta)$ is a Zinbiel bialgebra. Let
\[
r=e_1\otimes e_2+e_2\otimes e_1.
\]
Then $r$ is symmetric. Define
\[
\Delta_r(x)
=
\bigl(\operatorname{id}\otimes(L_x+R_x)-L_x\otimes\operatorname{id}\bigr)r,
\qquad \forall x\in A.
\]
Since
\[
L_{e_1}(e_1)=R_{e_1}(e_1)=e_2,
\qquad
L_{e_1}(e_2)=R_{e_1}(e_2)=0,
\]
and $L_{e_2}=R_{e_2}=0$, we get
\[
\Delta_r(e_1)=e_2\otimes e_2,
\qquad
\Delta_r(e_2)=0.
\]
Thus $\Delta_r=\Delta$.

It remains to note that $r$ satisfies the Zinbiel Yang-Baxter equation. Since
\[
e_1*e_1=2e_2,
\qquad
e_1*e_2=e_2*e_1=e_2*e_2=0,
\]
where $x*y=x\circ y+y\circ x$, a direct computation gives
\[
r_{13}\circ r_{12}
=
r_{23}\circ r_{21}
=
e_2\otimes e_2\otimes e_2,
\]
and
\[
r_{13}*r_{21}
=
r_{12}*r_{23}
=
r_{13}*r_{23}
=
2e_2\otimes e_2\otimes e_2.
\]
Therefore,
\[
\begin{aligned}
\llbracket r,r\rrbracket
&=
-r_{13}\circ r_{12}
-r_{23}\circ r_{21}
+r_{13}*r_{21}
+r_{12}*r_{23}
-r_{13}*r_{23}  \\
&=0.
\end{aligned}
\]
By the coboundary criterion for Zinbiel bialgebras, $(A,\circ,\Delta)$ is a
Zinbiel bialgebra.

Finally, we check the averaging coalgebra part. Since
\[
P_\lambda(e_2)=e_2,
\qquad
\Delta(e_1)=e_2\otimes e_2,
\qquad
\Delta(e_2)=0,
\]
we have, for all $x\in A$,
\[
(P_\lambda\otimes P_\lambda)\Delta(x)
=
(P_\lambda\otimes \operatorname{id})\Delta(P_\lambda x)
=
(\operatorname{id}\otimes P_\lambda)\Delta(P_\lambda x).
\]
Thus $((A,\Delta),P_\lambda)$ is an averaging Zinbiel coalgebra. Since here
we take $S=P_\lambda$, the remaining admissibility conditions in the
definition of averaging Zinbiel bialgebra reduce to the same identities
above and to the averaging identity of $P_\lambda$ on $(A,\circ)$.

Consequently,
\[
(A,\circ,\Delta,P_\lambda,P_\lambda)
\]
is an averaging Zinbiel bialgebra.
\end{proof}}

\delete{\begin{corollary}\label{cor:dual-averaging-Zinbiel-bialgebra}
Let $(A,\circ,\Delta,P)$ be an averaging Zinbiel bialgebra, namely the
coalgebra averaging operator is also $P$. Then
\[
(A^*,\circ_{A^*},\Delta_{A^*},P^*)
\]
is an averaging Zinbiel bialgebra, where $\circ_{A^*}$ is given by
\[
\langle \xi\circ_{A^*}\eta,x\rangle
=
\langle \xi\otimes\eta,\Delta(x)\rangle,
\qquad \forall \xi,\eta\in A^*,\ x\in A,
\]
and $\Delta_{A^*}:A^*\to A^*\otimes A^*$ is given by
\[
\langle \Delta_{A^*}(\xi),x\otimes y\rangle
=
\langle \xi,x\circ y\rangle,
\qquad \forall \xi\in A^*,\ x,y\in A.
\]
\end{corollary}

\begin{proof}
It can be verified directly from
Definition~\ref{def:averaging-Zinbiel-bialgebra}.
\end{proof}}

The following theorem provides an equivalent characterization of averaging
Zinbiel bialgebras via matched pairs of averaging Zinbiel algebras.

\begin{theorem}\label{thm:avg-Zinbiel-bialgebra-matched-pair}
Let $((A,\circ),P)$ and $((A^*,\circ_{A^*}),S^*)$ be averaging Zinbiel
algebras. Define $\Delta:A\to A\otimes A$ by
\[
\langle \xi\circ_{A^*}\eta,x\rangle
=
\langle \xi\otimes\eta,\Delta(x)\rangle,
\qquad \forall \xi,\eta\in A^*,\ x\in A.
\]
Then $(A,\circ,\Delta,P,S)$ is an averaging Zinbiel bialgebra if and only if
\[
\Bigl(
((A,\circ),P),\ ((A^*,\circ_{A^*}),S^*),\
-L_\circ^*-R_\circ^*,\ R_\circ^*,\
-L_{\circ_{A^*}}^*-R_{\circ_{A^*}}^*,\ R_{\circ_{A^*}}^*
\Bigr)
\]
is a matched pair of averaging Zinbiel algebras.
\end{theorem}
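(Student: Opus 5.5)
We split the statement into its Zinbiel part and its averaging part and treat each separately. By \cite{WY2026}, $(A,\circ,\Delta)$ is a Zinbiel bialgebra if and only if
\[
\bigl((A,\circ),(A^*,\circ_{A^*}),-L_\circ^*-R_\circ^*,R_\circ^*,-L_{\circ_{A^*}}^*-R_{\circ_{A^*}}^*,R_{\circ_{A^*}}^*\bigr)
\]
is a matched pair of Zinbiel algebras. Here $(A,\Delta)$ is a Zinbiel coalgebra precisely because $(A^*,\circ_{A^*})$ is a Zinbiel algebra. We take this equivalence as given. It handles condition (c) of Definition~\ref{def:averaging-Zinbiel-bialgebra} against condition (d) of Definition~\ref{def:matched-pair-avg-Zinbiel}, together with the Zinbiel-representation parts of conditions (b) and (c) there.

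For the averaging data, we first note the following. Since $((A^*,\circ_{A^*}),S^*)$ is assumed to be an averaging Zinbiel algebra, dualizing the averaging identity for $S^*$ via $\langle \xi\circ_{A^*}\eta,x\rangle=\langle\xi\otimes\eta,\Delta(x)\rangle$ gives exactly Eq.~\meqref{eq:averaging Zinbiel coalgebra}. For example,
\[
\langle S^*\xi\circ_{A^*}S^*\eta,x\rangle=\langle \xi\otimes\eta,(S\otimes S)\Delta(x)\rangle
\quad\text{and}\quad
\langle S^*(S^*\xi\circ_{A^*}\eta),x\rangle=\langle\xi\otimes\eta,(S\otimes\mathrm{id})\Delta(S x)\rangle .
\]
Hence condition (b) of the bialgebra definition holds automatically, and condition (a) is a hypothesis. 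It therefore remains to match the two admissibility conditions (d) and (e) with the requirements that the actions be representations of averaging Zinbiel algebras.

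For condition (d), we apply Corollary~\ref{cor:S-coregular-avg-Zinbiel}. It states that $((A^*;-L^*-R^*,R^*),S^*)$ is a representation of $((A,\circ),P)$ if and only if Eq.~\meqref{eq:averaging Zinbiel bialgebraS1} holds, which is exactly the $S$-admissibility of $((A,\circ),P)$. For condition (e), we apply the same corollary with the roles exchanged. The algebra is now $((A^*,\circ_{A^*}),S^*)$ and the linear map is $P^*:A^*\to A^*$, and we identify $A^{**}=A$ and $(P^*)^*=P$ by finite dimensionality. The corollary then says that $((A;-L_{\circ_{A^*}}^*-R_{\circ_{A^*}}^*,R_{\circ_{A^*}}^*),P)$ is a representation of $((A^*,\circ_{A^*}),S^*)$ if and only if $((A^*,\circ_{A^*}),S^*)$ is $P^*$-admissible.

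The only genuine computation is to check that $P^*$-admissibility is equivalent to Eqs.~\meqref{eq:averaging Zinbiel bialgebraP*1}--\meqref{eq:averaging Zinbiel bialgebraP*2}, which the definition asserts but does not prove. To do this, we pair each term with $x$. For instance,
\[
\langle S^*\xi\circ_{A^*}P^*\eta,x\rangle=\langle\xi\otimes\eta,(S\otimes P)\Delta(x)\rangle
\quad\text{and}\quad
\langle P^*(S^*\xi\circ_{A^*}\eta),x\rangle=\langle\xi\otimes\eta,(S\otimes\mathrm{id})\Delta(Px)\rangle .
\]
The remaining four terms are handled in the same way. The main obstacle here is bookkeeping: keeping straight which factor receives $P$ and which receives $S$ under dualization, and making sure the left and right slots of the corollary correspond to Eqs.~\meqref{eq:averaging Zinbiel bialgebraP*1} and \meqref{eq:averaging Zinbiel bialgebraP*2} respectively. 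Once this is checked, we combine it with the equivalence from \cite{WY2026} and with Definition~\ref{def:matched-pair-avg-Zinbiel}, and both directions of the theorem follow at once.
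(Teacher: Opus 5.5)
Your proposal is correct and follows the paper's route. You cite \cite{WY2026} for the Zinbiel matched-pair equivalence, then use Corollary~\ref{cor:S-coregular-avg-Zinbiel}, once directly and once with the roles of $A$ and $A^*$ exchanged, to identify $S$-admissibility and $P^*$-admissibility with the averaging compatibility of the two coregular representations. Your extra checks are also correct and fill in steps the paper leaves implicit: that the averaging coalgebra condition follows automatically from $S^*$ being an averaging operator on $A^*$, and that $P^*$-admissibility dualizes to the coalgebra-side equations in Definition~\ref{def:averaging-Zinbiel-bialgebra}(e).
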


\begin{proof}
By\cite{WY2026}, we know that
$(A,\circ,\Delta)$ is a Zinbiel bialgebra if and only if
\[
\Bigl(
(A,\circ),\ (A^*,\circ_{A^*}),\
-L_\circ^*-R_\circ^*,\ R_\circ^*,\
-L_{\circ_{A^*}}^*-R_{\circ_{A^*}}^*,\ R_{\circ_{A^*}}^*
\Bigr)
\]
is a matched pair of Zinbiel algebras.

It remains to compare the averaging structures. By
Definition~\ref{def:averaging-Zinbiel-bialgebra}, the conditions saying that
$((A,\circ),P)$ is $S$-admissible imply that $\bigl((A^*;-L_\circ^*-R_\circ^*,R_\circ^*),S^*\bigr)$
is a representation of the averaging Zinbiel algebra $((A,\circ),P)$.
Similarly, the admissibility conditions on $A^*$ imply that $\bigl((A;-L_{\circ_{A^*}}^*-R_{\circ_{A^*}}^*,
R_{\circ_{A^*}}^*),P\bigr)$ is a representation of the averaging Zinbiel algebra
$((A^*,\circ_{A^*}),S^*)$.

Therefore $(A,\circ,\Delta,P,S)$ is an averaging Zinbiel bialgebra if and
only if the above coregular matched pair is a matched pair of averaging
Zinbiel algebras. This completes the proof.
\end{proof}

The matched pair characterization also leads to a Manin triple
interpretation. Recall that a Zinbiel bialgebra gives the standard Manin
triple on \(A\bowtie A^*\) with the canonical skew symmetric bilinear form
\[
\omega(x+\xi,y+\eta)=\langle \xi,y\rangle-\langle \eta,x\rangle.
\]

Therefore, we have the following result.

\begin{theorem}\label{thm:manin-triple-avg-Zinbiel-bialgebra}
There is a one to one correspondence between Manin triples of averaging
Zinbiel algebras and averaging Zinbiel bialgebras.
\end{theorem}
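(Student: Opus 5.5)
The plan is to pass through Theorem~\ref{thm:avg-Zinbiel-bialgebra-matched-pair} and the known Zinbiel-level correspondence from \cite{WY2026}: Zinbiel bialgebras $(A,\circ,\Delta)$ correspond to Manin triples $(A\bowtie A^*,A,A^*)$ with the canonical form $\omega(x+\xi,y+\eta)=\langle \xi,y\rangle-\langle \eta,x\rangle$. The only new ingredient is the averaging operator. It is forced to be $\mathcal P=P\oplus S^*$, and compatibility with $\omega$ must translate into exactly the admissibility conditions (d),(e) of Definition~\ref{def:averaging-Zinbiel-bialgebra}.

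\emph{From bialgebras to Manin triples.} Let $(A,\circ,\Delta,P,S)$ be an averaging Zinbiel bialgebra.
\begin{itemize}
\item By Theorem~\ref{thm:avg-Zinbiel-bialgebra-matched-pair}, the coregular data form a matched pair of averaging Zinbiel algebras.
\item Proposition~\ref{pro:double-avg-Zinbiel} then makes $(A\bowtie A^*,P\oplus S^*)$ an averaging Zinbiel algebra.
\item By \cite{WY2026}, $\omega$ is invariant and $A$, $A^*$ are isotropic subalgebras. The restrictions of $\mathcal P$ to $A$ and $A^*$ are $P$ and $S^*$.
\item It remains to check \eqref{eq:quadratic-avg-Zinbiel-compatible}:
\[
\omega(\mathcal P(x+\xi),y+\eta)=\langle S^*\xi,y\rangle-\langle \eta,P x\rangle ,
\qquad
\omega(x+\xi,\mathcal P(y+\eta))=\langle \xi,P y\rangle-\langle S^*\eta,x\rangle .
\]
Equality for all arguments means $\langle \xi,S y\rangle=\langle\xi,Py\rangle$, i.e.\ $S=P$.
\end{itemize}

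This last step is the main obstacle. With the canonical form, $\mathcal P$ is $\omega$-symmetric only when its $A^*$-component is $P^*$, so the correspondence cannot hold literally for arbitrary $S$. I would try two resolutions, in this order.
\begin{enumerate}
\item Read the theorem with $S=P$, which is consistent with Corollary~\ref{cor:coregular-rep-avg-Zinbiel} and Example~\ref{ex:two-dimensional-nontrivial-avg-Zinbiel-bialgebra}.
\item If $S$ must stay general, relax \eqref{eq:quadratic-avg-Zinbiel-compatible} to the condition that $\mathcal P$ has $\omega$-adjoint $\hat{\mathcal P}=S\oplus P^*$. One then checks that this adjoint is again admissible; this is what conditions (d),(e) express.
\end{enumerate}
In the write-up I would commit to the first reading and add a remark on the second.

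\emph{From Manin triples to bialgebras.} Start from a Manin triple $((\mathcal A,\circ_{\mathcal A}),\mathcal P,\omega)$, $((A,\circ_A),P_A)$, $((B,\circ_B),P_B)$.
\begin{itemize}
\item Use nondegeneracy and isotropy to identify $B\cong A^*$ via $u\mapsto \omega(u,\cdot)|_A$.
\item Transport $\circ_B$ to $\circ_{A^*}$, and define $\Delta$ as its dual. By \cite{WY2026}, $(A,\circ,\Delta)$ is a Zinbiel bialgebra and the multiplication on $\mathcal A$ is the double of the coregular matched pair.
\item Compatibility of $\mathcal P$ with $\omega$, restricted to $u\in B$ and $y\in A$, gives $\omega(P_Bu,y)=\omega(u,P_Ay)$. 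Hence $P_B$ corresponds to $P_A^*$, so $S=P_A$ under the identification.
\item Since $\mathcal P=P_A\oplus P_B$ is an averaging operator on the double, Proposition~\ref{pro:double-avg-Zinbiel} shows the data form a matched pair of averaging Zinbiel algebras.
\item Theorem~\ref{thm:avg-Zinbiel-bialgebra-matched-pair} then yields an averaging Zinbiel bialgebra.
\end{itemize}

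Finally, the two constructions are mutually inverse up to isomorphism. This holds because the underlying Zinbiel constructions are inverse by \cite{WY2026}, and in both directions the operator is determined by the restrictions to $A$ and $A^*$.
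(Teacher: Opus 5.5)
Your proposal is correct and follows the paper's proof. In one direction you form $(A\bowtie A^*,P\oplus P^*)$ with the canonical form and check $\omega$-compatibility; conversely you identify $B\cong A^*$ and $P_B$ with $P_A^*$. Your observation that compatibility forces $S=P$ is exactly the restriction the paper makes silently by writing the bialgebra as $(A,\circ,\Delta,P,P^*)$ (compare Corollary~\ref{cor:equivalent-avg-Zinbiel-bialgebra}).

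One small correction concerns the converse direction. The representation compatibilities needed for the matched pair do not follow from Proposition~\ref{pro:double-avg-Zinbiel}, which assumes them. They hold automatically when $S=P$, by Corollary~\ref{cor:coregular-rep-avg-Zinbiel} applied to $((A,\circ),P_A)$ and to $((A^*,\circ_{A^*}),P_A^*)$.
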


\begin{proof}
Let $(A,\circ,\Delta,P,P^*)$ be an averaging Zinbiel bialgebra. Then
$((A,\circ),P)$ is an averaging Zinbiel algebra. From Definition~\ref{def:averaging-Zinbiel-bialgebra}, we know
that $((A^*,-L_\circ^*-R_\circ^*,R_\circ^*),P^*)$
is a representation of $((A,\circ),P)$ and
$((A,-L_{\circ_{A^*}}^*-R_{\circ_{A^*}}^*,R_{\circ_{A^*}}^*),P)$
is a representation of $((A^*,\circ_{A^*}),P^*)$. Then by
Proposition~\ref{pro:double-avg-Zinbiel},
\[
(A\bowtie A^*,P\oplus P^*)
\]
is an averaging Zinbiel algebra. Moreover,
we have
\[
\begin{aligned}
&\omega\bigl((P\oplus P^*)(x+\xi),y+\eta\bigr)
-\omega\bigl(x+\xi,(P\oplus P^*)(y+\eta)\bigr)\\
={}&
\omega(Px+P^*\xi,y+\eta)
-\omega(x+\xi,Py+P^*\eta)\\
={}&
\langle P^*\xi,y\rangle-\langle \eta,Px\rangle
-\langle \xi,Py\rangle+\langle P^*\eta,x\rangle\\
={}&
\langle \xi,Py\rangle-\langle \eta,Px\rangle
-\langle \xi,Py\rangle+\langle \eta,Px\rangle\\
={}&0.
\end{aligned}
\]
is a Manin triple of averaging Zinbiel algebras.

Conversely, let
\[
\bigl(((\mathcal A=A\bowtie B,\circ_{A}),P,\omega),
((A,\circ_A),P_A),((B,\circ_{B}),P_{B})\bigr)
\]
be a Manin triple of averaging Zinbiel algebras. Thus we can get $B$ with $A^*$ by using the nondegenerate invariant skew-symmetric bilinear form $\omega$, and we can
obtain a Zinbiel bialgebra $(A,\circ,\Delta).$
Then we can identify $P_B$ with $P_A^*$ by Eq.~\eqref{eq:quadratic-avg-Zinbiel-compatible}.
Moreover, $((A,\circ),P_A), ((B,\circ_{B}),P_B)$ are averaging Zinbiel algebras, i.e., $(A,\circ,\Delta,P,P^*)$
be averaging Zinbiel bialgebras.

This completes the proof.
\end{proof}

\begin{corollary}\label{cor:equivalent-avg-Zinbiel-bialgebra}
Let $((A,\circ),P)$ and $((A^*,\circ_{A^*}),P^*)$ be averaging Zinbiel
algebras. Then the following statements are equivalent.

\begin{itemize}
\item[\emph{(a)}]
The quintuple $(A,\circ,\Delta,P,P)$ is an averaging Zinbiel bialgebra, where
$\Delta:A\to A\otimes A$ is defined by
\[
\langle \xi\circ_{A^*}\eta,x\rangle
=
\langle \xi\otimes\eta,\Delta(x)\rangle,
\qquad
\forall \xi,\eta\in A^*,\ x\in A.
\]

\item[\emph{(b)}]
The sextuple
\[
\Bigl(
((A,\circ),P),\
((A^*,\circ_{A^*}),P^*),\
-L_\circ^*-R_\circ^*,\ R_\circ^*,\
-L_{\circ_{A^*}}^*-R_{\circ_{A^*}}^*,\ R_{\circ_{A^*}}^*
\Bigr)
\]
is a matched pair of averaging Zinbiel algebras.

\item[\emph{(c)}]
The triple
\[
\Bigl(
((A\bowtie A^*,\circ_{\bowtie}),P\oplus P^*,\omega),\
((A,\circ),P),\
((A^*,\circ_{A^*}),P^*)
\Bigr)
\]
is a Manin triple of averaging Zinbiel algebras, where $A$ and $A^*$ are
isotropic averaging Zinbiel subalgebras of $A\bowtie A^*$, and
\[
\omega(x+\xi,y+\eta)
=
\langle \xi,y\rangle-\langle \eta,x\rangle,
\qquad
\forall x,y\in A,\ \xi,\eta\in A^*.
\]
\end{itemize}
\end{corollary}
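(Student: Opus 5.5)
The plan is to obtain the corollary by combining Theorem~\ref{thm:avg-Zinbiel-bialgebra-matched-pair} (with $S=P$) and Theorem~\ref{thm:manin-triple-avg-Zinbiel-bialgebra}, made explicit on the standard double $A\bowtie A^*$. First, (a)$\Leftrightarrow$(b) is the special case $S=P$ of Theorem~\ref{thm:avg-Zinbiel-bialgebra-matched-pair}. When $S=P$, the $S$-admissibility conditions of Definition~\ref{def:averaging-Zinbiel-bialgebra}(d) coincide with the conditions of Corollary~\ref{cor:S-coregular-avg-Zinbiel}. These hold automatically by Corollary~\ref{cor:coregular-rep-avg-Zinbiel}, since $P$ is averaging. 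Dually, condition (e) with $S^*=P^*$ is the coregular condition for the averaging Zinbiel algebra $((A^*,\circ_{A^*}),P^*)$. Thus both representation requirements in Definition~\ref{def:matched-pair-avg-Zinbiel} are exactly the averaging parts of (a). The remaining equivalence, namely Zinbiel bialgebra versus matched pair of Zinbiel algebras, is the cited result of \cite{WY2026}.

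For (b)$\Rightarrow$(c), I would apply Proposition~\ref{pro:double-avg-Zinbiel} to get the averaging Zinbiel algebra $(A\bowtie A^*,P\oplus P^*)$. Then I check three things:
\begin{itemize}
\item[(i)] $\omega$ is nondegenerate and skew-symmetric, with $A$ and $A^*$ isotropic; this is immediate from the formula for $\omega$.
\item[(ii)] $\omega(\mathcal P X,Y)=\omega(X,\mathcal P Y)$, which is the computation already displayed in the proof of Theorem~\ref{thm:manin-triple-avg-Zinbiel-bialgebra}.
\item[(iii)] The invariance \eqref{eq:quadratic-Zinbiel-invariant} for $\circ_{\bowtie}$.
\end{itemize}
For (iii), I would split $X,Y,Z$ into their $A$ and $A^*$ components and use bilinearity. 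Each mixed term then reduces to the defining pairings of $L^*,R^*$ and of $\Delta$; this is the standard Manin triple of the Zinbiel bialgebra from \cite{WY2026}, which I would quote. The restrictions $(P\oplus P^*)|_A=P$ and $(P\oplus P^*)|_{A^*}=P^*$ hold by construction.

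For (c)$\Rightarrow$(b), I would use the converse half of Theorem~\ref{thm:manin-triple-avg-Zinbiel-bialgebra}. Since the Manin triple is given on $A\bowtie A^*$ with the canonical form, the identification $B\cong A^*$ via $\omega$ is the identity. Invariance of $\omega$ then forces the mixed products of $\circ_{\bowtie}$ to be given by the coregular actions $-L^*-R^*,R^*$ and their duals. This yields the Zinbiel matched pair, again by \cite{WY2026}. By the compatibility \eqref{eq:quadratic-avg-Zinbiel-compatible}, the operator on $A^*$ must be the adjoint of $P$, consistent with the given $P^*$.

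The averaging representation conditions then follow from Proposition~\ref{pro:double-avg-Zinbiel}. Indeed, $P\oplus P^*$ is averaging on the double, and the proof of Proposition~\ref{prop:semi-direct-avg-Zinbiel} shows that comparing components yields exactly the representation identities. The main obstacle is this last step, extracting the compatibility of $P^*$ with the coregular actions from averaging on the double. I expect it to reduce to the component comparison carried out in Proposition~\ref{prop:semi-direct-avg-Zinbiel}, applied to $A\ltimes A^*$ and $A^*\ltimes A$.
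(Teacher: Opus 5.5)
Your proposal is correct and follows the paper's route. The paper also derives the corollary directly from Theorem~\ref{thm:avg-Zinbiel-bialgebra-matched-pair} (with $S=P$) and Theorem~\ref{thm:manin-triple-avg-Zinbiel-bialgebra}; you simply spell out the details on the standard double $A\bowtie A^*$.

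The step you flag as the main obstacle is not actually an obstacle. As you observed for (a)$\Leftrightarrow$(b), when $S=P$ the averaging-representation conditions hold automatically by Corollary~\ref{cor:coregular-rep-avg-Zinbiel}, applied to $((A,\circ),P)$ and to $((A^*,\circ_{A^*}),P^*)$. So (c)$\Rightarrow$(b) needs only the Zinbiel matched-pair equations, which follow from $(A\bowtie A^*,\circ_{\bowtie})$ being a Zinbiel algebra, and no component comparison of the averaging identity on the double is required.
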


\begin{proof}
It follows directly from the matched pair characterization of averaging
Zinbiel bialgebras and Theorem~\ref{thm:manin-triple-avg-Zinbiel-bialgebra}.
\end{proof}

\subsection{Manin triples and induced pre-Lie bialgebras}
Next, we show that an averaging Zinbiel bialgebra naturally induces a pre-Lie bialgebra.

\begin{lemma}\label{lem:prelie-bialgebra-matched-pair}
Let $(A,*)$ be a pre-Lie algebra and suppose that $A^*$ carries a pre-Lie
algebra product $*_{A^*}$. Define a linear map
$\delta:A\to A\otimes A$ by
\[
\langle \xi*_{A^*}\eta,x\rangle
=
\langle \xi\otimes\eta,\delta(x)\rangle,
\qquad
\forall \xi,\eta\in A^*,\ x\in A.
\]
Then $(A,*,\delta)$ is a pre-Lie bialgebra if and only if
$(A,A^*)$ forms the corresponding matched pair of pre-Lie algebras.
\end{lemma}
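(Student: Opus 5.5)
This lemma is the pre-Lie analogue of the matched pair characterization used for Zinbiel bialgebras. I would prove it by recalling Bai's result on left-symmetric (pre-Lie) bialgebras \cite{Bai2008} and making explicit what ``the corresponding matched pair'' means. That matched pair is
\[
\bigl(A,A^*;\ L_*^*,\ L_*^*-R_*^*,\ L_{*_{A^*}}^*,\ L_{*_{A^*}}^*-R_{*_{A^*}}^*\bigr),
\]
where $L_*,R_*$ are the left and right multiplications of $(A,*)$ and $L_{*_{A^*}},R_{*_{A^*}}$ those of $(A^*,*_{A^*})$. Up to the sign conventions for dual maps, this is the standard coregular pair. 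First, I would record that $(A^*;L_*^*,L_*^*-R_*^*)$ is always a representation of $(A,*)$: this is the dual of the regular representation. Symmetrically, $(A;L_{*_{A^*}}^*,L_{*_{A^*}}^*-R_{*_{A^*}}^*)$ is always a representation of $(A^*,*_{A^*})$. So conditions (b) and (c) of a matched pair hold automatically.

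Next, I would translate the remaining matched pair compatibility identities, which mix the two actions, into conditions on $\delta$. Pairing each identity with a test element of $A$ or $A^*$ and using $\langle \xi*_{A^*}\eta,x\rangle=\langle\xi\otimes\eta,\delta(x)\rangle$ converts the expressions involving $L_{*_{A^*}}^*$ and $R_{*_{A^*}}^*$ into components of $\delta(x*y)$ and of $(\mathrm{id}\otimes L_x)\delta(y)$, $(R_y\otimes\mathrm{id})\delta(x)$, and so on. Grouping terms should then yield exactly the two defining compatibility conditions of a pre-Lie bialgebra in \cite{Bai2008}. The first is the condition that $\delta$ be a $1$-cocycle of the sub-adjacent Lie algebra with values in $A\otimes A$. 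The second is the symmetry condition under the flip $\tau$.

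The identities on the $A^*$ side are the duals of those on the $A$ side, so it suffices to verify one half. The other half then follows by exchanging the roles of $A$ and $A^*$ and repeating the same pairing argument.

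The main obstacle is bookkeeping: getting the signs and the left/right placements right when dualizing, so that the identities match Bai's bialgebra axioms term by term. I would handle this by working with a single test pairing $\langle \cdot,\xi\otimes\eta\rangle$ for each identity. In practice, the cleanest route is to cite the equivalence theorem of \cite{Bai2008} (pre-Lie bialgebras $\Leftrightarrow$ matched pairs $\Leftrightarrow$ Manin triples with the symmetric bilinear form) and note that our definitions agree with it. That reduces the lemma to this citation, with the computation above serving as the verification.
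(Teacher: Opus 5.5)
The paper states this lemma without proof, as a known consequence of Bai's theory of left-symmetric bialgebras. Reducing it to the equivalence theorem of \cite{Bai2008} is therefore the right strategy. What goes wrong is your explicit description of ``the corresponding matched pair''.

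Use the paper's dual convention $\langle \rho^*(x)\xi,v\rangle=-\langle \xi,\rho(x)v\rangle$. A pair $(l,r)$ is a representation of $(A,*)$, i.e.\ $(x+u)*(y+v)=x*y+l(x)v+r(y)u$ is pre-Lie on $A\oplus V$, if and only if $l([x,y])=[l(x),l(y)]$ and $l(x)r(y)-r(y)l(x)=r(x*y)-r(y)r(x)$, where $[x,y]=x*y-y*x$. Its dual is $(l^*-r^*,-r^*)$. So the coregular representation is $(A^*;L_*^*-R_*^*,-R_*^*)$, and the matched pair in Bai's theorem is $(A,A^*;\,L_*^*-R_*^*,\,-R_*^*,\,L_{*_{A^*}}^*-R_{*_{A^*}}^*,\,-R_{*_{A^*}}^*)$.

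Your $(L_*^*,L_*^*-R_*^*)$ is not a representation under either sign convention. With the minus convention, the second axiom translates to $[\mathrm{ad}_y,L_x]=-\mathrm{ad}_{x*y}-\mathrm{ad}_x\mathrm{ad}_y$ on $A$, where $\mathrm{ad}_x=L_x-R_x$. Take the associative (hence pre-Lie) algebra spanned by the matrix units $E_{11},E_{12}$, with $x=E_{11}$, $y=E_{12}$, and evaluate at $E_{11}$: the left side is $0$ and the right side is $2E_{12}$. With plain transposes, $x\mapsto L_x^{T}$ is an anti-representation of the sub-adjacent Lie algebra, so the first axiom fails as soon as some $L_{[x,y]}\neq 0$.

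Hence your claim that (b) and (c) ``hold automatically'' is false for the data you wrote. The mixed identities you would dualize are then not the ones that correspond to the bialgebra axioms. You appear to have conflated two different actions. One is the Lie action $L_*^*$ of $\mathfrak{g}(A)$ on $A^*$; this is the sub-adjacent action $l-r$ of the correct pair, and it is what appears in Bai's Lie matched pair $(\mathfrak{g}(A),\mathfrak{g}(A^*),L_*^*,L_{*_{A^*}}^*)$. The other is the pre-Lie left action, which is $L_*^*-R_*^*$.

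A smaller point concerns how you describe the axioms. In \cite{Bai2008} a left-symmetric bialgebra is defined by two mirror-image $1$-cocycle conditions. The first is $\delta([x,y])=(L_x\otimes\mathrm{id}+\mathrm{id}\otimes\mathrm{ad}_x)\delta(y)-(L_y\otimes\mathrm{id}+\mathrm{id}\otimes\mathrm{ad}_y)\delta(x)$. The second is the analogous condition for the coproduct on $A^*$ dual to $*$. That symmetric form is what justifies your ``exchange $A$ and $A^*$'' shortcut. If you instead use a formulation in terms of $\delta$ alone (a cocycle condition plus a flip-symmetry condition), the second axiom is not the mirror image of the first, and you need a separate argument that the two formulations agree.

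Once the matched pair is corrected to $(L^*-R^*,-R^*)$ on both sides, your proposal reduces to exactly the citation the paper relies on.
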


\begin{theorem}\label{thm:avg-Zinbiel-bialgebra-induces-prelie-bialgebra}
Let $(A,\circ,\Delta,P,S)$ be an averaging Zinbiel bialgebra. Define a
bilinear operation $*_P$ on $A$ by
\[
x*_P y=P(x)\circ y-x\circ P(y),
\qquad
\forall x,y\in A,
\]
and define a linear map $\delta_S:A\to A\otimes A$ by
\[
\delta_S(x)=(S\otimes\operatorname{id}-\operatorname{id}\otimes S)\Delta(x),
\qquad
\forall x\in A.
\]
Then $(A,*_P,\delta_S)$ is a pre-Lie bialgebra.
\end{theorem}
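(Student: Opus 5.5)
The plan is to route the statement through matched pairs, combining Theorem~\ref{thm:avg-Zinbiel-bialgebra-matched-pair}, Theorem~\ref{thm:induced-matched-pair-prelie-from-avg-Zinbiel} and Lemma~\ref{lem:prelie-bialgebra-matched-pair}.

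\textbf{Step 1 (matched pair of averaging Zinbiel algebras).} By Theorem~\ref{thm:avg-Zinbiel-bialgebra-matched-pair}, the hypothesis gives that
\[
\Bigl(((A,\circ),P),\ ((A^*,\circ_{A^*}),S^*),\ -L_\circ^*-R_\circ^*,\ R_\circ^*,\ -L_{\circ_{A^*}}^*-R_{\circ_{A^*}}^*,\ R_{\circ_{A^*}}^*\Bigr)
\]
is a matched pair of averaging Zinbiel algebras.

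\textbf{Step 2 (induced matched pair of pre-Lie algebras).} Theorem~\ref{thm:induced-matched-pair-prelie-from-avg-Zinbiel} then yields the matched pair of pre-Lie algebras $(A_P, (A^*)_{S^*}; (\bar\rho_A,\bar\mu_A),(\bar\rho_{A^*},\bar\mu_{A^*}))$. Its double is $(A\bowtie A^*)_{P\oplus S^*}$.

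\textbf{Step 3 (identify the dual product).} I will show that the pre-Lie product on $A^*$ induced by $S^*$ is dual to $\delta_S$. For $\xi,\eta\in A^*$ and $x\in A$,
\[
\langle S^*(\xi)\circ_{A^*}\eta-\xi\circ_{A^*}S^*(\eta),x\rangle=\langle (S\otimes\operatorname{id}-\operatorname{id}\otimes S)\Delta(x),\xi\otimes\eta\rangle=\langle \xi\otimes\eta,\delta_S(x)\rangle .
\]
This uses only the definition of $S^*$ and of $\circ_{A^*}$.

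\textbf{Step 4 (the actions are the coregular ones).} This is the main obstacle. Lemma~\ref{lem:prelie-bialgebra-matched-pair} requires the matched pair of the \emph{corresponding} shape, i.e.\ with the pre-Lie coregular actions $(L_{*_P}^*-R_{*_P}^*,\,-R_{*_P}^*)$ of $(A,*_P)$ on $A^*$, and analogously for $*_{S^*}$ acting on $A$. The induced actions $\bar\rho_A,\bar\mu_A$ are built from $\rho_A=-L^*-R^*$ and $\mu_A=R^*$ twisted by $P$ and $S^*$. I expect to prove the identifications
\[
\bar\rho_A(x)+\bar\mu_A(x)=L_{*_P}^*(x)
\qquad\text{and}\qquad
\bar\mu_A(x)=-R_{*_P}^*(x)
\]
up to the convention for $L^*$, $R^*$ used in the pre-Lie matched pair (I would write everything with the sum-form of the double, so that a pair of actions $(l,r)$ on $A^*$ corresponds to $x\cdot\xi+\xi\cdot x$).

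Concretely, I pair $\bar\mu_A(x)\xi=\mu_A(x)S^*\xi-\mu_A(Px)\xi$ with $y$. This gives $-\langle\xi, S(y\circ x)\rangle+\langle\xi,y\circ Px\rangle$. The second averaging condition in the $S$-admissibility of Definition~\ref{def:averaging-Zinbiel-bialgebra} has the form $S(x)\circ P(y)=S(S(x)\circ y)$, with $P(x)\circ S(y)=S(P(x)\circ y)$. Using these, the term $S(y\circ x)$ should not survive alone. Instead, I will use that the double $(A\bowtie A^*)_{P\oplus S^*}$ carries the bilinear form $\omega$, and check that $\omega$ is invariant in the pre-Lie sense:
\[
\omega(X*Y,Z)=-\omega(Y,X*Z-Z*X)
\qquad\text{or the appropriate variant.}
\]
This invariance uses the Zinbiel invariance \eqref{eq:quadratic-Zinbiel-invariant} together with $\omega((P\oplus S^*)X,Y)=\omega(X,(P\oplus S)Y)$-type identities, which come from the $S$-admissibility and $P^*$-admissibility. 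Invariance of $\omega$ together with isotropy of $A$ and $A^*$ forces the actions to be coregular, by the standard Manin-triple argument for pre-Lie algebras.

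\textbf{Step 5 (conclusion).} With the actions identified, Lemma~\ref{lem:prelie-bialgebra-matched-pair} applied to $(A,*_P)$ and $(A^*,*_{S^*})$, with comultiplication $\delta_S$ from Step 3, gives that $(A,*_P,\delta_S)$ is a pre-Lie bialgebra.

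The delicate point is the sign and twist bookkeeping in Step 4. This is where the admissibility conditions (d) and (e) of Definition~\ref{def:averaging-Zinbiel-bialgebra} enter essentially, beyond the mere matched-pair property.
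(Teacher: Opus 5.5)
\textbf{The proposal has a genuine gap at Step~4.} Your Steps 1--3 and 5 coincide with the paper's proof. The paper passes from the matched pair of Theorem~\ref{thm:induced-matched-pair-prelie-from-avg-Zinbiel} straight to Lemma~\ref{lem:prelie-bialgebra-matched-pair} without checking the actions, so you have correctly located the point that needs an argument. But Step~4 is false: the induced actions are not the coregular ones.

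With $\rho_A=-L^*-R^*$, $\mu_A=R^*$, $P_B=S^*$ and $x\cdot y=x\circ y+y\circ x$, the definitions give
\[
\langle \bar\rho_A(x)\xi,y\rangle=\langle \xi,\,P(x)\cdot y-S(x\cdot y)\rangle,\qquad
\langle \bar\mu_A(x)\xi,y\rangle=\langle \xi,\,y\circ P(x)-S(y\circ x)\rangle .
\]
The coregular left action of $(A,*_P)$ is instead
\[
\langle \mathrm{ad}^*_{*_P}(x)\xi,y\rangle=-\langle\xi,[x,y]_{*_P}\rangle=-\langle \xi,P(x)\cdot y-x\cdot P(y)\rangle .
\]
So $\bar\rho_A=\mathrm{ad}^*_{*_P}$ is equivalent to $2P(x)\cdot y-x\cdot P(y)=S(x\cdot y)$. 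Symmetrizing in $x,y$, this says $P(x)\cdot y=x\cdot P(y)=S(x\cdot y)$, which even forces $[x,y]_{*_P}=0$. None of (a)--(e) implies this.

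Your invariance route is the same statement in another form. Put $\mathcal P=P\oplus S^*$ and let $\mathcal P^\dagger=S\oplus P^*$ be its $\omega$-transpose. The identity $\omega(\mathcal PX,Y)=\omega(X,\mathcal P^\dagger Y)$ is pure linear algebra; the admissibility conditions play no role in it. Zinbiel invariance of $\omega$ then gives
\[
\omega(X*Y,Z)+\omega(Y,X*Z-Z*X)=\omega\bigl(Y,\;2\,\mathcal PX\cdot Z-X\cdot\mathcal PZ-\mathcal P^\dagger(X\cdot Z)\bigr),
\]
and the right-hand side has no reason to vanish.

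\textbf{Counterexample to the identification.} Take $A=\mathrm{span}\{e_1,e_2\}$ with $e_1\circ e_1=e_2$, $\Delta=0$, and $S=P$ given by $P(e_1)=0$, $P(e_2)=e_2$.
\begin{itemize}
\item Conditions (a)--(e) all hold: (d) reduces to the averaging identity, and the rest is trivial because $\Delta=0$.
\item $*_P=0$, so the coregular actions vanish in every sign convention.
\item Yet $\bar\rho_A(e_1)e_2^*=-2e_1^*$ and $\bar\mu_A(e_1)e_2^*=-e_1^*$.
\item In $(A\bowtie A^*)_{P\oplus P^*}$ one gets $\omega(e_1*e_2^*,e_1)=-2$, whereas pre-Lie invariance would require $0$.
\end{itemize}
So the admissibility conditions do not rescue Step~4, and Lemma~\ref{lem:prelie-bialgebra-matched-pair} cannot be applied to the induced matched pair.

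In this example $\delta_S=0$, so the theorem's conclusion holds trivially; what breaks is the route, not necessarily the statement. Completing the proof needs a genuinely different ingredient, for instance a direct verification of the pre-Lie bialgebra compatibility conditions for $*_P$ and $\delta_S$ from (a)--(e). Neither your proposal nor the paper's argument provides this.
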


\begin{proof}
Since $(A,\circ,\Delta,P,S)$ is an averaging Zinbiel bialgebra,
$((A,\circ),P)$ is an averaging Zinbiel algebra and
$((A^*,\circ_{A^*}),S^*)$ is an averaging Zinbiel algebra, where
\[
\langle \xi\circ_{A^*}\eta,x\rangle
=
\langle \xi\otimes\eta,\Delta(x)\rangle,
\qquad
\forall \xi,\eta\in A^*,\ x\in A.
\]

By Proposition~\ref{prop:induced-prelie-from-avg-Zinbiel}, the product
\[
x*_P y=P(x)\circ y-x\circ P(y)
\]
makes $A$ into a pre-Lie algebra. Similarly, the product
\[
\xi*_{S^*}\eta
=
S^*(\xi)\circ_{A^*}\eta-\xi\circ_{A^*}S^*(\eta),
\qquad
\forall \xi,\eta\in A^*,
\]
makes $A^*$ into a pre-Lie algebra.

By the matched pair characterization of averaging Zinbiel bialgebras,
\[
((A,\circ),P)
\quad\text{and}\quad
((A^*,\circ_{A^*}),S^*)
\]
form a matched pair of averaging Zinbiel algebras. Applying
Theorem~\ref{thm:induced-matched-pair-prelie-from-avg-Zinbiel}, we obtain a
matched pair of pre-Lie algebras
\[
(A,*_P)
\quad\text{and}\quad
(A^*,*_{S^*}).
\]
Hence, by Lemma~\ref{lem:prelie-bialgebra-matched-pair}, this matched pair
induces a pre-Lie bialgebra structure on $(A,*_P)$.

It remains to identify the corresponding comultiplication. For all
$\xi,\eta\in A^*$ and $x\in A$, we have
\[
\begin{aligned}
\langle \xi*_{S^*}\eta,x\rangle
&=
\langle S^*(\xi)\circ_{A^*}\eta,x\rangle
-
\langle \xi\circ_{A^*}S^*(\eta),x\rangle\\
&=
\langle S^*(\xi)\otimes\eta,\Delta(x)\rangle
-
\langle \xi\otimes S^*(\eta),\Delta(x)\rangle\\
&=
\langle \xi\otimes\eta,
(S\otimes\operatorname{id})\Delta(x)
-
(\operatorname{id}\otimes S)\Delta(x)
\rangle\\
&=
\langle \xi\otimes\eta,\delta_S(x)\rangle.
\end{aligned}
\]
Thus the comultiplication dual to $*_{S^*}$ is exactly
\[
\delta_S=(S\otimes\operatorname{id}-\operatorname{id}\otimes S)\Delta.
\]
Therefore,
\[
(A,*_P,\delta_S)
\]
is a pre-Lie bialgebra.
\end{proof}

\subsection{Averaging operators on quadratic Rota-Baxter Zinbiel algebras}

We first recall some facts on coboundary Zinbiel bialgebras and quadratic
Rota-Baxter Zinbiel algebras from \cite{WY2026}. Let \((A,\circ)\) be a
Zinbiel algebra. A Zinbiel bialgebra \((A,\circ,\Delta)\) is called
coboundary if there exists \(r\in A\otimes A\) such that
\begin{eqnarray}\label{eq:coboundary-Zinbiel}
\Delta_r(x)
=
\bigl(\operatorname{id}\otimes(L_x+R_x)-L_x\otimes\operatorname{id}\bigr)r,
\qquad \forall x\in A.
\end{eqnarray}
For \(r=\sum_i a_i\otimes b_i\in A\otimes A\), we use the usual leg notation
\[
r_{12}=\sum_i a_i\otimes b_i\otimes 1,\quad
r_{13}=\sum_i a_i\otimes 1\otimes b_i,\quad
r_{23}=\sum_i 1\otimes a_i\otimes b_i,
\]
and
\[
r_{21}=\sum_i b_i\otimes a_i\otimes 1.
\]
Here the symbols \(r_{12},r_{13},r_{23}\) are understood in the standard
leg notation. Let
\[
x\cdot y=x\circ y+y\circ x,\qquad \forall x,y\in A,
\]
be the subadjacent commutative associative product of \((A,\circ)\).
Define
\[
H(x)=L_x\otimes\operatorname{id}\otimes\operatorname{id}
-\operatorname{id}\otimes\operatorname{id}\otimes(L_x+R_x),
\]
and $\llbracket r,r\rrbracket
=
-r_{13}\circ r_{12}-r_{23}\circ r_{21}
+r_{13}\cdot r_{21}+r_{12}\cdot r_{23}-r_{13}\cdot r_{23}.$

\begin{theorem}[\cite{WY2026}]\label{thm:coboundary-Zinbiel}
Let $(A,\circ)$ be a Zinbiel algebra and let $r\in A\otimes A$. Write $r=a+\Lambda$, where $a\in\wedge^2 A$ and $\Lambda\in S^2(A)$. Then
\(\Delta_r\) gives a Zinbiel bialgebra structure on \(A\) if and only if,
for all \(x,y\in A\),
\begin{eqnarray}\label{eq:coboundary-Zinbiel-condition-one}
\bigl(
L_{x\circ y}\otimes\operatorname{id}
-\operatorname{id}\otimes L_{x\circ y}
-L_xL_y\otimes\operatorname{id}
+L_x\otimes L_y
\bigr)(a)=0,
\end{eqnarray}
and
\begin{eqnarray}\label{eq:coboundary-Zinbiel-condition-two}
H(x)\llbracket r,r\rrbracket=0.
\end{eqnarray}
In particular, the equation $\llbracket r,r\rrbracket=0$ is called the Zinbiel Yang-Baxter equation.
\end{theorem}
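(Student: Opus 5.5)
The plan is to verify directly the Zinbiel bialgebra compatibility conditions for the coboundary comultiplication $\Delta_r$ of Eq.~\eqref{eq:coboundary-Zinbiel}. For this I will use the formulation of a Zinbiel bialgebra from \cite{WY2026}: $\Delta$ is a Zinbiel coalgebra structure, and $\Delta$ satisfies the two compatibility identities relating $\Delta(x\circ y)$ to $L_x$, $R_y$ acting on $\Delta(x)$, $\Delta(y)$. Equivalently, these are the matched pair conditions of Definition~\ref{def:matched-pair-Zinbiel} for the coregular actions. The proof then splits into two independent computations.

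First, the compatibility conditions. Writing $r=\sum_i a_i\otimes b_i$, I will substitute $\Delta_r(x)=\sum_i\bigl(a_i\otimes(x\circ b_i+b_i\circ x)-x\circ a_i\otimes b_i\bigr)$ into each identity. I will then simplify with the Zinbiel identity $x\circ(y\circ z)=(x\cdot y)\circ z$ and the commutativity and associativity of $\cdot$. The terms involving the symmetric part $\Lambda$ should cancel pairwise, since symmetry lets one swap legs. What remains should be a linear operator in $x,y$ applied to $a$. After collecting terms, I expect exactly the operator
\[
L_{x\circ y}\otimes\operatorname{id}-\operatorname{id}\otimes L_{x\circ y}-L_xL_y\otimes\operatorname{id}+L_x\otimes L_y
\]
of Eq.~\eqref{eq:coboundary-Zinbiel-condition-one}, possibly together with its flip $\tau$. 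Skew-symmetry of $a$ should show that the flipped version is equivalent to the unflipped one. This step gives the equivalence of the compatibility with Eq.~\eqref{eq:coboundary-Zinbiel-condition-one}.

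Second, the coassociativity-type condition. Here I will compute the co-Zinbiel defect
\[
(\operatorname{id}\otimes\Delta_r)\Delta_r(x)-(\Delta_r\otimes\operatorname{id})\Delta_r(x)-(\tau\otimes\operatorname{id})(\Delta_r\otimes\operatorname{id})\Delta_r(x)
\]
as a sum of terms of the form $(\text{operators in }x)\,(r_{\bullet\bullet}\circ r_{\bullet\bullet})$. The aim is to rewrite every term, using the Zinbiel identity in each tensor leg, as either $H(x)$ applied to one of the five summands of $\llbracket r,r\rrbracket$, or a remainder. The remainder should vanish identically, or vanish because of Eq.~\eqref{eq:coboundary-Zinbiel-condition-one} already established. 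The result would be that the defect equals $H(x)\llbracket r,r\rrbracket$ modulo such terms, which gives the stated criterion.

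The main obstacle is this second step. The bookkeeping of the twelve or so triple-tensor terms is heavy. The delicate point is that the mixed terms coming from $a$ and $\Lambda$ must be absorbed precisely by Eq.~\eqref{eq:coboundary-Zinbiel-condition-one}. My first attempt will be to group terms by which legs carry $x$, and match each group against a specific summand such as $r_{13}\cdot r_{21}$. For the sign conventions I will follow the analogous computations for coboundary dendriform and pre-Lie bialgebras \cite{Bai2008}.
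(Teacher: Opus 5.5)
Your plan is correct. Carried out, it goes as follows:
\begin{itemize}
\item The compatibility identity governing $\Delta(x\circ y+y\circ x)$ holds for every $r$, because $L_xL_y=L_{x\cdot y}$ and $(A,\cdot)$ is commutative associative.
\item The other compatibility identity reduces to the operator of Eq.~\eqref{eq:coboundary-Zinbiel-condition-one}, with $x$ and $y$ interchanged, applied to $r-\tau(r)=2a$. So the $\Lambda$-terms cancel and no flip argument is needed.
\item The co-Zinbiel defect equals $H(x)\llbracket r,r\rrbracket$ plus a term linear in $a$. That term is exactly Eq.~\eqref{eq:coboundary-Zinbiel-condition-one} with one argument replaced by a leg of $r$, so it vanishes once Eq.~\eqref{eq:coboundary-Zinbiel-condition-one} holds.
\end{itemize}
The paper gives no proof of its own here and only quotes the result from \cite{WY2026}; your direct expansion is the standard way to establish such a coboundary criterion.
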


Recall that a linear map \(B:A\to A\) is called a Rota-Baxter operator of
weight \(\lambda\) on \((A,\circ)\) if
\[
B(x)\circ B(y)
=
B\bigl(B(x)\circ y+x\circ B(y)+\lambda x\circ y\bigr),
\qquad \forall x,y\in A.
\]
In this case, \((A,\circ,B)\) is called a Rota-Baxter Zinbiel algebra of
weight \(\lambda\). The descendent product associated with \(B\) is given by
\[
x\circ_B y
=
B(x)\circ y+x\circ B(y)+\lambda x\circ y,
\qquad \forall x,y\in A.
\]
Then \((A,\circ_B)\) is a Zinbiel algebra, called the descendent Zinbiel
algebra and denoted by \(A_B\).

\begin{proposition}\label{prop:averaging-descendent-Zinbiel}
Let \((A,\circ,B)\) be a Rota-Baxter Zinbiel algebra of weight \(\lambda\),
and let \(P:A\to A\) be an averaging operator on \((A,\circ)\). If
\[
PB=BP,
\]
then \(P\) is also an averaging operator on the descendent Zinbiel algebra
\((A,\circ_B)\).
\end{proposition}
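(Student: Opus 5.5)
We must show, for all $x,y\in A$,
\[
P(x)\circ_B P(y)=P\bigl(P(x)\circ_B y\bigr)=P\bigl(x\circ_B P(y)\bigr).
\]
The plan is to expand each side with the definition of the descendent product. Then we use $PB=BP$ to move $P$ past $B$, so that every term has the shape $P(u)\circ P(v)$, $P(P(u)\circ v)$ or $P(u\circ P(v))$. At that point the averaging identity for $P$ on $(A,\circ)$ applies term by term.

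Concretely, the left-hand side is
\[
P(x)\circ_B P(y)=B(P(x))\circ P(y)+P(x)\circ B(P(y))+\lambda P(x)\circ P(y).
\]
Using $BP=PB$, this equals
\[
P(B(x))\circ P(y)+P(x)\circ P(B(y))+\lambda P(x)\circ P(y).
\]

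For the middle expression we write
\[
P\bigl(P(x)\circ_B y\bigr)=P\bigl(B(P(x))\circ y\bigr)+P\bigl(P(x)\circ B(y)\bigr)+\lambda P\bigl(P(x)\circ y\bigr).
\]
Rewriting $B(P(x))=P(B(x))$ puts each summand in the form $P(P(u)\circ v)$. The three summands have $(u,v)=(B(x),y)$, $(x,B(y))$ and $(x,y)$. By the averaging identity $P(P(u)\circ v)=P(u)\circ P(v)$, they become the three terms of the left-hand side.

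The right-hand expression is handled in the same way. We rewrite $B(P(y))=P(B(y))$ and apply $P(u\circ P(v))=P(u)\circ P(v)$ with $(u,v)=(B(x),y)$, $(x,B(y))$ and $(x,y)$.

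We never use the Rota-Baxter identity itself. It is needed only so that $(A,\circ_B)$ is a Zinbiel algebra, which is already recorded before the statement. The only point requiring care is to apply the commutation $PB=BP$ in the correct direction in each term. The main obstacle is therefore just bookkeeping, since no genuine difficulty arises.
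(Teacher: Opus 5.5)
Your proposal is correct and follows essentially the same route as the paper: expand $\circ_B$, use $PB=BP$ to write $B(P(x))=P(B(x))$ and $B(P(y))=P(B(y))$, then apply the averaging identity of $P$ on $(A,\circ)$ term by term. Your explicit treatment of the third expression $P\bigl(x\circ_B P(y)\bigr)$ fills in the step the paper dismisses with ``similarly.''
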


\begin{proof}
For all \(x,y\in A\), using \(PB=BP\), we have
\[
\begin{aligned}
P(x)\circ_B P(y)
&=
B(Px)\circ P(y)+P(x)\circ B(P y)+\lambda P(x)\circ P(y)\\
&=
P(Bx)\circ P(y)+P(x)\circ P(By)+\lambda P(x)\circ P(y).
\end{aligned}
\]
On the other hand,
\[
\begin{aligned}
P\bigl(P(x)\circ_B y\bigr)
&=
P\bigl(B(Px)\circ y+P(x)\circ B(y)+\lambda P(x)\circ y\bigr)\\
&=
P\bigl(P(Bx)\circ y\bigr)
+P\bigl(P(x)\circ B(y)\bigr)
+\lambda P\bigl(P(x)\circ y\bigr)\\
&=
P(Bx)\circ P(y)+P(x)\circ P(By)+\lambda P(x)\circ P(y).
\end{aligned}
\]
Thus
\[
P(x)\circ_B P(y)=P\bigl(P(x)\circ_B y\bigr).
\]
Similarly,
\[
P(x)\circ_B P(y)=P\bigl(x\circ_B P(y)\bigr).
\]
Hence \(P\) is an averaging operator on \((A,\circ_B)\).
\end{proof}

Now let \((A,\circ,\omega)\) be a quadratic Zinbiel algebra. A quadruple
\((A,\circ,B,\omega)\) is called a quadratic Rota-Baxter Zinbiel algebra of
weight \(\lambda\) if \((A,\circ,B)\) is a Rota-Baxter Zinbiel algebra of
weight \(\lambda\) and
\[
\omega(Bx,y)+\omega(x,By)+\lambda\omega(x,y)=0,
\qquad \forall x,y\in A.
\]

\begin{theorem}[\cite{WY2026}]\label{thm:quadratic-rb-Zinbiel-to-r}
Let \((A,\circ,B,\omega)\) be a quadratic Rota-Baxter Zinbiel algebra of
weight \(\lambda\) with \(\lambda\ne0\). Let $I_\omega:A^*\longrightarrow A$
be the induced linear isomorphism defined by
\[
\langle I_\omega^{-1}x,y\rangle=\omega(x,y),
\qquad \forall x,y\in A.
\]
Define \(r^{B,\omega}\in A\otimes A\) by
\[
r^{B,\omega}_+
=
\frac{1}{\lambda}(B+\lambda\operatorname{id})\circ I_\omega,
\qquad
r^{B,\omega}_-
=
\frac{1}{\lambda}B\circ I_\omega.
\]
Then \(r^{B,\omega}\) satisfies the Zinbiel Yang-Baxter equation
\[
\llbracket r^{B,\omega},r^{B,\omega}\rrbracket=0.
\]
Consequently, \(r^{B,\omega}\) gives rise to a factorizable Zinbiel
bialgebra \((A,A^*_{r^{B,\omega}})\).
\end{theorem}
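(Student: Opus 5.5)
The aim is to show that the tensor $r=r^{B,\omega}$ solves $\llbracket r,r\rrbracket=0$. The plan is to convert the Yang--Baxter equation into an operator identity and to prove that identity from the Rota--Baxter relation and the invariance of $\omega$. Since $r_+-r_-=\frac1\lambda\bigl((B+\lambda\id)-B\bigr)I_\omega=I_\omega$, the skew part $a$ of $r$ is governed by $I_\omega$. As $\omega$ is skew-symmetric, $I_\omega$ corresponds to a skew tensor, so $\frac12(r_++r_-)$ splits off and $r$ sits in the standard factorizable position. I will first record the elementary identities this gives:
\[
r_+ = r_- + I_\omega,\qquad r_-^*=-r_+ ,
\]
where the second uses the weighted compatibility $\omega(Bx,y)+\omega(x,By)+\lambda\omega(x,y)=0$. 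Here $r_\pm$ is read as the map $A^*\to A$, with $\langle r_+(\xi),\eta\rangle=\langle \xi\otimes\eta,r\rangle$.

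Next I would express $\llbracket r,r\rrbracket$ as a trilinear form. For $\xi,\eta,\zeta\in A^*$, pair each of the five terms $r_{13}\circ r_{12}$, $r_{23}\circ r_{21}$, $r_{13}\cdot r_{21}$, $r_{12}\cdot r_{23}$, $r_{13}\cdot r_{23}$ against $\xi\otimes\eta\otimes\zeta$. Each term becomes $\langle \cdot, r_\pm(\cdot)\circ r_\pm(\cdot)\rangle$ or a coadjoint expression of the form $\langle \xi, r_+(L^*_{r(\eta)}\zeta)\rangle$. Substituting $\xi=I_\omega^{-1}x$ and so on, and using the invariance \eqref{eq:quadratic-Zinbiel-invariant} to move products across $\omega$, every coadjoint action turns into an ordinary product in $A$. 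The whole expression should collapse to
\[
\omega\Bigl(\tfrac1{\lambda^2}\bigl(B(x)\circ B(y)-B(B(x)\circ y+x\circ B(y)+\lambda x\circ y)\bigr),z\Bigr)
\]
plus terms of the same type with $x,y,z$ permuted, possibly with $B$ replaced by $B+\lambda\id$. The latter is again a Rota--Baxter operator of weight $-\lambda$ up to sign conventions, so these terms also vanish. Since $\omega$ is nondegenerate and the Rota--Baxter identity holds, each bracket is zero, which gives $\llbracket r,r\rrbracket=0$.

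The main obstacle is the bookkeeping in this middle step: matching the five terms with the correct legs and making sure that $r_{21}$ contributes $r_-$ rather than $r_+$. I would organise the check by first proving the compact equivalent form: $r$ satisfies the Zinbiel Yang--Baxter equation if and only if
\[
r_+(\xi)\circ r_+(\eta)=r_+\bigl(\mathrm{ad}^*_{r_+(\xi)}\eta-\mathrm{ad}^*_{r_-(\eta)}\xi\bigr)
\]
for suitable coregular actions, in analogy with the Lie case. This is just a rewriting of the five terms. I would then plug in $r_+=\frac1\lambda(B+\lambda\id)I_\omega$ and $r_-=\frac1\lambda BI_\omega$ and verify it directly.

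For the final claim, note that $\llbracket r,r\rrbracket=0$ gives \eqref{eq:coboundary-Zinbiel-condition-two} trivially. Condition \eqref{eq:coboundary-Zinbiel-condition-one} on the skew part $a$, which corresponds to $I_\omega$, follows from the invariance of $\omega$ alone. Theorem~\ref{thm:coboundary-Zinbiel} then yields a Zinbiel bialgebra. It is factorizable because $r_+-r_-=I_\omega$ is invertible.
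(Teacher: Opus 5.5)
The paper gives no proof of this statement; it is quoted from \cite{WY2026}, so your outline has to stand on its own. The strategy is the right one and it does close. As written, however, it has a sign error exactly at the point you flag as the main obstacle. With your convention $\langle r_+(\xi),\eta\rangle=\langle\xi\otimes\eta,r\rangle$, the map attached to $\sigma(r)$ is the transpose of $r_+$. Here, for $T:A^*\to A$, let $T^*$ denote the transpose, $\langle \eta,T^*\xi\rangle=\langle\xi,T\eta\rangle$. One needs $r_-=r_+^*$, i.e.\ $r_-^*=r_+$, not $r_-^*=-r_+$. Indeed $I_\omega^*=-I_\omega$, and the weighted compatibility reads $B^*I_\omega^{-1}=-I_\omega^{-1}(B+\lambda\operatorname{id})$. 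Hence $(BI_\omega)^*=-I_\omega B^*=(B+\lambda\operatorname{id})I_\omega$. Your relation $r_-^*=-r_+$ is the Lie-type convention, where the form is symmetric. Here it contradicts your own $r_+-r_-=I_\omega$, since it would force $I_\omega^*=I_\omega$. Carried into the pairing, it would attach $-r_-$ to $r_{21}$ and spoil the signs of the final identity. Once $r_-=r_+^*$, the $r_{21}$ bookkeeping is automatic.

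Second, your ``compact equivalent form'' is not an equivalence. Pair the five terms with $\xi\otimes\eta\otimes\zeta$ and move each product across the pairing, using $r_-^*=r_+$. This gives
\[
\begin{aligned}
&\langle \llbracket r,r\rrbracket,\xi\otimes\eta\otimes\zeta\rangle=-\bigl\langle \zeta, D(\xi,\eta)+D(\eta,\xi)\bigr\rangle,\\
&D(\xi,\eta):=r_+(\xi)\circ r_+(\eta)-r_+\bigl((-L^*_{r_+(\xi)}-R^*_{r_+(\xi)})\eta+R^*_{r_-(\eta)}\xi\bigr).
\end{aligned}
\]
So the Zinbiel Yang--Baxter equation only sees the symmetrization of $D$. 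The condition $D=0$ is sufficient, which is the direction you need, but it is not equivalent.

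The step you leave as ``should collapse to'' is then short. Put $\xi=\omega^\sharp(x)$ and $\eta=\omega^\sharp(y)$ with $\omega^\sharp=I_\omega^{-1}$. Invariance gives $(-L^*_u-R^*_u)\omega^\sharp(y)=\omega^\sharp(u\circ y)$ and $R^*_v\omega^\sharp(x)=\omega^\sharp(x\circ v)$, as in the proof of Proposition~\ref{pro:quadratic-avg-Zinbiel-isomorphism}. Therefore
\[
\begin{aligned}
\lambda^2D(\xi,\eta)&=(B+\lambda\operatorname{id})x\circ(B+\lambda\operatorname{id})y-(B+\lambda\operatorname{id})\bigl((B+\lambda\operatorname{id})x\circ y+x\circ B(y)\bigr)\\
&=B(x)\circ B(y)-B\bigl(B(x)\circ y+x\circ B(y)+\lambda x\circ y\bigr)=0.
\end{aligned}
\]
A single Rota--Baxter defect appears, symmetrized in $x,y$; no other permuted terms arise. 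Your last paragraph is fine: condition~\eqref{eq:coboundary-Zinbiel-condition-one} for the skew part $\frac12 I_\omega$ does follow from the invariance of $\omega$ together with the Zinbiel identity.
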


Now we introduce averaging operators on quadratic Rota-Baxter Zinbiel
algebras.

\begin{definition}\label{def:averaging-on-quadratic-rb-Zinbiel}
Let $(A,\circ,B,\omega)$ be a quadratic Rota-Baxter Zinbiel algebra of
weight $\lambda$. A linear map $P:A\to A$ is called an {\bf averaging operator on
$(A,\circ,B,\omega)$} if the following conditions hold:
\begin{itemize}
\item[\emph{(i)}] $((A,\circ),P,\omega)$ is a quadratic averaging Zinbiel
algebra;

\item[\emph{(ii)}] $P$ commutes with the Rota-Baxter operator $B$, namely
\[
P\circ B=B\circ P.
\]
\end{itemize}
\end{definition}

\begin{theorem}\label{thm:averaging-quadratic-rb-Zinbiel-bialgebra}
Let $(A,\circ,B,\omega)$ be a quadratic Rota-Baxter Zinbiel algebra of
weight $\lambda$ with $\lambda\ne 0$. Let $P:A\to A$ be an averaging
operator on $(A,\circ,B,\omega)$. Let $r^{B,\omega}\in A\otimes A$ be
defined by
\[
r^{B,\omega}_+
=
\frac{1}{\lambda}(B+\lambda\operatorname{id})\circ I_\omega,
\qquad
r^{B,\omega}_-
=
\frac{1}{\lambda}B\circ I_\omega.
\]
Then
\[
(A,A^*_{r^{B,\omega}},P)
\]
is an averaging Zinbiel bialgebra.
\end{theorem}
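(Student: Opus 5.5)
We have to show that $(A,\circ,\Delta_r,P,S)$ with $r=r^{B,\omega}$ and $S=P$ satisfies conditions (a)--(e) of Definition~\ref{def:averaging-Zinbiel-bialgebra}. Conditions (a) and (c) come for free. Since $P$ is an averaging operator on $(A,\circ,B,\omega)$, $((A,\circ),P)$ is an averaging Zinbiel algebra. Theorem~\ref{thm:quadratic-rb-Zinbiel-to-r} gives that $(A,\circ,\Delta_r)$ is a (factorizable) Zinbiel bialgebra. With $S=P$, condition (d) also comes for free: it is exactly Corollary~\ref{cor:coregular-rep-avg-Zinbiel}, i.e. the averaging identities of $P$. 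Because $S=P$, condition (b) and both halves of condition (e) collapse into the single statement
\[
(P\otimes P)\Delta_r(x)=(P\otimes\operatorname{id})\Delta_r(P x)=(\operatorname{id}\otimes P)\Delta_r(Px),\qquad\forall x\in A .
\]
So the whole proof reduces to establishing this identity.

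\textbf{Step 1: $P$-invariance of $r$.} The first step is to show that $(P\otimes\operatorname{id})r=(\operatorname{id}\otimes P)r$. Identify $r$ with the map $r_+:A^*\to A$, so that $(\operatorname{id}\otimes P)r$ corresponds to $P r_+$ and $(P\otimes\operatorname{id})r$ corresponds to $r_+P^*$. The compatibility $\omega(Px,y)=\omega(x,Py)$ gives $I_\omega P^*=P I_\omega$. Together with $PB=BP$ this yields
\[
P r_+=\tfrac1\lambda P(B+\lambda\operatorname{id})I_\omega=\tfrac1\lambda(B+\lambda\operatorname{id})I_\omega P^*=r_+P^* .
\]
The same computation works for $r_-$. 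Hence both the symmetric and the skew parts of $r$ are $P$-balanced.

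\textbf{Step 2: the coalgebra identity.} Write $r=\sum_i a_i\otimes b_i$, so that
\[
\Delta_r(x)=\sum_i a_i\otimes(x\circ b_i+b_i\circ x)-(x\circ a_i)\otimes b_i .
\]
Apply $P\otimes P$ and use the averaging identities in the form $P(u)\circ P(v)=P(P(u)\circ v)=P(u\circ P(v))$. For instance,
\[
P(x\circ b_i)\otimes P(a_i)\ \text{terms become}\ P(Px\circ b_i)\text{-type terms},
\]
and Step 1 allows moving $P$ across the tensor legs of $r$. Concretely, $(P\otimes P)\Delta_r(x)$ should be rewritten as $\sum_i Pa_i\otimes(Px\circ Pb_i+Pb_i\circ Px)-(Px\circ Pa_i)\otimes Pb_i$. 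Then:
\begin{itemize}
\item[(1)] Using Step 1, one $P$ can be moved from one leg to the other, giving $\sum_i a_i\otimes P^2$-type terms.
\item[(2)] Using averaging, $P(Px\circ Pb_i)$ is replaced by $P(Px\circ b_i)$.
\end{itemize}
This should match $(\operatorname{id}\otimes P)\Delta_r(Px)$, and symmetrically $(P\otimes\operatorname{id})\Delta_r(Px)$.

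\textbf{Main obstacle.} The bookkeeping in Step 2 is the hard part. A naive computation produces a mismatch: $(P\otimes P)r$ versus $(\operatorname{id}\otimes P)r$ involve different numbers of $P$'s. To handle this, I would first try to show that the image of $r_+$ behaves well under the averaging identities: $P(x\circ r_+\xi)$ should be expressible through $r_+(P^*\xi)$. If the count of $P$'s still fails to match, I would instead pass to the dual side. There I would verify directly that $S^*=P^*$ is an averaging operator on $A^*_r$, whose product is $\xi\circ_r\eta=\mathrm{ad}^*_{r_+\xi}\eta+\cdots$. This would use the standard formula for the dual product of a coboundary Zinbiel bialgebra together with $r_\pm P^*=Pr_\pm$ and the dual representation compatibility (Corollary~\ref{cor:S-coregular-avg-Zinbiel}). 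That formulation reduces everything to identities in $\operatorname{End}(A^*)$ where Step 1 applies cleanly.
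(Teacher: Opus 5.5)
\textbf{There is a genuine gap in Step 2.}

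Your reduction of conditions (a)--(e), with $S=P$, to the single identity $(P\otimes P)\Delta_r(x)=(P\otimes\mathrm{id})\Delta_r(Px)=(\mathrm{id}\otimes P)\Delta_r(Px)$ is correct. Your Step 1 is exactly the paper's argument: $P r_\pm=r_\pm P^*$ follows from $PI_\omega=I_\omega P^*$ and $PB=BP$. The paper then simply asserts that this compatibility gives the averaging bialgebra.

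Step 2 is the only part that needs a real computation, and you do not carry it out. You say the terms ``should match'', anticipate a mismatch, and leave the dual-side fallback unexecuted. The manipulations you do sketch are also wrong:
\begin{itemize}
\item[(1)] Writing $(P\otimes P)\Delta_r(x)=\sum_i Pa_i\otimes(Px\circ Pb_i+Pb_i\circ Px)-(Px\circ Pa_i)\otimes Pb_i$ amounts to replacing $P(x\circ b_i)$ by $Px\circ Pb_i$ and $P(x\circ a_i)$ by $Px\circ Pa_i$. The averaging identity $P(u)\circ P(v)=P(P(u)\circ v)=P(u\circ P(v))$ only applies when one argument already lies in the image of $P$. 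Your expression has one factor of $P$ too many.
\item[(2)] Similarly, $P(Px\circ Pb_i)=P^2x\circ Pb_i$, which is not $P(Px\circ b_i)=Px\circ Pb_i$.
\end{itemize}
The ``mismatch in the number of $P$'s'' comes from these errors, not from the statement.

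\textbf{How to close it.} Use Step 1 \emph{before} applying the averaging identity, so that the extra $P$ is moved onto the leg carrying the multiplication operator and then absorbed. The averaging identities give the operator identities
\[
PL_xP=L_{Px}P,\quad P(L_x+R_x)P=(L_{Px}+R_{Px})P,\quad PL_{Px}=L_{Px}P,\quad P(L_{Px}+R_{Px})=(L_{Px}+R_{Px})P.
\]
Write $\Delta_r(x)=(\mathrm{id}\otimes(L_x+R_x))r-(L_x\otimes\mathrm{id})r$ and use $(P\otimes\mathrm{id})r=(\mathrm{id}\otimes P)r$ to get
\begin{align*}
(P\otimes P)\Delta_r(x)&=\bigl(\mathrm{id}\otimes P(L_x+R_x)P\bigr)r-\bigl(PL_xP\otimes\mathrm{id}\bigr)r,\\
(\mathrm{id}\otimes P)\Delta_r(Px)&=\bigl(\mathrm{id}\otimes P(L_{Px}+R_{Px})\bigr)r-\bigl(L_{Px}P\otimes\mathrm{id}\bigr)r,\\
(P\otimes\mathrm{id})\Delta_r(Px)&=\bigl(\mathrm{id}\otimes(L_{Px}+R_{Px})P\bigr)r-\bigl(PL_{Px}\otimes\mathrm{id}\bigr)r.
\end{align*}
By the operator identities, all three equal $\bigl(\mathrm{id}\otimes(L_{Px}+R_{Px})P\bigr)r-(L_{Px}P\otimes\mathrm{id})r$. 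This uses only the balance of $r$ from Step 1 and needs no passage to $A^*$.

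With this inserted, your argument is complete and more explicit than the paper's, whose last step is only an assertion. The same computation underlies the proof of Theorem~\ref{thm:avg-Zinbiel-ybe-sufficient} with $S=P$.
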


\begin{proof}
By Theorem~\ref{thm:quadratic-rb-Zinbiel-to-r},
$r^{B,\omega}$ gives rise to a factorizable Zinbiel bialgebra
$(A,A^*_{r^{B,\omega}})$.

Since $((A,\circ),P,\omega)$ is a quadratic averaging Zinbiel algebra, we have
\[
\omega(Px,y)=\omega(x,Py),
\qquad \forall x,y\in A.
\]
Hence the induced isomorphism $I_\omega:A^*\to A$ satisfies
\[
P\circ I_\omega=I_\omega\circ P^*.
\]
Moreover, since $P\circ B=B\circ P$, we obtain
\[
\begin{aligned}
P\circ r^{B,\omega}_+
&=
\frac{1}{\lambda}P\circ (B+\lambda\operatorname{id})\circ I_\omega  \\
&=
\frac{1}{\lambda}(B+\lambda\operatorname{id})\circ P\circ I_\omega  \\
&=
\frac{1}{\lambda}(B+\lambda\operatorname{id})\circ I_\omega\circ P^*  \\
&=
r^{B,\omega}_+\circ P^*.
\end{aligned}
\]
Similarly,
\[
P\circ r^{B,\omega}_-=r^{B,\omega}_-\circ P^*.
\]
Thus $P$ is compatible with the tensor $r^{B,\omega}$ and with the dual
Zinbiel algebra structure induced by $r^{B,\omega}$.

Since $P$ is an averaging operator on $(A,\circ)$, and since the above
compatibility implies that $P^*$ is the corresponding averaging operator on
$A^*_{r^{B,\omega}}$, the coboundary Zinbiel bialgebra induced by
$r^{B,\omega}$ becomes an averaging Zinbiel bialgebra. Therefore, $(A,A^*_{r^{B,\omega}},P)$ is an averaging Zinbiel bialgebra.
\end{proof}

\delete{\begin{remark}\label{rem:averaging-operator-quadratic-rb-Zinbiel}
Let $(A,\circ,B,\omega)$ be a quadratic Rota-Baxter Zinbiel algebra of
weight $\lambda$. Since
\[
\omega(Bx,y)+\omega(x,By)+\lambda\omega(x,y)=0,
\qquad \forall x,y\in A,
\]
we have
\[
B^{*\omega}=-B-\lambda\operatorname{id}_A.
\]
If
\[
B\circ P^{*\omega}=-P\circ B,
\]
then taking the $\omega$ adjoint gives
\[
P\circ B^{*\omega}=-B^{*\omega}\circ P^{*\omega}.
\]
Using $B^{*\omega}=-B-\lambda\operatorname{id}_A$, we obtain
\[
-P\circ B-\lambda P
=
B\circ P^{*\omega}+\lambda P^{*\omega}.
\]
Together with $B\circ P^{*\omega}=-P\circ B$, this gives
\[
\lambda(P+P^{*\omega})=0.
\]
Consequently, if $\lambda\ne 0$, then
\[
P=-P^{*\omega}.
\]
\end{remark}

\begin{theorem}\label{thm:adjoint-averaging-descendent-Zinbiel}
Let $(A,\circ,B,\omega)$ be a quadratic Rota-Baxter Zinbiel algebra of
weight $\lambda$ with $\lambda\ne0$, and let $P:A\to A$ be an averaging
operator on $(A,\circ,B,\omega)$. Then $-P^{*\omega}$ is an averaging
operator on the descendent Zinbiel algebra $A_B=(A,\circ_B)$, where
\[
x\circ_B y
=
B(x)\circ y+x\circ B(y)+\lambda x\circ y,
\qquad \forall x,y\in A.
\]
\end{theorem}

\begin{proof}
Since $P$ is an averaging operator on $(A,\circ,B,\omega)$, we have
\[
B\circ P^{*\omega}=-P\circ B.
\]
By Remark~\ref{rem:averaging-operator-quadratic-rb-Zinbiel}, the condition
$\lambda\ne0$ implies
\[
P=-P^{*\omega}.
\]
Hence
\[
B\circ P=P\circ B.
\]

We now prove that $P$ is an averaging operator on the descendent Zinbiel
algebra $(A,\circ_B)$. For all $x,y\in A$, we have
\[
\begin{aligned}
P(x)\circ_B P(y)
&=
B(P(x))\circ P(y)+P(x)\circ B(P(y))+\lambda P(x)\circ P(y)\\
&=
P(B(x))\circ P(y)+P(x)\circ P(B(y))+\lambda P(x)\circ P(y).
\end{aligned}
\]
On the other hand,
\[
\begin{aligned}
P\bigl(P(x)\circ_B y\bigr)
&=
P\bigl(B(P(x))\circ y+P(x)\circ B(y)+\lambda P(x)\circ y\bigr)\\
&=
P\bigl(P(B(x))\circ y\bigr)
+P\bigl(P(x)\circ B(y)\bigr)
+\lambda P\bigl(P(x)\circ y\bigr)\\
&=
P(B(x))\circ P(y)+P(x)\circ P(B(y))+\lambda P(x)\circ P(y),
\end{aligned}
\]
where we used the averaging identity of $P$ on $(A,\circ)$. Thus
\[
P(x)\circ_B P(y)=P\bigl(P(x)\circ_B y\bigr).
\]
Similarly,
\[
\begin{aligned}
P\bigl(x\circ_B P(y)\bigr)
&=
P\bigl(B(x)\circ P(y)+x\circ B(P(y))+\lambda x\circ P(y)\bigr)\\
&=
P\bigl(B(x)\circ P(y)\bigr)
+P\bigl(x\circ P(B(y))\bigr)
+\lambda P\bigl(x\circ P(y)\bigr)\\
&=
P(B(x))\circ P(y)+P(x)\circ P(B(y))+\lambda P(x)\circ P(y).
\end{aligned}
\]
Therefore,
\[
P(x)\circ_B P(y)
=
P\bigl(P(x)\circ_B y\bigr)
=
P\bigl(x\circ_B P(y)\bigr),
\qquad \forall x,y\in A.
\]
Hence $P$ is an averaging operator on $(A,\circ_B)$.

Since $P=-P^{*\omega}$, it follows that $-P^{*\omega}$ is an averaging
operator on the descendent Zinbiel algebra $A_B$.
\end{proof}}

\section{Admissible Zinbiel Yang-Baxter equations and relative Rota-Baxter operators}\label{se:Admissible Zinbiel Yang-Baxter equations}
In this section, we construct averaging Zinbiel bialgebras from solutions
of a Zinbiel Yang-Baxter type equation and from relative Rota-Baxter
operators. We first formulate the \(S\) admissible Zinbiel Yang-Baxter
equation in an averaging Zinbiel algebra and prove that its solutions give
rise to averaging Zinbiel bialgebras. We then introduce relative Rota-Baxter operators on averaging Zinbiel algebras. Such operators produce
descendent averaging Zinbiel algebras, matched pairs, and solutions of the
\(S\) admissible Zinbiel Yang-Baxter equation.

\begin{lemma}\label{lem:admissible-Zinbiel-ybe}
Let $((A,\circ),P)$ be an $S$-admissible averaging Zinbiel algebra and let
$r\in A\otimes A.$
Define a linear map $\Delta_r:A\to A\otimes A$ by Eq.(\mref{eq:coboundary-Zinbiel}).
Suppose that $\Delta_r^*$ defines a Zinbiel algebra structure on $A^*$.
Then the following conclusions hold.

\begin{itemize}
\item[\emph{(a)}]
Eq.(\mref{eq:averaging Zinbiel coalgebra})
holds if and only if, for all $x\in A$,
\begin{equation}\label{eq:Zinbiel-lemma41-a1}
\begin{aligned}
&\bigl(S\otimes S\circ(L_x+R_x)-S\circ L_x\otimes S\bigr)(r)  \\
={}&
\bigl(S\otimes(L_{S(x)}+R_{S(x)})
      -L_{S(x)}\circ P\otimes\operatorname{id}\bigr)(r)\\
={}&
\bigl(\operatorname{id}\otimes (L_{S(x)}+R_{S(x)})\circ P
      -L_{S(x)}\otimes S\bigr)(r).
\end{aligned}
\end{equation}

\item[\emph{(b)}]
Eq.(\mref{eq:averaging Zinbiel bialgebraP*1})
holds if and only if, for all $x\in A$,
\begin{equation}\label{eq:Zinbiel-lemma41-b1}
\begin{aligned}
&\bigl(P\otimes S\circ(L_x+R_x)-P\circ L_x\otimes S\bigr)(r) \\
={}&
\bigl(\operatorname{id}\otimes (L_{P(x)}+R_{P(x)})\circ S
      -L_{P(x)}\otimes S\bigr)(r)\\
={}&
\bigl(P\otimes(L_{P(x)}+R_{P(x)})
      -L_{P(x)}\circ P\otimes\operatorname{id}\bigr)(r).
\end{aligned}
\end{equation}

\item[\emph{(c)}]
Eq.(\mref{eq:averaging Zinbiel bialgebraP*2})
holds if and only if, for all $x\in A$,
\begin{equation}\label{eq:Zinbiel-lemma41-c1}
\begin{aligned}
&\bigl(S\otimes P\circ(L_x+R_x)-S\circ L_x\otimes P\bigr)(r) \\
={}&
\bigl(S\otimes(L_{P(x)}+R_{P(x)})
      -L_{P(x)}\circ S\otimes\operatorname{id}\bigr)(r)\\
={}&
\bigl(\operatorname{id}\otimes (L_{P(x)}+R_{P(x)})\circ P
      -L_{P(x)}\otimes P\bigr)(r).
\end{aligned}
\end{equation}
\end{itemize}
\end{lemma}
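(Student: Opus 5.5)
The plan is a direct substitution of the coboundary formula \meqref{eq:coboundary-Zinbiel} into each of the three tensor identities. Each side is then rewritten as a linear operator on $A\otimes A$ applied to $r$. Write $r=\sum_i a_i\otimes b_i$, so that
\[
\Delta_r(x)=\sum_i a_i\otimes (x\circ b_i+b_i\circ x)-\sum_i x\circ a_i\otimes b_i .
\]
For any linear maps $F,G:A\to A$ one then has
\[
(F\otimes G)\Delta_r(y)=\bigl(F\otimes G(L_y+R_y)-F L_y\otimes G\bigr)(r).
\]
This single formula, applied with suitable choices of $F$, $G$ and $y\in\{x,S(x),P(x)\}$, produces every expression appearing in the three statements.

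For (a), apply the formula three times. With $F=G=S$ and $y=x$, the left side $(S\otimes S)\Delta_r(x)$ becomes the first line of \meqref{eq:Zinbiel-lemma41-a1}. With $F=S$, $G=\operatorname{id}$ and $y=S(x)$, we get $(S\otimes \operatorname{id})\Delta_r(S(x))=\bigl(S\otimes(L_{S(x)}+R_{S(x)})-SL_{S(x)}\otimes\operatorname{id}\bigr)(r)$. With $F=\operatorname{id}$, $G=S$ and $y=S(x)$, we get $\bigl(\operatorname{id}\otimes S(L_{S(x)}+R_{S(x)})-L_{S(x)}\otimes S\bigr)(r)$.

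To reach the stated form, I would then use the $S$-admissibility of $((A,\circ),P)$, namely the identities in Definition~\ref{def:averaging-Zinbiel-bialgebra}(d). They give $S(x\circ S(y))=P(x)\circ S(y)$ and $S(S(x)\circ y)=S(x)\circ P(y)$, i.e. operator identities such as $S L_{S(x)}=L_{S(x)}P$ and $SR_{S(x)}=\ldots$, which convert compositions with $S$ into the compositions with $P$ displayed in \meqref{eq:Zinbiel-lemma41-a1}. Since all these are equalities of tensors obtained by applying fixed operators to $r$, each equality in \meqref{eq:averaging Zinbiel coalgebra} holds if and only if the corresponding equality in \meqref{eq:Zinbiel-lemma41-a1} holds.

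Parts (b) and (c) are handled identically. In (b), take $(F,G)=(P,S)$ with $y=x$, then $(\operatorname{id},S)$ and $(P,\operatorname{id})$ with $y=P(x)$. In (c), take $(S,P)$ with $y=x$, then $(S,\operatorname{id})$ and $(\operatorname{id},P)$ with $y=P(x)$. In both cases the admissibility identities are used to move $S$ or $P$ past $L_{P(x)}$ and $R_{P(x)}$, via $S L_{P(x)}=L_{P(x)}S$ on the relevant inputs; for (c) the averaging identity for $P$ is used in the same way. The hypothesis that $\Delta_r^*$ is a Zinbiel product is needed only so that the objects are meaningful; it plays no role in the computation.

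The main obstacle is bookkeeping. I need to check that each operator replacement, such as $S\circ L_{S(x)}\rightsquigarrow L_{S(x)}\circ P$, is exactly licensed by one of the admissibility or averaging identities evaluated on the leg of $r$ it acts on, and that no replacement silently requires an extra hypothesis. If some displayed term does not match, I would state the equivalence in the unreplaced form produced by the $(F\otimes G)\Delta_r$ formula. That unreplaced form is tautologically equivalent to the original identity, and I would then check term by term whether the rewriting is valid.
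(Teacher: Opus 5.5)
Your proposal is correct and is the same argument as the paper's: substitute the coboundary formula into each side, then rewrite using $SL_{S(x)}=L_{S(x)}P$, $SR_{S(x)}=R_{S(x)}P$, $SL_{P(x)}=L_{P(x)}S$, $SR_{P(x)}=R_{P(x)}S$ (all from $S$-admissibility) and $PL_{P(x)}=L_{P(x)}P$, $PR_{P(x)}=R_{P(x)}P$ (from the averaging identity), which hold as operator identities on all of $A$, so every displayed line matches exactly. One small correction: the averaging identity for $P$ is already needed in (b), to turn $PL_{P(x)}$ into $L_{P(x)}P$ in the third line, and not only in (c).
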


\begin{proof}
We only prove \emph{(a)}, the proofs of (b) and (c) are similar.

For all $x\in A$, by Eq.\eqref{eq:coboundary-Zinbiel}, we have
\[
\begin{aligned}
(S\otimes S)\Delta_r(x)
&=
(S\otimes S)
\bigl(\operatorname{id}\otimes(L_x+R_x)-L_x\otimes\operatorname{id}\bigr)(r)\\
&=
\bigl(S\otimes S\circ(L_x+R_x)-S\circ L_x\otimes S\bigr)(r).
\end{aligned}
\]
On the other hand,
\[
\begin{aligned}
(S\otimes\operatorname{id})\Delta_r(S(x))
&=
(S\otimes\operatorname{id})
\bigl(\operatorname{id}\otimes(L_{S(x)}+R_{S(x)})
      -L_{S(x)}\otimes\operatorname{id}\bigr)(r)\\
&=
\bigl(S\otimes(L_{S(x)}+R_{S(x)})
      -S\circ L_{S(x)}\otimes\operatorname{id}\bigr)(r).
\end{aligned}
\]
Since $((A,\circ),P)$ is $S$-admissible, we have
\[
S(S(x)\circ y)=S(x)\circ P(y),
\qquad \forall x,y\in A.
\]
Hence
\[
S\circ L_{S(x)}=L_{S(x)}\circ P.
\]
Therefore,
\[
(S\otimes\operatorname{id})\Delta_r(S(x))
=
\bigl(S\otimes(L_{S(x)}+R_{S(x)})
      -L_{S(x)}\circ P\otimes\operatorname{id}\bigr)(r).
\]

Similarly,
\[
\begin{aligned}
(\operatorname{id}\otimes S)\Delta_r(S(x))
&=
(\operatorname{id}\otimes S)
\bigl(\operatorname{id}\otimes(L_{S(x)}+R_{S(x)})
      -L_{S(x)}\otimes\operatorname{id}\bigr)(r)\\
&=
\bigl(\operatorname{id}\otimes S\circ(L_{S(x)}+R_{S(x)})
      -L_{S(x)}\otimes S\bigr)(r).
\end{aligned}
\]
By the $S$ admissibility again,
\[
S\circ L_{S(x)}=L_{S(x)}\circ P,
\qquad
S\circ R_{S(x)}=R_{S(x)}\circ P.
\]
Thus
\[
S\circ(L_{S(x)}+R_{S(x)})
=
(L_{S(x)}+R_{S(x)})\circ P,
\]
and consequently
\[
(\operatorname{id}\otimes S)\Delta_r(S(x))
=
\bigl(\operatorname{id}\otimes(L_{S(x)}+R_{S(x)})\circ P
      -L_{S(x)}\otimes S\bigr)(r).
\]
This completes the proof.
\end{proof}

\begin{theorem}\label{thm:avg-Zinbiel-bialgebra-coboundary}
Let $((A,\circ),P)$ be an $S$-admissible averaging Zinbiel algebra and let
$r\in A\otimes A$.
Define a linear map $\Delta_r:A\to A\otimes A$ by Eq.(\mref{eq:coboundary-Zinbiel}).
Then $(A,\circ,\Delta_r,P,S)$ is an averaging Zinbiel bialgebra if and only
if Eqs.(\mref{eq:coboundary-Zinbiel-condition-one})-(\mref{eq:Zinbiel-lemma41-c1}) hold.
\end{theorem}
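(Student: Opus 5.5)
We prove the theorem by checking the five conditions (a)--(e) of Definition~\ref{def:averaging-Zinbiel-bialgebra} one at a time for the quintuple $(A,\circ,\Delta_r,P,S)$. Each condition is either automatic from the hypotheses or equivalent to one of the listed equations.

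\emph{Condition (a).} This holds by hypothesis, since $((A,\circ),P)$ is an averaging Zinbiel algebra.

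\emph{Condition (d).} This is exactly the assumption that $((A,\circ),P)$ is $S$-admissible, i.e.\ Eqs.~\eqref{eq:averaging Zinbiel bialgebraS1}.

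\emph{Condition (c).} By Theorem~\ref{thm:coboundary-Zinbiel}, $(A,\circ,\Delta_r)$ is a Zinbiel bialgebra if and only if Eq.~\eqref{eq:coboundary-Zinbiel-condition-one} and Eq.~\eqref{eq:coboundary-Zinbiel-condition-two} hold. Implicitly this includes that $\Delta_r^*$ is a Zinbiel product on $A^*$, i.e.\ that $(A,\Delta_r)$ is a Zinbiel coalgebra. Consequently, whenever Eqs.~\eqref{eq:coboundary-Zinbiel-condition-one}--\eqref{eq:coboundary-Zinbiel-condition-two} hold, the standing hypothesis of Lemma~\ref{lem:admissible-Zinbiel-ybe} is satisfied. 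Conversely, if $(A,\circ,\Delta_r,P,S)$ is an averaging Zinbiel bialgebra, then $(A,\circ,\Delta_r)$ is a Zinbiel bialgebra, so these two equations hold and the lemma applies as well.

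\emph{Conditions (b) and (e).} With Lemma~\ref{lem:admissible-Zinbiel-ybe} now available, we translate the coalgebra-side averaging conditions.
\begin{itemize}
\item[(i)] By Lemma~\ref{lem:admissible-Zinbiel-ybe}(a), condition (b), namely Eq.~\eqref{eq:averaging Zinbiel coalgebra} for $\Delta_r$, is equivalent to Eq.~\eqref{eq:Zinbiel-lemma41-a1}.
\item[(ii)] Condition (e) is equivalent to Eqs.~\eqref{eq:averaging Zinbiel bialgebraP*1}--\eqref{eq:averaging Zinbiel bialgebraP*2}. Definition~\ref{def:averaging-Zinbiel-bialgebra} records this by dualizing the $P^*$-admissibility of $((A^*,\circ_{A^*}),S^*)$ through $\langle \xi\circ_{A^*}\eta,x\rangle=\langle\xi\otimes\eta,\Delta_r(x)\rangle$.
\item[(iii)] By parts (b) and (c) of the lemma, Eqs.~\eqref{eq:averaging Zinbiel bialgebraP*1}--\eqref{eq:averaging Zinbiel bialgebraP*2} are in turn equivalent to Eqs.~\eqref{eq:Zinbiel-lemma41-b1} and \eqref{eq:Zinbiel-lemma41-c1}.
\end{itemize}
The second half of (b) also requires $((A,\Delta_r))$ to be a Zinbiel coalgebra, which is already covered by (c).

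Combining the pieces, the quintuple is an averaging Zinbiel bialgebra if and only if Eqs.~\eqref{eq:coboundary-Zinbiel-condition-one}--\eqref{eq:Zinbiel-lemma41-c1} hold.

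\emph{Main obstacle.} The only delicate point is the logical order in the ``if'' direction. Lemma~\ref{lem:admissible-Zinbiel-ybe} presupposes that $\Delta_r^*$ is a Zinbiel product, so we must first invoke Theorem~\ref{thm:coboundary-Zinbiel}, which yields the Zinbiel coalgebra and bialgebra structure, before translating the averaging conditions through the lemma.

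The rewriting of each composite in parts (b) and (c) of the lemma mirrors the proof of part (a). For example, in (c) one uses $S\circ L_{P(x)}=L_{P(x)}\circ S$, which follows from the first identity of Eq.~\eqref{eq:averaging Zinbiel bialgebraS1}. We take these as established by the lemma.
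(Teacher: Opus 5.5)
Your proposal is correct and takes the same route as the paper, whose entire proof is ``It is obvious.'' That one line stands for exactly the assembly you carry out: (a) and (d) hold by hypothesis, Theorem~\ref{thm:coboundary-Zinbiel} handles the Zinbiel bialgebra part, and Lemma~\ref{lem:admissible-Zinbiel-ybe} translates conditions (b) and (e). Your care about applying Theorem~\ref{thm:coboundary-Zinbiel} before the lemma is a harmless extra precaution, since the lemma's rewriting only uses $S$-admissibility and the averaging identity, not the Zinbiel property of $\Delta_r^*$.
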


\begin{proof}
It is obvious.
\end{proof}

In particular, by further simplifying the above equivalent conditions, we
obtain the following theorem.

\begin{theorem}\label{thm:avg-Zinbiel-ybe-sufficient}
Let $((A,\circ),P)$ be an $S$-admissible averaging Zinbiel algebra and let
$r\in A\otimes A$.
Define $\Delta_r:A\to A\otimes A$ by Eq.(\mref{eq:coboundary-Zinbiel}).
Then $(A,\circ,\Delta_r,P,S)$ is an averaging Zinbiel bialgebra if
\eqref{eq:coboundary-Zinbiel-condition-one} and the following equations
hold:
\begin{equation}\label{eq:Zinbiel-ybe}
-r_{13}\circ r_{12}
-r_{23}\circ r_{21}
+r_{13}\cdot r_{21}
+r_{12}\cdot r_{23}
-r_{13}\cdot r_{23}
=0,
\end{equation}
\begin{equation}\label{eq:Zinbiel-S-id-id-P}
(S\otimes \operatorname{id}-\operatorname{id}\otimes P)(r)=0,
\end{equation}
and
\begin{equation}\label{eq:Zinbiel-P-id-id-S}
(P\otimes \operatorname{id}-\operatorname{id}\otimes S)(r)=0.
\end{equation}
\end{theorem}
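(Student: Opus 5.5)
The plan is to reduce to Theorem~\ref{thm:avg-Zinbiel-bialgebra-coboundary}. That result says $(A,\circ,\Delta_r,P,S)$ is an averaging Zinbiel bialgebra exactly when \eqref{eq:coboundary-Zinbiel-condition-one}, \eqref{eq:coboundary-Zinbiel-condition-two} and the three identities \eqref{eq:Zinbiel-lemma41-a1}, \eqref{eq:Zinbiel-lemma41-b1}, \eqref{eq:Zinbiel-lemma41-c1} of Lemma~\ref{lem:admissible-Zinbiel-ybe} hold. Condition \eqref{eq:coboundary-Zinbiel-condition-one} is assumed. Equation \eqref{eq:Zinbiel-ybe} says $\llbracket r,r\rrbracket=0$, so \eqref{eq:coboundary-Zinbiel-condition-two} holds trivially. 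By Theorem~\ref{thm:coboundary-Zinbiel}, $(A,\circ,\Delta_r)$ is therefore a Zinbiel bialgebra, and in particular $\Delta_r^*$ is a Zinbiel product on $A^*$. This is the hypothesis Lemma~\ref{lem:admissible-Zinbiel-ybe} needs. It remains to derive the three chains of equalities from \eqref{eq:Zinbiel-S-id-id-P} and \eqref{eq:Zinbiel-P-id-id-S}.

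The tools are the following. Write $r=\sum_i a_i\otimes b_i$. Then \eqref{eq:Zinbiel-S-id-id-P} says $\sum S(a_i)\otimes b_i=\sum a_i\otimes P(b_i)$, and \eqref{eq:Zinbiel-P-id-id-S} says $\sum P(a_i)\otimes b_i=\sum a_i\otimes S(b_i)$. These let us move $S$ or $P$ from one leg to the other, swapping $S\leftrightarrow P$. The $S$-admissibility identities give the operator relations
\[
S L_{P(x)}=L_{P(x)}S,\quad S R_{P(x)}=R_{P(x)}S,\quad S L_{S(x)}=L_{S(x)}P,\quad S R_{S(x)}=R_{S(x)}P,
\]
together with $S L_x S=L_{P(x)}S$, $S R_x S=R_{S(x)}S$, and so on. The averaging identity gives the analogous relations for $P$.

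For (a), start from $\bigl(S\otimes S(L_x+R_x)-SL_x\otimes S\bigr)(r)$. In the first term, move the left $S$ across as $P$ on the right leg. This gives $(\mathrm{id}\otimes S(L_x+R_x)P)(r)$. Then use $S L_x P(y)=S(x\circ P(y))=S(x)\circ P(y)=L_{S(x)}P(y)$, and the similar identity $S R_x P(y)=S(P(y)\circ x)=P(y)\circ S(x)$, which is again $R_{S(x)}P(y)$. This turns the first term into $(\mathrm{id}\otimes(L_{S(x)}+R_{S(x)})P)(r)$. In the second term, move the right $S$ across as $P$ on the left leg. This gives $(SL_xP\otimes \mathrm{id})(r)=(L_{S(x)}P\otimes\mathrm{id})(r)$. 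Shifting $P$ back as $S$ turns this into $(L_{S(x)}\otimes S)(r)$, which matches the third expression of \eqref{eq:Zinbiel-lemma41-a1}. The middle expression is obtained the same way, by moving $P$ on the right leg back to $S$ on the left leg.

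Parts (b) and (c) are handled identically. One uses the relations $SL_{P(x)}=L_{P(x)}S$ and $P L_x P=L_{P(x)}P$, together with their right-multiplication analogues.

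The main obstacle is bookkeeping. One must check that each leg-transfer is applied to a term whose other leg carries exactly the operator needed, so that the transfer is legitimate; $r$ need not be symmetric, so the transfers must be done term by term. There is also a likely subtlety in (b) and (c): the needed identity might require a relation such as $P R_x S=R_{P(x)}S$, which is not literally among the stated admissibility identities. If a step fails this way, I would first try transferring the operator to the opposite leg, so as to reach a configuration of the form $S(\cdot\circ P(y))$ or $P(P(x)\circ\cdot)$ that the axioms do cover.
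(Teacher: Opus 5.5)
Your plan follows the paper's argument and is sound. The paper also verifies the three averaging conditions on $\Delta_r$ by moving $S$ or $P$ between the legs of $r$ using \eqref{eq:Zinbiel-S-id-id-P} and \eqref{eq:Zinbiel-P-id-id-S}, and then rewriting with the $S$-admissibility and averaging identities. Your reduction through Theorem~\ref{thm:coboundary-Zinbiel} and your computation for (a) are correct.

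One identity in your list of tools is wrong, and it is exactly the one part (b) needs. We have $SR_xS(y)=S(S(y)\circ x)=S(y)\circ P(x)$, so $SR_xS=R_{P(x)}S$, not $R_{S(x)}S$. The paper's proof uses the correct form for this step.

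With it, (b) closes and no relation of the form $PR_xS=R_{P(x)}S$ is needed:
\begin{itemize}
\item Transferring the left $P$ in $(P\otimes S(L_x+R_x))(r)$ to an $S$ on the right leg gives $(\operatorname{id}\otimes S(L_x+R_x)S)(r)$. By $SL_xS=L_{P(x)}S$ and $SR_xS=R_{P(x)}S$, this equals $(\operatorname{id}\otimes(L_{P(x)}+R_{P(x)})S)(r)$.
\item By the averaging identity, $(PL_x\otimes S)(r)=(PL_xP\otimes\operatorname{id})(r)=(L_{P(x)}P\otimes\operatorname{id})(r)=(L_{P(x)}\otimes S)(r)$.
\item The third expression in \eqref{eq:Zinbiel-lemma41-b1} then follows from the second by leg transfers alone.
\end{itemize}

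Part (c) works the same way:
\begin{itemize}
\item By averaging, $(S\otimes P(L_x+R_x))(r)=(\operatorname{id}\otimes P(L_x+R_x)P)(r)=(\operatorname{id}\otimes(L_{P(x)}+R_{P(x)})P)(r)$.
\item Also $(SL_x\otimes P)(r)=(SL_xS\otimes\operatorname{id})(r)=(L_{P(x)}S\otimes\operatorname{id})(r)$.
\item These match the other two expressions after transfers.
\end{itemize}

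You cite $SL_{P(x)}=L_{P(x)}S$ and $SR_{P(x)}=R_{P(x)}S$ for (b) and (c), but they are not used at this stage. They were already used in Lemma~\ref{lem:admissible-Zinbiel-ybe} to put the conditions into this form.
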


\begin{proof}
It remains to prove that
\eqref{eq:averaging Zinbiel coalgebra},
\eqref{eq:averaging Zinbiel bialgebraP*1}, and
\eqref{eq:averaging Zinbiel bialgebraP*2} hold. We first prove
\eqref{eq:averaging Zinbiel coalgebra}. For all $x\in A$, we have
\[
\begin{aligned}
&(S\otimes S)\Delta_r(x)
-(S\otimes \operatorname{id})\Delta_r(S(x))\\
={}&
\bigl(S\otimes S\circ(L_x+R_x)-S\circ L_x\otimes S\bigr)(r)\\
&\quad
-\bigl(S\otimes (L_{S(x)}+R_{S(x)})
      -S\circ L_{S(x)}\otimes \operatorname{id}\bigr)(r).
\end{aligned}
\]
Using the $S$ admissibility of $((A,\circ),P)$, we have
\[
S\circ L_x\circ P=L_{S(x)}\circ P,
\qquad
S\circ R_x\circ P=R_{S(x)}\circ P,
\]
and
\[
S\circ L_{S(x)}=L_{S(x)}\circ P.
\]
Moreover, by \eqref{eq:Zinbiel-S-id-id-P} and
\eqref{eq:Zinbiel-P-id-id-S}, we get
\[
(S\otimes \operatorname{id})(r)=(\operatorname{id}\otimes P)(r),
\qquad
(P\otimes \operatorname{id})(r)=(\operatorname{id}\otimes S)(r).
\]
Hence
\[
\begin{aligned}
&(S\otimes S)\Delta_r(x)
-(S\otimes \operatorname{id})\Delta_r(S(x))\\
={}&
\bigl(\operatorname{id}\otimes S\circ(L_x+R_x)\circ P\bigr)(r)
-\bigl(S\circ L_x\circ P\otimes \operatorname{id}\bigr)(r)\\
&\quad
-\bigl(\operatorname{id}\otimes (L_{S(x)}+R_{S(x)})\circ P\bigr)(r)
+\bigl(L_{S(x)}\circ P\otimes \operatorname{id}\bigr)(r)\\
={}&0.
\end{aligned}
\]
Similarly,
\[
(S\otimes S)\Delta_r(x)
=
(\operatorname{id}\otimes S)\Delta_r(S(x)).
\]
Thus \eqref{eq:averaging Zinbiel coalgebra} holds.

Next, for all $x\in A$, a similar calculation gives
\[
(P\otimes S)\Delta_r(x)
=
(\operatorname{id}\otimes S)\Delta_r(P(x))
=
(P\otimes \operatorname{id})\Delta_r(P(x)).
\]
Indeed, this follows from
\[
S\circ L_x\circ S=L_{P(x)}\circ S,
\qquad
S\circ R_x\circ S=R_{P(x)}\circ S,
\]
and
\[
P\circ L_x\circ P=L_{P(x)}\circ P,
\]
which are consequences of the $S$ admissibility and the averaging identity
for $P$. Hence \eqref{eq:averaging Zinbiel bialgebraP*1} holds.

The proof of \eqref{eq:averaging Zinbiel bialgebraP*2} is the same. Therefore,
by Theorem~\ref{thm:avg-Zinbiel-bialgebra-coboundary},
$(A,\circ,\Delta_r,P,S)$ is an averaging Zinbiel bialgebra.
\end{proof}

\begin{definition}\label{def:S-admissible-Zinbiel-ybe}
Let $((A,\circ),P)$ be an averaging Zinbiel algebra. Suppose that
$r\in A\otimes A$ and $S:A\to A$ is a linear map. The Zinbiel Yang-Baxter
equation
\begin{equation}\label{eq:S-admissible-Zinbiel-ybe-main}
\llbracket r,r\rrbracket
=
-r_{13}\circ r_{12}
-r_{23}\circ r_{21}
+r_{13}\cdot r_{21}
+r_{12}\cdot r_{23}
-r_{13}\cdot r_{23}
=0
\end{equation}
together with
\begin{equation}\label{eq:S-admissible-Zinbiel-ybe-P-S}
(S\otimes \operatorname{id}-\operatorname{id}\otimes P)(r)=0
\end{equation}
and
\begin{equation}\label{eq:S-admissible-Zinbiel-ybe-S-P}
(P\otimes \operatorname{id}-\operatorname{id}\otimes S)(r)=0
\end{equation}
is called the {\bf $S$ averaging Zinbiel Yang-Baxter equation}, or the
{\bf $S$-admissible Zinbiel Yang-Baxter equation} in the averaging Zinbiel algebra
$((A,\circ),P)$.
\end{definition}

\begin{remark}\label{rem:symmetric-S-admissible-Zinbiel-ybe}
Let $r=\sum_{i=1}^n a_i\otimes b_i.$ If $r$ is symmetric, then
\[
\sum_{i=1}^n S(a_i)\otimes b_i
=
\sum_{i=1}^n a_i\otimes P(b_i)
\]
is equivalent to
\[
\sum_{i=1}^n P(a_i)\otimes b_i
=
\sum_{i=1}^n a_i\otimes S(b_i).
\]
Therefore,
\[
(S\otimes \operatorname{id})(r)=(\operatorname{id}\otimes P)(r)
\]
holds if and only if
\[
(P\otimes \operatorname{id})(r)=(\operatorname{id}\otimes S)(r)
\]
holds.
\end{remark}

By Theorem~\ref{thm:avg-Zinbiel-ybe-sufficient} and
Remark~\ref{rem:symmetric-S-admissible-Zinbiel-ybe}, we obtain the following
result.

\begin{theorem}\label{thm:symmetric-S-admissible-Zinbiel-ybe}
Let $((A,\circ),P)$ be an $S$-admissible averaging Zinbiel algebra, and let
$\Delta_r:A\to A\otimes A$ be given by
\[
\Delta_r(x)
=
\bigl(\operatorname{id}\otimes (L_x+R_x)-L_x\otimes \operatorname{id}\bigr)r,
\qquad \forall x\in A.
\]
If $r\in A\otimes A$ is a symmetric solution of the $S$-admissible Zinbiel
Yang-Baxter equation in $((A,\circ),P)$, then
\[
(A,\circ,\Delta_r,P,S)
\]
is an averaging Zinbiel bialgebra.
\end{theorem}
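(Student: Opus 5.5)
The plan is to reduce the statement to Theorem~\ref{thm:avg-Zinbiel-ybe-sufficient}. That theorem gives an averaging Zinbiel bialgebra $(A,\circ,\Delta_r,P,S)$ once four things hold: Eq.~\eqref{eq:coboundary-Zinbiel-condition-one}, the Zinbiel Yang-Baxter equation \eqref{eq:Zinbiel-ybe}, and the two compatibilities \eqref{eq:Zinbiel-S-id-id-P} and \eqref{eq:Zinbiel-P-id-id-S}. By hypothesis $r$ solves the $S$-admissible Zinbiel Yang-Baxter equation of Definition~\ref{def:S-admissible-Zinbiel-ybe}. So \eqref{eq:Zinbiel-ybe}, \eqref{eq:Zinbiel-S-id-id-P} and \eqref{eq:Zinbiel-P-id-id-S} are given directly. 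Remark~\ref{rem:symmetric-S-admissible-Zinbiel-ybe} also shows that for symmetric $r$ the last two conditions are equivalent, so only one of them needs to be assumed.

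The remaining point is Eq.~\eqref{eq:coboundary-Zinbiel-condition-one}. In Theorem~\ref{thm:coboundary-Zinbiel} we decompose $r=a+\Lambda$ with $a\in\wedge^2A$ and $\Lambda\in S^2(A)$. Since $r$ is symmetric, $a=0$. Condition \eqref{eq:coboundary-Zinbiel-condition-one} is linear in $a$, so it holds trivially.

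This also settles whether $(A,\circ,\Delta_r)$ is a Zinbiel bialgebra at all. By Theorem~\ref{thm:coboundary-Zinbiel}, that requires \eqref{eq:coboundary-Zinbiel-condition-one} together with $H(x)\llbracket r,r\rrbracket=0$. The first holds because $a=0$, and the second follows from $\llbracket r,r\rrbracket=0$. Hence $\Delta_r^*$ is a Zinbiel product on $A^*$, so Lemma~\ref{lem:admissible-Zinbiel-ybe} and Theorem~\ref{thm:avg-Zinbiel-ybe-sufficient} apply.

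The main obstacle is not in this theorem but in Theorem~\ref{thm:avg-Zinbiel-ybe-sufficient}, whose proof of \eqref{eq:averaging Zinbiel bialgebraP*1} and \eqref{eq:averaging Zinbiel bialgebraP*2} is only sketched. If I had to check it, I would start with the identities it invokes, $S L_x S=L_{P(x)}S$ and $P L_x P=L_{P(x)}P$, obtained from $S$-admissibility and the averaging property. I would then use \eqref{eq:Zinbiel-S-id-id-P} and \eqref{eq:Zinbiel-P-id-id-S} to move $S$ and $P$ between the tensor legs of $r$, exactly as was done for \eqref{eq:averaging Zinbiel coalgebra}. For the present statement, however, the proof reduces to citing those results together with the observation $a=0$.
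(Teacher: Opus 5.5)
Your proposal is correct and is essentially the paper's proof. Both arguments reduce to Theorem~\ref{thm:avg-Zinbiel-ybe-sufficient}: Eq.~\eqref{eq:coboundary-Zinbiel-condition-one} holds because $a=0$, and the second compatibility condition comes from Remark~\ref{rem:symmetric-S-admissible-Zinbiel-ybe}, which, as you note, is redundant since Definition~\ref{def:S-admissible-Zinbiel-ybe} already contains both conditions; your extra check via Theorem~\ref{thm:coboundary-Zinbiel} that $\Delta_r^*$ is a Zinbiel product, so that Lemma~\ref{lem:admissible-Zinbiel-ybe} applies, spells out a step the paper leaves implicit.
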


\begin{proof}
Since $r$ is symmetric, its skew symmetric part is zero. Hence the condition
\[
\bigl(
L_{x\circ y}\otimes \operatorname{id}
-\operatorname{id}\otimes L_{x\circ y}
-L_xL_y\otimes \operatorname{id}
+L_x\otimes L_y
\bigr)(a)=0
\]
holds automatically.

Moreover, by Remark~\ref{rem:symmetric-S-admissible-Zinbiel-ybe} and
\eqref{eq:S-admissible-Zinbiel-ybe-P-S}, we also have
\[
(P\otimes \operatorname{id})(r)=(\operatorname{id}\otimes S)(r).
\]
Thus the assumptions of
Theorem~\ref{thm:avg-Zinbiel-ybe-sufficient} are satisfied. Therefore,
\[
(A,\circ,\Delta_r,P,S)
\]
is an averaging Zinbiel bialgebra.
\end{proof}

Next, we introduce the notion of a relative Rota-Baxter operator on an averaging
Zinbiel algebra.

\begin{definition}\label{def:relative-rb-avg-Zinbiel}
Let $((V;\rho,\mu),\alpha)$ be a representation of an averaging Zinbiel
algebra $((A,\circ),P)$. A linear map $T:V\to A$ is called a {\bf relative
Rota-Baxter operator} on $((A,\circ),P)$ with respect to the representation
$((V;\rho,\mu),\alpha)$ if the following equations hold:
\begin{equation}\label{eq:relative-rb-Zinbiel}
T(u)\circ T(v)
=
T\bigl(\rho(T(u))v+\mu(T(v))u\bigr),
\qquad \forall u,v\in V,
\end{equation}
and
\begin{equation}\label{eq:relative-rb-Zinbiel-avg}
P\circ T=T\circ \alpha.
\end{equation}
Moreover, a relative Rota-Baxter operator on an averaging Zinbiel algebra
with respect to the regular representation $((A;L,R),P)$ is called a
Rota-Baxter operator of weight $0$.
\end{definition}

\begin{proposition}\label{prop:relative-rb-induced-avg-Zinbiel}
Let $T:V\to A$ be a relative Rota-Baxter operator on an averaging Zinbiel
algebra $((A,\circ),P)$ with respect to the representation
$((V;\rho,\mu),\alpha)$. Define a bilinear product $\circ_T$ on $V$ by
\begin{equation}\label{eq:descendent-product-relative-rb-Zinbiel}
u\circ_T v
:=
\rho(T(u))v+\mu(T(v))u,
\qquad \forall u,v\in V.
\end{equation}
Then $((V,\circ_T),\alpha)$ is an averaging Zinbiel algebra, which is called
the descendent averaging Zinbiel algebra.
\end{proposition}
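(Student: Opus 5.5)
The proof splits into two parts: first show that $(V,\circ_T)$ is a Zinbiel algebra, then show that $\alpha$ is an averaging operator for $\circ_T$.

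\emph{Step 1: the Zinbiel identity.} This is the standard descendent construction for relative Rota-Baxter operators on Zinbiel algebras (as in \cite{WY2026}). I would argue via the semi-direct product of Proposition~\ref{prop:semi-direct-avg-Zinbiel}. The graph $\{T(u)+u : u\in V\}$ is a subalgebra of $A\ltimes_{\rho,\mu}V$ precisely because of Eq.~\eqref{eq:relative-rb-Zinbiel}. Indeed,
\[
(T(u)+u)\circ_{\rho,\mu}(T(v)+v)=T(u)\circ T(v)+\rho(T(u))v+\mu(T(v))u=T(u\circ_T v)+u\circ_T v .
\]
The projection of the graph onto $V$ is a linear isomorphism, and it transports the subalgebra product to $\circ_T$. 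Hence $(V,\circ_T)$ is a Zinbiel algebra. If one prefers a direct argument, one can expand $u\circ_T(v\circ_T w)$ using \eqref{eq:representation of Zinbiel algebra1}--\eqref{eq:representation of Zinbiel algebra2} together with $T(v\circ_T w)=T(v)\circ T(w)$.

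\emph{Step 2: $\alpha$ is averaging for $\circ_T$.} I would first compute
\[
\alpha(u)\circ_T\alpha(v)=\rho(T\alpha(u))\alpha(v)+\mu(T\alpha(v))\alpha(u)=\rho(P(Tu))\alpha(v)+\mu(P(Tv))\alpha(u),
\]
using $T\alpha=PT$ from Eq.~\eqref{eq:relative-rb-Zinbiel-avg}. Next,
\[
\alpha(\alpha(u)\circ_T v)=\alpha\bigl(\rho(P(Tu))v\bigr)+\alpha\bigl(\mu(Tv)\alpha(u)\bigr).
\]
By \eqref{eq:representation of averagingZinbiel algebra1} with $x=Tu$, the first term equals $\rho(P(Tu))\alpha(v)$. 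By \eqref{eq:representation of averagingZinbiel algebra2} with $x=Tv$, the second term equals $\mu(P(Tv))\alpha(u)$. Symmetrically,
\[
\alpha(u\circ_T\alpha(v))=\alpha\bigl(\rho(Tu)\alpha(v)\bigr)+\alpha\bigl(\mu(P(Tv))u\bigr),
\]
which equals the same expression by the other halves of \eqref{eq:representation of averagingZinbiel algebra1} and \eqref{eq:representation of averagingZinbiel algebra2}.

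The main point requiring care is Step 1, since Step 2 is a direct term-by-term match with the representation axioms. Using the graph argument in Step 1 avoids the lengthy direct verification. The only thing to check there is that the graph is closed under the product, which is exactly Eq.~\eqref{eq:relative-rb-Zinbiel}.
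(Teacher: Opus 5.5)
Your proof is correct. Step~2 is essentially the paper's argument: the paper also rewrites $\alpha(u)\circ_T\alpha(v)$ as $\rho(P(Tu))\alpha(v)+\mu(P(Tv))\alpha(u)$ via $P\circ T=T\circ\alpha$. It then matches this with $\alpha(\alpha(u)\circ_T v)$ and $\alpha(u\circ_T\alpha(v))$ using the two halves of each representation compatibility identity, as you do.

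The difference is Step~1. The paper verifies the Zinbiel identity by the direct expansion you list as a fallback: it expands $u\circ_T(v\circ_T w)$ and $(u\circ_T v+v\circ_T u)\circ_T w$ with the three Zinbiel representation identities and $T(v\circ_T w)=T(v)\circ T(w)$. You instead identify $(V,\circ_T)$ with the graph $\{T(u)+u : u\in V\}$, which is a subalgebra of $A\ltimes_{\rho,\mu}V$. The paper's computation is self-contained. Yours is shorter and more conceptual, but relies on the cited fact that $A\ltimes_{\rho,\mu}V$ is Zinbiel.

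Your viewpoint can be pushed further than you took it. By $P\circ T=T\circ\alpha$ one has $(P+\alpha)(T(u)+u)=T(\alpha(u))+\alpha(u)$, so the graph is stable under $P+\alpha$. By Proposition~\ref{prop:semi-direct-avg-Zinbiel}, $P+\alpha$ is an averaging operator on $A\ltimes_{\rho,\mu}V$. Averaging identities restrict to invariant subalgebras, and $P+\alpha$ corresponds to $\alpha$ under your isomorphism, so Step~2 also comes for free.

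The same picture yields Corollary~\ref{cor:relative-rb-homomorphism-avg-Zinbiel} at once. Indeed, $T$ is the composite of $u\mapsto T(u)+u$ with the projection $A\ltimes_{\rho,\mu}V\to A$, and this projection is a homomorphism intertwining $P+\alpha$ with $P$.
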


\begin{proof}
For all $u,v\in V$, by \eqref{eq:relative-rb-Zinbiel}, we have
\[
T(u\circ_T v)=T(u)\circ T(v).
\]
We first prove that $(V,\circ_T)$ is a Zinbiel algebra. For all
$u,v,w\in V$, we have
\[
\begin{aligned}
u\circ_T(v\circ_T w)
={}&
\rho(Tu)\bigl(\rho(Tv)w+\mu(Tw)v\bigr)
+\mu\bigl(T(v\circ_T w)\bigr)u                                      \\
={}&
\rho(Tu)\rho(Tv)w+\rho(Tu)\mu(Tw)v
+\mu(Tv\circ Tw)u.
\end{aligned}
\]
Using the representation identities of a Zinbiel algebra,
\[
\rho(x)\rho(y)=\rho(x\circ y)+\rho(y\circ x),
\qquad
\rho(x)\mu(y)=\mu(x\circ y),
\]
we get
\[
\begin{aligned}
u\circ_T(v\circ_T w)
={}&
\rho(Tu\circ Tv)w+\rho(Tv\circ Tu)w
+\mu(Tu\circ Tw)v+\mu(Tv\circ Tw)u.
\end{aligned}
\]
On the other hand,
\[
\begin{aligned}
&(u\circ_T v+v\circ_T u)\circ_T w                                      \\
={}&
\rho\bigl(T(u\circ_T v+v\circ_T u)\bigr)w
+\mu(Tw)(u\circ_T v+v\circ_T u)                                      \\
={}&
\rho(Tu\circ Tv+Tv\circ Tu)w                                      \\
&\quad
+\mu(Tw)\bigl(\rho(Tu)v+\mu(Tv)u+\rho(Tv)u+\mu(Tu)v\bigr).
\end{aligned}
\]
Since
\[
\mu(y)\rho(x)+\mu(y)\mu(x)=\mu(x\circ y),
\]
we obtain
\[
\begin{aligned}
(u\circ_T v+v\circ_T u)\circ_T w
={}&
\rho(Tu\circ Tv)w+\rho(Tv\circ Tu)w  \\
&\quad
+\mu(Tu\circ Tw)v+\mu(Tv\circ Tw)u.
\end{aligned}
\]
Thus
\[
u\circ_T(v\circ_T w)
=
(u\circ_T v+v\circ_T u)\circ_T w,
\qquad \forall u,v,w\in V.
\]
Hence $(V,\circ_T)$ is a Zinbiel algebra.

It remains to prove that $\alpha$ is an averaging operator on
$(V,\circ_T)$. For all $u,v\in V$, by \eqref{eq:relative-rb-Zinbiel-avg},
we have $T(\alpha(u))=P(T(u))$ and $T(\alpha(v))=P(T(v))$. Therefore,
\[
\begin{aligned}
\alpha(u)\circ_T\alpha(v)
&=
\rho(T(\alpha(u)))\alpha(v)+\mu(T(\alpha(v)))\alpha(u)\\
&=
\rho(P(Tu))\alpha(v)+\mu(P(Tv))\alpha(u).
\end{aligned}
\]
Using that $((V;\rho,\mu),\alpha)$ is a representation of the averaging
Zinbiel algebra $((A,\circ),P)$, we have
\[
\rho(Px)\alpha(v)
=
\alpha(\rho(Px)v)
=
\alpha(\rho(x)\alpha(v)),
\]
and
\[
\mu(Px)\alpha(v)
=
\alpha(\mu(Px)v)
=
\alpha(\mu(x)\alpha(v)).
\]
Hence
\[
\begin{aligned}
\alpha(u)\circ_T\alpha(v)
&=
\alpha\bigl(\rho(P(Tu))v\bigr)
+\alpha\bigl(\mu(P(Tv))u\bigr)                                      \\
&=
\alpha\bigl(\rho(T(\alpha(u)))v+\mu(Tv)\alpha(u)\bigr)               \\
&=
\alpha\bigl(\alpha(u)\circ_T v\bigr).
\end{aligned}
\]
Similarly,
\[
\begin{aligned}
\alpha(u)\circ_T\alpha(v)
&=
\alpha\bigl(\rho(Tu)\alpha(v)+\mu(T(\alpha(v)))u\bigr)\\
&=
\alpha\bigl(u\circ_T\alpha(v)\bigr).
\end{aligned}
\]
Therefore,
\[
\alpha(u)\circ_T\alpha(v)
=
\alpha\bigl(\alpha(u)\circ_T v\bigr)
=
\alpha\bigl(u\circ_T\alpha(v)\bigr),
\qquad \forall u,v\in V.
\]
Thus $\alpha$ is an averaging operator on $(V,\circ_T)$. Consequently,
$((V,\circ_T),\alpha)$ is an averaging Zinbiel algebra.
\end{proof}

\begin{corollary}\label{cor:relative-rb-homomorphism-avg-Zinbiel}
Let $T:V\to A$ be a relative Rota-Baxter operator on an averaging Zinbiel
algebra $((A,\circ),P)$ with respect to the representation
$((V;\rho,\mu),\alpha)$. Then $T$ is a homomorphism from the descendent
averaging Zinbiel algebra $((V,\circ_T),\alpha)$ to the averaging Zinbiel
algebra $((A,\circ),P)$.
\end{corollary}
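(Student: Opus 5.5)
We need to check the two conditions in Definition~\ref{def:homomorphism-avg-Zinbiel}, with $((V,\circ_T),\alpha)$ as the source and $((A,\circ),P)$ as the target. Both are essentially already contained in the definition of a relative Rota-Baxter operator, so the proof will be a short verification and not a computation.

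First, multiplicativity. For all $u,v\in V$, the definition \eqref{eq:descendent-product-relative-rb-Zinbiel} of $\circ_T$ together with \eqref{eq:relative-rb-Zinbiel} gives
\[
T(u\circ_T v)=T\bigl(\rho(T(u))v+\mu(T(v))u\bigr)=T(u)\circ T(v).
\]
This identity was already noted at the start of the proof of Proposition~\ref{prop:relative-rb-induced-avg-Zinbiel}. So $T$ is a Zinbiel algebra homomorphism from $(V,\circ_T)$ to $(A,\circ)$.

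Second, compatibility with the operators. By \eqref{eq:relative-rb-Zinbiel-avg}, $T(\alpha(u))=P(T(u))$ for all $u\in V$. This is exactly the condition $f\circ P=P'\circ f$ in Definition~\ref{def:homomorphism-avg-Zinbiel}, with $f=T$, $P=\alpha$ on the source and $P'=P$ on the target.

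Finally, Proposition~\ref{prop:relative-rb-induced-avg-Zinbiel} guarantees that $((V,\circ_T),\alpha)$ is an averaging Zinbiel algebra, so the source is a legitimate object. Combining the two identities, $T$ is a homomorphism of averaging Zinbiel algebras. No step is a real obstacle; the only care needed is to match the roles of $\alpha$ and $P$ correctly in the homomorphism definition.
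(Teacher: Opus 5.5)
Your proof is correct and is the intended argument. The paper states this corollary without a separate proof because both conditions are immediate: $T(u\circ_T v)=T(u)\circ T(v)$ follows from \eqref{eq:relative-rb-Zinbiel}, as noted at the start of the proof of Proposition~\ref{prop:relative-rb-induced-avg-Zinbiel}, and $T\circ\alpha=P\circ T$ is \eqref{eq:relative-rb-Zinbiel-avg}, with $\alpha$ on the source and $P$ on the target exactly as you matched them in Definition~\ref{def:homomorphism-avg-Zinbiel}.
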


\begin{proposition}\label{prop:relative-rb-matched-pair-avg-Zinbiel}
Let $T:V\to A$ be a relative Rota-Baxter operator on an averaging Zinbiel
algebra $((A,\circ),P)$ with respect to the representation
$((V;\rho,\mu),\alpha)$. Then
\[
\Bigl(
((A,\circ),P),\
((V,\circ_T),\alpha),\
\rho,\mu,\rho_T,\mu_T
\Bigr)
\]
is a matched pair of averaging Zinbiel algebras, where
$((V,\circ_T),\alpha)$ is the descendent averaging Zinbiel algebra with
\[
u\circ_T v
=
\rho(T(u))v+\mu(T(v))u,
\qquad \forall u,v\in V,
\]
and $\rho_T,\mu_T:V\to \operatorname{End}(A)$ are defined by
\[
\rho_T(u)x
=
T(u)\circ x-T(\mu(x)u),
\]
and
\[
\mu_T(u)x
=
x\circ T(u)-T(\rho(x)u),
\]
for all $u\in V$ and $x\in A$.
\end{proposition}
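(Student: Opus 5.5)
We must show four things. The averaging Zinbiel algebras ((A,\circ),P) and ((V,\circ_T),\alpha) are given, the latter by Proposition~\ref{prop:relative-rb-induced-avg-Zinbiel}. Next, ((V;\rho,\mu),\alpha) is a representation of ((A,\circ),P) by hypothesis. Third, ((A;\rho_T,\mu_T),P) is a representation of ((V,\circ_T),\alpha). Finally, the six identities \eqref{eq:matched-pair-Zinbiel1}--\eqref{eq:matched-pair-Zinbiel6} hold. The cleanest route avoids checking the Zinbiel part directly, via Proposition~\ref{pro:double-avg-Zinbiel}: it suffices to show that $A\oplus V$ with the product
\[
(x+u)\circ_{\bowtie}(y+v)=x\circ y+\rho_T(u)y+\mu_T(v)x+u\circ_T v+\rho(x)v+\mu(y)u
\]
and the operator $P+\alpha$ is an averaging Zinbiel algebra, because the ``if and only if'' there (together with the corresponding statement of \cite{WY2026} for Zinbiel algebras) returns the matched pair identities and the representation axioms.

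To obtain the Zinbiel structure on $A\oplus V$, I would use the standard graph trick. Consider the semi-direct product $A\ltimes_{\rho,\mu}V$ of Proposition~\ref{prop:semi-direct-avg-Zinbiel} and the linear automorphism $\Phi(x+u)=x+T(u)+u$ of $A\oplus V$, with inverse $x+u\mapsto x-T(u)+u$. Using \eqref{eq:relative-rb-Zinbiel}, one checks that
\[
\Phi^{-1}\bigl(\Phi(x+u)\circ_{\rho,\mu}\Phi(y+v)\bigr)=(x+u)\circ_{\bowtie}(y+v).
\]
Concretely, expand $(x+Tu+u)\circ_{\rho,\mu}(y+Tv+v)$. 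The $V$-component is $\rho(x)v+\mu(y)u+\rho(Tu)v+\mu(Tv)u$. Subtracting $T$ of it from the $A$-component, the terms $Tu\circ Tv-T(u\circ_T v)$ cancel. What remains is $x\circ y+Tu\circ y-T(\mu(y)u)+x\circ Tv-T(\rho(x)v)$, which is exactly $x\circ y+\rho_T(u)y+\mu_T(v)x$. Hence $\circ_{\bowtie}$ is the transport of a Zinbiel product, and is therefore Zinbiel. This already yields the Zinbiel matched pair of \cite{WY2026}.

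For the averaging part, \eqref{eq:relative-rb-Zinbiel-avg} gives $\Phi\circ(P+\alpha)=(P+\alpha)\circ\Phi$, since $P(x)+PT(u)+\alpha(u)=P(x)+T\alpha(u)+\alpha(u)$. By Proposition~\ref{prop:semi-direct-avg-Zinbiel}, $P+\alpha$ is averaging on $A\ltimes_{\rho,\mu}V$. Hence $\Phi$ is an isomorphism of averaging Zinbiel algebras, and $P+\alpha$ is averaging on $(A\oplus V,\circ_{\bowtie})$. Restricting the averaging identities to mixed arguments $u\in V$, $y\in A$ and projecting to $A$ gives
\[
\rho_T(\alpha u)P(y)=P(\rho_T(\alpha u)y)=P(\rho_T(u)P(y)),
\]
and similarly for $\mu_T$, which is the representation condition (c). Condition (b) holds by hypothesis.

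The only delicate point is the bookkeeping in the conjugation identity, namely the signs in $\rho_T$ and $\mu_T$ and which of $\rho$, $\mu$ pairs with which argument. I expect that to be the main obstacle, but it is a two-line expansion. Everything else follows formally from the earlier propositions.
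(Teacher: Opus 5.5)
Your proof is correct, but it takes a different route from the paper.

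\textbf{The paper's route.} The paper verifies the two triple equalities for $(\rho_T,\mu_T)$ by direct computation. For example, it writes $\rho_T(\alpha(u))P(x)=P(T(u))\circ P(x)-T(\mu(P(x))\alpha(u))$ and then uses, in turn, the averaging identity of $P$, the compatibility $\mu(P(x))\alpha(u)=\alpha(\mu(x)\alpha(u))$, and $PT=T\alpha$. The purely Zinbiel part, namely that $(A,V;(\rho,\mu),(\rho_T,\mu_T))$ is a matched pair, is quoted from \cite{WY2026} without proof.

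\textbf{Your route.} You show that $\Phi(x+u)=x+T(u)+u$ is an isomorphism of averaging Zinbiel algebras from $(A\oplus V,\circ_{\bowtie},P+\alpha)$ onto $(A\ltimes_{\rho,\mu}V,P+\alpha)$.
\begin{itemize}
\item The relative Rota--Baxter identity enters exactly once, in the conjugation formula. That formula is correct: the $A$-component is $x\circ y+\rho_T(u)y+\mu_T(v)x$ and the $V$-component is $u\circ_T v+\rho(x)v+\mu(y)u$.
\item The hypothesis $PT=T\alpha$ is precisely the statement $\Phi(P+\alpha)=(P+\alpha)\Phi$.
\item Everything else follows by projecting identities of the double onto components.
\end{itemize}

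\textbf{What each buys.} Your argument is self-contained: it proves the ingredient the paper imports from \cite{WY2026}. It also yields the stronger structural fact that $A\bowtie V_T\cong A\ltimes_{\rho,\mu}V$ as averaging Zinbiel algebras. The paper's computation is more explicit about which hypothesis is used at each equality.

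\textbf{A citation point.} Proposition~\ref{pro:double-avg-Zinbiel}, as stated, assumes the averaging representation conditions from the start. So it cannot by itself return them, and what you actually need is the converse projection argument.
\begin{itemize}
\item For the averaging part you do carry this out correctly, via the $A$-components of the averaging identities for the pairs $(u,y)$ and $(y,u)$.
\item For the Zinbiel part you should say explicitly that the $A$- and $V$-components of the Zinbiel identity on mixed triples give two things. They give the representation axioms for $(\rho_T,\mu_T)$ on $(V,\circ_T)$, and they give Eqs.~\eqref{eq:matched-pair-Zinbiel1}--\eqref{eq:matched-pair-Zinbiel6}.
\end{itemize}
This step is routine, and it is exactly the converse direction of the standard matched-pair characterization.
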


\begin{proof}
By Proposition~\ref{prop:relative-rb-induced-avg-Zinbiel},
$((V,\circ_T),\alpha)$ is an averaging Zinbiel algebra. Since
$((V;\rho,\mu),\alpha)$ is a representation of $((A,\circ),P)$ by assumption,
we only need to check that $((A;\rho_T,\mu_T),P)$ is a representation of
$((V,\circ_T),\alpha)$ as an averaging Zinbiel algebra, and then use the
ordinary matched pair result induced by a relative Rota-Baxter operator.

First, for all $u\in V$ and $x\in A$, we have
\[
\begin{aligned}
\rho_T(\alpha(u))P(x)
&=
T(\alpha(u))\circ P(x)-T(\mu(P(x))\alpha(u))  \\
&=
P(T(u))\circ P(x)-T(\mu(P(x))\alpha(u)).
\end{aligned}
\]
Using the averaging identity of $P$ and the representation compatibility of
$((V;\rho,\mu),\alpha)$, we obtain
\[
\begin{aligned}
\rho_T(\alpha(u))P(x)
&=
P(P(T(u))\circ x)-T(\alpha(\mu(x)\alpha(u)))\\
&=
P(P(T(u))\circ x)-P(T(\mu(x)\alpha(u)))\\
&=
P\bigl(T(\alpha(u))\circ x-T(\mu(x)\alpha(u))\bigr)\\
&=
P\bigl(\rho_T(\alpha(u))x\bigr).
\end{aligned}
\]
On the other hand,
\[
\begin{aligned}
P\bigl(\rho_T(u)P(x)\bigr)
&=
P\bigl(T(u)\circ P(x)-T(\mu(P(x))u)\bigr)\\
&=
P(T(u)\circ P(x))-T(\alpha(\mu(P(x))u))\\
&=
P(T(u)\circ P(x))-T(\mu(P(x))\alpha(u))\\
&=
P(T(u))\circ P(x)-T(\mu(P(x))\alpha(u))\\
&=
\rho_T(\alpha(u))P(x).
\end{aligned}
\]
Hence
\[
\rho_T(\alpha(u))P(x)
=
P\bigl(\rho_T(\alpha(u))x\bigr)
=
P\bigl(\rho_T(u)P(x)\bigr).
\]

Similarly, for $\mu_T$, we have
\[
\begin{aligned}
\mu_T(\alpha(u))P(x)
&=
P(x)\circ T(\alpha(u))-T(\rho(P(x))\alpha(u))\\
&=
P(x)\circ P(T(u))-T(\rho(P(x))\alpha(u)).
\end{aligned}
\]
Then
\[
\begin{aligned}
P\bigl(\mu_T(\alpha(u))x\bigr)
&=
P\bigl(x\circ T(\alpha(u))-T(\rho(x)\alpha(u))\bigr)\\
&=
P(x\circ P(T(u)))-T(\alpha(\rho(x)\alpha(u)))\\
&=
P(x)\circ P(T(u))-T(\rho(P(x))\alpha(u))\\
&=
\mu_T(\alpha(u))P(x),
\end{aligned}
\]
and
\[
\begin{aligned}
P\bigl(\mu_T(u)P(x)\bigr)
&=
P\bigl(P(x)\circ T(u)-T(\rho(P(x))u)\bigr)\\
&=
P(P(x)\circ T(u))-T(\alpha(\rho(P(x))u))\\
&=
P(x)\circ P(T(u))-T(\rho(P(x))\alpha(u))\\
&=
\mu_T(\alpha(u))P(x).
\end{aligned}
\]
Therefore,
\[
\mu_T(\alpha(u))P(x)
=
P\bigl(\mu_T(\alpha(u))x\bigr)
=
P\bigl(\mu_T(u)P(x)\bigr).
\]
Thus $((A;\rho_T,\mu_T),P)$ is compatible with the averaging operator.

Moreover, in the ordinary Zinbiel setting, a relative Rota-Baxter operator
$T:V\to A$ gives a matched pair of Zinbiel algebras
\[
(A,V;(\rho,\mu),(\rho_T,\mu_T)),
\]
where $V$ is endowed with the descendent Zinbiel product $\circ_T$. Hence
$(A;\rho_T,\mu_T)$ is a representation of $(V,\circ_T)$, and
\[
(A,V;(\rho,\mu),(\rho_T,\mu_T))
\]
is a matched pair of Zinbiel algebras.

Combining this ordinary matched pair with the above compatibility conditions
for the averaging operators, we conclude that
\[
\Bigl(
((A,\circ),P),\
((V,\circ_T),\alpha),\
\rho,\mu,\rho_T,\mu_T
\Bigr)
\]
is a matched pair of averaging Zinbiel algebras.
\end{proof}

\begin{proposition}\label{prop:relative-rb-coregular-gives-S-admissible-Zinbiel-ybe}
Let $((A,\circ),P)$ be an averaging Zinbiel algebra. Suppose that
$r\in A\otimes A$ is symmetric and that $S:A\to A$ is a linear map.
Let
\[
r^\sharp:A^*\to A
\]
be defined by
\[
r^\sharp(\xi)=\sum_{i=1}^n \langle \xi,a_i\rangle b_i,
\qquad
\text{for } r=\sum_{i=1}^n a_i\otimes b_i.
\]
If $r^\sharp:A^*\to A$ is a relative Rota-Baxter operator on
$((A,\circ),P)$ with respect to the representation
\[
\bigl((A^*;-L^*-R^*,R^*),S^*\bigr),
\]
then $r$ is a solution of the $S$-admissible Zinbiel Yang-Baxter equation
in the averaging Zinbiel algebra $((A,\circ),P)$.
\end{proposition}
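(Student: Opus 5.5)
The proof splits into two parts. First, the Rota--Baxter identity \eqref{eq:relative-rb-Zinbiel} for $T=r^\sharp$ gives the Zinbiel Yang--Baxter equation \eqref{eq:S-admissible-Zinbiel-ybe-main}. Second, the averaging compatibility \eqref{eq:relative-rb-Zinbiel-avg}, $P\circ r^\sharp=r^\sharp\circ S^*$, gives \eqref{eq:S-admissible-Zinbiel-ybe-P-S}. Remark~\ref{rem:symmetric-S-admissible-Zinbiel-ybe} together with the symmetry of $r$ then gives \eqref{eq:S-admissible-Zinbiel-ybe-S-P}.

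The compatibility part is easy, so I do it first. For $\xi,\eta\in A^*$ we have
\[
\langle \eta, P r^\sharp(\xi)\rangle=\sum_i\langle\xi,a_i\rangle\langle\eta,P(b_i)\rangle=\langle \xi\otimes\eta,(\operatorname{id}\otimes P)(r)\rangle ,
\]
and
\[
\langle \eta, r^\sharp(S^*\xi)\rangle=\sum_i\langle\xi,S(a_i)\rangle\langle\eta,b_i\rangle=\langle \xi\otimes\eta,(S\otimes\operatorname{id})(r)\rangle .
\]
Hence $P r^\sharp=r^\sharp S^*$ is exactly $(S\otimes\operatorname{id}-\operatorname{id}\otimes P)(r)=0$. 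Since $r$ is symmetric, Remark~\ref{rem:symmetric-S-admissible-Zinbiel-ybe} yields $(P\otimes\operatorname{id}-\operatorname{id}\otimes S)(r)=0$.

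For the Yang--Baxter part, the relative Rota--Baxter identity with respect to $(A^*;-L^*-R^*,R^*)$ reads
\[
r^\sharp(\xi)\circ r^\sharp(\eta)=r^\sharp\bigl(-L^*_{r^\sharp(\xi)}\eta-R^*_{r^\sharp(\xi)}\eta+R^*_{r^\sharp(\eta)}\xi\bigr).
\]
I pair both sides with a third covector $\zeta$ and rewrite everything as $\langle\xi\otimes\eta\otimes\zeta,\,\cdot\,\rangle$ applied to leg-notation expressions in $r$. Symmetry of $r$ lets me freely use $r^\sharp(\xi)=\sum_i\langle\xi,b_i\rangle a_i$, so $r_{21}=r_{12}$ wherever needed.
\begin{itemize}
\item The left side becomes $\langle\xi\otimes\eta\otimes\zeta, r_{13}\circ r_{23}\rangle$, with the slots placed appropriately.
\item Each of the three terms on the right becomes, after moving the dual operators across the pairing, a term like $r_{12}\circ r_{23}$, $r_{23}\circ r_{12}$, or $r_{13}\circ r_{12}$.
\end{itemize}
This produces an identity $\Phi(r)=0$ in $A^{\otimes 3}$ built from five or six products of legs. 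I will then check, again using symmetry of $r$ and the relations $x\cdot y=x\circ y+y\circ x$, that $\Phi(r)$ coincides, possibly after a permutation of tensor factors, with $\llbracket r,r\rrbracket$ as defined before Theorem~\ref{thm:coboundary-Zinbiel}. This is the analogue of the known Zinbiel result in \cite{WY2026} that a symmetric $r$ is a solution of the ZYBE if and only if $r^\sharp$ is a relative Rota--Baxter operator for the coregular representation, and I will invoke that result if it is available.

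The main obstacle is the bookkeeping in this last step: matching the sign conventions of $-L^*-R^*$ and $R^*$ with the five terms of $\llbracket r,r\rrbracket$, and choosing the correct permutation of tensor slots. My approach is to expand in the basis $r=\sum_{i}a_i\otimes b_i$, evaluate term by term on $\xi\otimes\eta\otimes\zeta$, and combine the two $\circ$-terms of each product $\cdot$ using symmetry of $r$ to swap $a_i\leftrightarrow b_i$ where needed.
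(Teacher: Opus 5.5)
Your proposal is correct and follows the paper's proof. The compatibility $Pr^\sharp=r^\sharp S^*$ becomes $(S\otimes\operatorname{id}-\operatorname{id}\otimes P)(r)=0$ by pairing, symmetry of $r$ gives the companion condition, and the Yang--Baxter part comes from the ordinary Zinbiel correspondence of \cite{WY2026}; your fallback expansion also closes, since pairing yields $\Phi(r)=r_{13}\circ r_{23}-r_{12}\cdot r_{23}+r_{13}\circ r_{21}$ (four terms), and for symmetric $r$ swapping the second and third tensor factors of $\Phi(r)$ gives exactly $\llbracket r,r\rrbracket$.
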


\begin{proof}
Since $r^\sharp:A^*\to A$ is a relative Rota-Baxter operator on
$((A,\circ),P)$ with respect to
$\bigl((A^*;-L^*-R^*,R^*),S^*\bigr)$, we have
\begin{equation}\label{eq:coregular-relative-rb-Zinbiel}
r^\sharp(\xi)\circ r^\sharp(\eta)
=
r^\sharp\Bigl(
(-L^*_{r^\sharp(\xi)}-R^*_{r^\sharp(\xi)})\eta
+
R^*_{r^\sharp(\eta)}\xi
\Bigr),
\qquad \forall \xi,\eta\in A^*,
\end{equation}
and
\begin{equation}\label{eq:rsharp-avg-compatible}
P\circ r^\sharp=r^\sharp\circ S^*.
\end{equation}
By the ordinary Zinbiel algebra result relating symmetric solutions of the
Zinbiel Yang-Baxter equation and relative Rota-Baxter operators with respect
to the coregular representation, \eqref{eq:coregular-relative-rb-Zinbiel}
implies that $r$ is a solution of the Zinbiel Yang-Baxter equation, namely
\[
\llbracket r,r\rrbracket=0.
\]

It remains to verify the averaging admissibility conditions. For any
$\xi\in A^*$, by \eqref{eq:rsharp-avg-compatible}, we have
\[
P r^\sharp(\xi)=r^\sharp(S^*\xi).
\]
Since $r=\sum_{i=1}^n a_i\otimes b_i$, this gives
\[
P\left(\sum_{i=1}^n \langle \xi,a_i\rangle b_i\right)
=
\sum_{i=1}^n \langle S^*\xi,a_i\rangle b_i.
\]
Hence
\[
\sum_{i=1}^n \langle \xi,a_i\rangle P(b_i)
=
\sum_{i=1}^n \langle \xi,S(a_i)\rangle b_i,
\qquad \forall \xi\in A^*.
\]
Therefore,
\[
(\operatorname{id}\otimes P)(r)=(S\otimes \operatorname{id})(r),
\]
that is,
\[
(S\otimes \operatorname{id}-\operatorname{id}\otimes P)(r)=0.
\]
Since $r$ is symmetric, this condition is equivalent to
\[
(P\otimes \operatorname{id}-\operatorname{id}\otimes S)(r)=0.
\]
Thus $r$ satisfies
\[
\llbracket r,r\rrbracket=0,
\qquad
(S\otimes \operatorname{id}-\operatorname{id}\otimes P)(r)=0,
\qquad
(P\otimes \operatorname{id}-\operatorname{id}\otimes S)(r)=0.
\]
Hence $r$ is a solution of the $S$-admissible Zinbiel Yang-Baxter equation
in $((A,\circ),P)$. This completes the proof.
\end{proof}

\begin{proposition}\label{prop:relative-rb-coregular-vs-rb-Zinbiel}
Let $((A,\circ),P,\omega)$ be a quadratic averaging Zinbiel algebra and let
$T:A^*\to A$ be a linear map. Then $T$ is a relative Rota-Baxter operator on
$((A,\circ),P)$ with respect to the coregular representation
\[
\bigl((A^*;-L^*-R^*,R^*),P^*\bigr)
\]
if and only if
\[
B:=T\circ\omega^\sharp:A\to A
\]
is a Rota-Baxter operator of weight $0$ on the averaging Zinbiel algebra
$((A,\circ),P)$.
\end{proposition}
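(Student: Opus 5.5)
The plan is to transport everything along the isomorphism $\omega^\sharp$ of Proposition~\ref{pro:quadratic-avg-Zinbiel-isomorphism}. That result says $\omega^\sharp:A\to A^*$ is an isomorphism of representations from $((A;L,R),P)$ to $((A^*;-L^*-R^*,R^*),P^*)$. Concretely, for all $x,y\in A$:
\[
(-L^*_x-R^*_x)\omega^\sharp(y)=\omega^\sharp(x\circ y),\qquad R^*_x\omega^\sharp(y)=\omega^\sharp(y\circ x),\qquad P^*\omega^\sharp=\omega^\sharp P.
\]
Since $\omega^\sharp$ is bijective, $T$ and $B=T\circ\omega^\sharp$ determine each other via $T=B\circ(\omega^\sharp)^{-1}$. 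So it suffices to compare the two defining conditions of Definition~\ref{def:relative-rb-avg-Zinbiel} term by term.

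\emph{Rota-Baxter identity.} Put $\xi=\omega^\sharp(x)$ and $\eta=\omega^\sharp(y)$; as $x,y$ range over $A$, these range over all of $A^*$. Then $T(\xi)=B(x)$ and $T(\eta)=B(y)$. The right-hand side of \eqref{eq:relative-rb-Zinbiel} for $T$ is
\[
T\bigl((-L^*_{T\xi}-R^*_{T\xi})\eta+R^*_{T\eta}\xi\bigr)
=T\bigl(\omega^\sharp(B(x)\circ y)+\omega^\sharp(x\circ B(y))\bigr)
=B\bigl(B(x)\circ y+x\circ B(y)\bigr).
\]
The left-hand side is $B(x)\circ B(y)$. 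Hence \eqref{eq:relative-rb-Zinbiel} for $T$ with respect to the coregular representation is equivalent to \eqref{eq:relative-rb-Zinbiel} for $B$ with respect to the regular representation $(A;L,R)$, that is, to the weight-zero Rota-Baxter identity.

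\emph{Averaging compatibility.} The condition \eqref{eq:relative-rb-Zinbiel-avg} for $T$ reads $P\circ T=T\circ P^*$. Composing on the right with the bijection $\omega^\sharp$ and using $P^*\omega^\sharp=\omega^\sharp P$, it becomes $P\circ B=T\circ\omega^\sharp\circ P=B\circ P$. This is exactly \eqref{eq:relative-rb-Zinbiel-avg} for $B$ with respect to $((A;L,R),P)$, where $\alpha=P$. Every step is reversible, so the equivalence holds in both directions.

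The only real care needed is in the first step: matching the slots $\rho=-L^*-R^*$ and $\mu=R^*$ with the correct intertwining identities. One also has to note that $\omega^\sharp$ intertwines $R^*_x$ with $R_x$ even though $\omega$ is skew-symmetric. This follows from invariance \eqref{eq:quadratic-Zinbiel-invariant} together with the Zinbiel identity, and it was already established in Proposition~\ref{pro:quadratic-avg-Zinbiel-isomorphism}, so I would simply invoke it. Beyond that, the argument is bookkeeping.
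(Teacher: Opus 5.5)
Your proposal is correct and matches the paper's proof step for step. You push the relative Rota--Baxter identity through the intertwining relations of $\omega^\sharp$ from Proposition~\ref{pro:quadratic-avg-Zinbiel-isomorphism}, and you convert $P\circ T=T\circ P^*$ into $P\circ B=B\circ P$ using $P^*\omega^\sharp=\omega^\sharp P$ and the bijectivity of $\omega^\sharp$. As a minor aside, the relation $R^*_x\omega^\sharp(y)=\omega^\sharp(y\circ x)$ follows from skew-symmetry of $\omega$ and invariance alone, with no need for the Zinbiel identity; it holds because of skew-symmetry, not despite it.
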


\begin{proof}
For all $x,y\in A$, by
Proposition~\ref{pro:quadratic-avg-Zinbiel-isomorphism}, we have
\[
(-L^*_{B(x)}-R^*_{B(x)})\omega^\sharp(y)
=
\omega^\sharp\bigl(B(x)\circ y\bigr),
\]
and
\[
R^*_{B(y)}\omega^\sharp(x)
=
\omega^\sharp\bigl(x\circ B(y)\bigr).
\]
Therefore,
\[
\begin{aligned}
&T\Bigl(
(-L^*_{B(x)}-R^*_{B(x)})\omega^\sharp(y)
+
R^*_{B(y)}\omega^\sharp(x)
\Bigr)\\
={}&
T\Bigl(
\omega^\sharp(B(x)\circ y)
+
\omega^\sharp(x\circ B(y))
\Bigr)\\
={}&
T\omega^\sharp\bigl(B(x)\circ y+x\circ B(y)\bigr)\\
={}&
B\bigl(B(x)\circ y+x\circ B(y)\bigr).
\end{aligned}
\]
Since $B(x)=T\omega^\sharp(x)$ and $B(y)=T\omega^\sharp(y)$, the relative
Rota-Baxter identity for $T$,
\[
T(\xi)\circ T(\eta)
=
T\Bigl(
(-L^*_{T(\xi)}-R^*_{T(\xi)})\eta
+
R^*_{T(\eta)}\xi
\Bigr),
\qquad \forall \xi,\eta\in A^*,
\]
is equivalent to
\[
B(x)\circ B(y)
=
B\bigl(B(x)\circ y+x\circ B(y)\bigr),
\qquad \forall x,y\in A.
\]
This means that $B=T\circ\omega^\sharp$ is a Rota-Baxter operator of
weight $0$ on the Zinbiel algebra $(A,\circ)$.

It remains to check the compatibility with the averaging operators. Since
$\omega^\sharp$ is an isomorphism from the regular representation to the
coregular representation, we have
\[
P^*\circ\omega^\sharp=\omega^\sharp\circ P.
\]
Hence
\[
P\circ T=T\circ P^*
\]
if and only if
\[
P\circ T\circ\omega^\sharp
=
T\circ P^*\circ\omega^\sharp
=
T\circ\omega^\sharp\circ P,
\]
that is,
\[
P\circ B=B\circ P.
\]
Therefore, $T$ is compatible with $P$ and $P^*$ if and only if
$B=T\circ\omega^\sharp$ is compatible with $P$.

Combining the product identity and the averaging compatibility, we obtain
that $T$ is a relative Rota-Baxter operator on $((A,\circ),P)$ with respect
to $\bigl((A^*;-L^*-R^*,R^*),P^*\bigr)$ if and only if
$T\circ\omega^\sharp$ is a Rota-Baxter operator of weight $0$ on
$((A,\circ),P)$. The proof is complete.
\end{proof}

\begin{corollary}\label{cor:symmetric-r-from-rb-avg-Zinbiel}
Let $((A,\circ),P,\omega)$ be a quadratic averaging Zinbiel algebra.
Suppose that $r\in A\otimes A$ is symmetric. If $r^\sharp\circ \omega^\sharp:A\to A$ is a Rota-Baxter operator of weight $0$ on the averaging Zinbiel algebra
$((A,\circ),P)$, then $r$ is a solution of the $P$ admissible Zinbiel
Yang-Baxter equation in $((A,\circ),P)$.
\end{corollary}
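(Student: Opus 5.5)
The plan is to reduce the corollary to Propositions~\ref{prop:relative-rb-coregular-vs-rb-Zinbiel} and \ref{prop:relative-rb-coregular-gives-S-admissible-Zinbiel-ybe}, taking $S=P$ throughout.

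\emph{Step 1.} Set $T:=r^\sharp:A^*\to A$, so that $B=T\circ\omega^\sharp=r^\sharp\circ\omega^\sharp$. By hypothesis $B$ is a Rota-Baxter operator of weight $0$ on $((A,\circ),P)$. By Definition~\ref{def:relative-rb-avg-Zinbiel}, this means
\[
B(x)\circ B(y)=B\bigl(B(x)\circ y+x\circ B(y)\bigr)
\qquad\text{and}\qquad
P\circ B=B\circ P .
\]

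\emph{Step 2.} Since $((A,\circ),P,\omega)$ is a quadratic averaging Zinbiel algebra, Proposition~\ref{prop:relative-rb-coregular-vs-rb-Zinbiel} applies. It gives that $r^\sharp$ is a relative Rota-Baxter operator on $((A,\circ),P)$ with respect to the coregular representation $((A^*;-L^*-R^*,R^*),P^*)$. This is indeed a representation by Corollary~\ref{cor:coregular-rep-avg-Zinbiel}.

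\emph{Step 3.} Apply Proposition~\ref{prop:relative-rb-coregular-gives-S-admissible-Zinbiel-ybe} with $S=P$. Its hypotheses are that $r$ is symmetric, which is assumed, and that $r^\sharp$ is a relative Rota-Baxter operator with respect to $((A^*;-L^*-R^*,R^*),S^*)=((A^*;-L^*-R^*,R^*),P^*)$, which is Step~2. It yields $\llbracket r,r\rrbracket=0$ together with
\[
(P\otimes\operatorname{id}-\operatorname{id}\otimes P)(r)=0 .
\]
By Definition~\ref{def:S-admissible-Zinbiel-ybe}, this says exactly that $r$ solves the $P$-admissible Zinbiel Yang-Baxter equation.

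\emph{Main obstacle.} The only delicate point is matching conventions. The compatibility $P\circ r^\sharp=r^\sharp\circ P^*$ must come out of $P\circ B=B\circ P$ via $P^*\circ\omega^\sharp=\omega^\sharp\circ P$. That identity is Proposition~\ref{pro:quadratic-avg-Zinbiel-isomorphism}, and it is exactly what the proof of Proposition~\ref{prop:relative-rb-coregular-vs-rb-Zinbiel} uses. Since $\omega^\sharp$ is invertible, the equivalence is two-sided, so no further verification is needed.
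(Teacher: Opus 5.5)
Your proposal is correct and matches the paper's proof. The paper also first uses Proposition~\ref{prop:relative-rb-coregular-vs-rb-Zinbiel} to turn the Rota-Baxter condition on $r^\sharp\circ\omega^\sharp$ into the statement that $r^\sharp$ is a relative Rota-Baxter operator for $((A^*;-L^*-R^*,R^*),P^*)$, and then applies Proposition~\ref{prop:relative-rb-coregular-gives-S-admissible-Zinbiel-ybe} with $S=P$.
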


\begin{proof}
By Proposition~\ref{prop:relative-rb-coregular-vs-rb-Zinbiel}, the condition
that $r^\sharp\circ \omega^\sharp$ is a Rota-Baxter operator of weight $0$
on $((A,\circ),P)$ is equivalent to saying that $r^\sharp$ is a relative Rota-Baxter operator on $((A,\circ),P)$ with respect to the
coregular representation $\bigl((A^*;-L^*-R^*,R^*),P^*\bigr).$
Since $r$ is symmetric, Proposition~\ref{prop:relative-rb-coregular-gives-S-admissible-Zinbiel-ybe}
with $S=P$ implies that $r$ is a solution of the $P$ admissible Zinbiel
Yang-Baxter equation in $((A,\circ),P)$.
\end{proof}

\begin{lemma}\label{lem:relative-rb-Zinbiel-ybe-semi-direct}
Let $(A,\circ)$ be a Zinbiel algebra and let $(V;\rho,\mu)$ be a
representation of $(A,\circ)$. Then
\[
\mathcal A:=A\ltimes_{-\rho^*-\mu^*,\,\mu^*}V^*
\]
is a Zinbiel algebra, where
\[
(x+\xi)\circ_{\mathcal A}(y+\eta)
=
x\circ y+(-\rho^*(x)-\mu^*(x))\eta+\mu^*(y)\xi,
\]
for all $x,y\in A$ and $\xi,\eta\in V^*$.

Let $T:V\to A$ be a linear map. For a basis $\{v_1,\ldots,v_n\}$ of $V$
and its dual basis $\{v_1^*,\ldots,v_n^*\}$, define
\[
\overline T:=\sum_{i=1}^n T(v_i)\otimes v_i^*
\in \mathcal A\otimes \mathcal A,
\]
and
\[
r_T:=\overline T+\sigma(\overline T)
=
\sum_{i=1}^n
\bigl(T(v_i)\otimes v_i^*+v_i^*\otimes T(v_i)\bigr).
\]
Then $r_T$ is a symmetric solution of the Zinbiel Yang-Baxter equation in
$\mathcal A$ if and only if $T$ is a relative Rota-Baxter operator on
$(A,\circ)$ with respect to the representation $(V;\rho,\mu)$, namely
\[
T(u)\circ T(v)
=
T\bigl(\rho(T(u))v+\mu(T(v))u\bigr),
\qquad \forall u,v\in V.
\]
\end{lemma}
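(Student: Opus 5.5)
The proof splits into two parts. First I check that $\mathcal A$ is a Zinbiel algebra. Then I expand $\llbracket r_T,r_T\rrbracket$ in components and match it against the relative Rota-Baxter identity.

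For the first part, Proposition~\ref{prop:dual-rep-avg-Zinbiel} (its Zinbiel part, which ignores $\beta$) says that $(V^*;-\rho^*-\mu^*,\mu^*)$ is a representation of $(A,\circ)$. The semi-direct product result of \cite{WY2026}, recalled in the proof of Proposition~\ref{prop:semi-direct-avg-Zinbiel}, then shows that $\mathcal A$ is a Zinbiel algebra. Symmetry of $r_T$ is immediate from its definition.

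For the second part I write $r_T=\sum_i (T(v_i)\otimes v_i^*+v_i^*\otimes T(v_i))$ and record the structure of $\mathcal A$. Products of two elements of $V^*$ vanish. For $x\in A$ and $\xi\in V^*$,
\[
x\circ_{\mathcal A}\xi=(-\rho^*(x)-\mu^*(x))\xi,
\qquad
\xi\circ_{\mathcal A}x=\mu^*(x)\xi,
\]
so that $x\cdot\xi=-\rho^*(x)\xi$.

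I then expand each of the five terms $r_{13}\circ r_{12}$, $r_{23}\circ r_{21}$, $r_{13}\cdot r_{21}$, $r_{12}\cdot r_{23}$, $r_{13}\cdot r_{23}$. Each term involves a product in one tensor slot only, and the two factors in that slot must not both lie in $V^*$. Hence every surviving summand lies in one of the components
\[
A\otimes V^*\otimes V^*,\qquad V^*\otimes A\otimes V^*,\qquad V^*\otimes V^*\otimes A .
\]
In each component I sort the terms by whether the product lands in $A$ (an $A\cdot A$ product, giving $T(v_i)\circ T(v_j)$ or its symmetrization) or in $V^*$ (a mixed product, giving $\rho^*(T(v_i))v_j^*$ or $\mu^*(T(v_i))v_j^*$).

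To turn the mixed terms into expressions in $T$, I rewrite $\sum_j \rho^*(T(v_i))v_j^*\otimes T(v_j)$ using dual bases:
\[
\sum_j \rho^*(x)v_j^*\otimes T(v_j)=-\sum_k v_k^*\otimes T(\rho(x)v_k),
\]
and similarly for $\mu^*$. After this substitution, each component of $\llbracket r_T,r_T\rrbracket$ becomes a sum $\sum_{i,j}(\cdots)\otimes v_i^*\otimes v_j^*$ (up to ordering of the slots). The coefficient is a combination of
\[
T(v_i)\circ T(v_j)-T\bigl(\rho(T(v_i))v_j+\mu(T(v_j))v_i\bigr)
\]
and its image under $i\leftrightarrow j$, together with symmetrized versions in which $\circ$ is replaced by $\cdot$.

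Sufficiency is then immediate: if $T$ is a relative Rota-Baxter operator, every coefficient vanishes. For necessity, I select the component whose coefficient is exactly the Rota-Baxter defect $T(v_i)\circ T(v_j)-T(\rho(T v_i)v_j+\mu(T v_j)v_i)$, with no symmetrization. Its vanishing for all $i,j$ is the relative Rota-Baxter identity, by linearity in $u=v_i$, $v=v_j$.

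I expect the main obstacle to be the bookkeeping. I need to confirm that some component isolates the unsymmetrized defect rather than only its symmetrization; the $r_{13}\circ r_{12}$ and $r_{23}\circ r_{21}$ terms should supply this asymmetry, since $\circ$ is not commutative. I also need to check that the signs coming from $-\rho^*-\mu^*$ combine, via the representation identities~\eqref{eq:representation of Zinbiel algebra1}--\eqref{eq:representation of Zinbiel algebra2}, so that the $\mu$-terms collapse correctly. I would first carry this out in the $A\otimes V^*\otimes V^*$ component and then obtain the other two by the symmetry of $r_T$.
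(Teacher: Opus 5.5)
Your proposal is correct and follows the same route as the paper, which expands $\llbracket r_T,r_T\rrbracket$ in $\mathcal A$ and pairs with $\mathcal A^*=A^*\oplus V$. Writing $D(u,v)=T(u)\circ T(v)-T\bigl(\rho(T(u))v+\mu(T(v))u\bigr)$, your bookkeeping gives the $A\otimes V^*\otimes V^*$ component as exactly $\sum_{i,j}D(v_i,v_j)\otimes v_i^*\otimes v_j^*$, and the representation identities are not needed for this (only for $\mathcal A$ to be Zinbiel). One correction: symmetry of $r_T$ (via the $\sigma_{12}$-invariance of $\llbracket r,r\rrbracket$ for symmetric $r$) only yields the $V^*\otimes A\otimes V^*$ component, whereas the $V^*\otimes V^*\otimes A$ component equals $-\sum_{i,j}v_i^*\otimes v_j^*\otimes\bigl(D(v_i,v_j)+D(v_j,v_i)\bigr)$ and must be computed directly for the sufficiency direction.
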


\begin{proof}
The tensor $r_T$ is symmetric by construction. By expanding the Zinbiel
Yang-Baxter equation in the semi-direct product
$\mathcal A=A\ltimes_{-\rho^*-\mu^*,\,\mu^*}V^*$ and pairing it with
elements of $\mathcal A^*=A^*\oplus V$, all nonzero terms are exactly
equivalent to
\[
T(u)\circ T(v)
=
T\bigl(\rho(T(u))v+\mu(T(v))u\bigr),
\qquad \forall u,v\in V.
\]
Thus $r_T$ is a solution of the Zinbiel Yang-Baxter equation in
$\mathcal A$ if and only if $T$ is a relative Rota-Baxter operator.
\end{proof}

\begin{theorem}\label{thm:relative-rb-gives-admissible-Zinbiel-ybe}
Let $T:V\to A$ be a relative Rota-Baxter operator on an averaging Zinbiel
algebra $((A,\circ),P)$ with respect to the representation
$((V;\rho,\mu),\alpha)$. Then
\[
r_T=\overline T+\sigma(\overline T)
\]
is a symmetric solution of the admissible Zinbiel Yang-Baxter equation in
the semi-direct product averaging Zinbiel algebra
\[
\left(
A\ltimes_{-\rho^*-\mu^*,\,\mu^*}V^*,\ P\oplus \alpha^*
\right),
\]
where
\[
\overline T=\sum_{i=1}^n T(v_i)\otimes v_i^*
\]
for any basis $\{v_1,\ldots,v_n\}$ of $V$ and its dual basis
$\{v_1^*,\ldots,v_n^*\}$.
\end{theorem}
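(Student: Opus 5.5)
The plan is to split the statement into three parts: that the semi-direct product is an averaging Zinbiel algebra, that $r_T$ solves the Zinbiel Yang--Baxter equation, and that $r_T$ satisfies the two admissibility equations of Definition~\ref{def:S-admissible-Zinbiel-ybe}. For the admissibility equations I take the averaging operator on $\mathcal A$ to be $\mathcal P:=P\oplus\alpha^*$, and the accompanying map $S$ to be $\mathcal P$ itself.

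First, $(\mathcal A,\mathcal P)$ is an averaging Zinbiel algebra. By Proposition~\ref{prop:semi-direct-avg-Zinbiel}, it suffices that $((V^*;-\rho^*-\mu^*,\mu^*),\alpha^*)$ is a representation of $((A,\circ),P)$. By Proposition~\ref{prop:dual-rep-avg-Zinbiel} with $\beta=\alpha$, this holds exactly when the identities \eqref{eq:representation of the averaging Zinbiel algebra1}--\eqref{eq:representation of the averaging Zinbiel algebra2} hold with $\beta=\alpha$. These coincide with \eqref{eq:representation of averagingZinbiel algebra1}--\eqref{eq:representation of averagingZinbiel algebra2}, which are part of the hypothesis that $((V;\rho,\mu),\alpha)$ is a representation of $((A,\circ),P)$.

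Second, since $T$ satisfies \eqref{eq:relative-rb-Zinbiel}, Lemma~\ref{lem:relative-rb-Zinbiel-ybe-semi-direct} shows that $r_T$ is a symmetric solution of $\llbracket r_T,r_T\rrbracket=0$ in $\mathcal A$. This step is immediate.

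Third, I verify $(\mathcal P\otimes\mathrm{id}-\mathrm{id}\otimes\mathcal P)(r_T)=0$; by symmetry of $r_T$ and Remark~\ref{rem:symmetric-S-admissible-Zinbiel-ybe}, the other equation follows. Compute
\[
(\mathcal P\otimes\mathrm{id})(r_T)=\sum_i\bigl(P T(v_i)\otimes v_i^*+\alpha^*(v_i^*)\otimes T(v_i)\bigr),
\]
\[
(\mathrm{id}\otimes\mathcal P)(r_T)=\sum_i\bigl(T(v_i)\otimes \alpha^*(v_i^*)+v_i^*\otimes PT(v_i)\bigr).
\]
The key identity is $\sum_i \alpha(v_i)\otimes v_i^*=\sum_i v_i\otimes\alpha^*(v_i^*)$; both sides represent $\alpha$ under $V\otimes V^*\cong\operatorname{End}(V)$. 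Applying $T\otimes\mathrm{id}$ and using \eqref{eq:relative-rb-Zinbiel-avg}, $PT=T\alpha$, gives
\[
\sum_i PT(v_i)\otimes v_i^*=\sum_i T(v_i)\otimes\alpha^*(v_i^*).
\]
Flipping the tensor factors gives $\sum_i\alpha^*(v_i^*)\otimes T(v_i)=\sum_i v_i^*\otimes PT(v_i)$. Hence the two displayed expressions agree.

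The main obstacle is only bookkeeping. One must make sure that $S=\mathcal P$ is the intended choice for the ``admissible'' equation. If the bialgebra consequence via Theorem~\ref{thm:symmetric-S-admissible-Zinbiel-ybe} is also wanted, one must additionally check $\mathcal P$-admissibility of $(\mathcal A,\mathcal P)$, i.e.\ that $\mathcal P$ is an averaging operator on $\mathcal A$ and Corollary~\ref{cor:coregular-rep-avg-Zinbiel} applies to it. This holds automatically since $S=\mathcal P$.
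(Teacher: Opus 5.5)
Your proposal is correct and follows the paper's proof step for step. The dual representation $((V^*;-\rho^*-\mu^*,\mu^*),\alpha^*)$ makes $(\mathcal A,P\oplus\alpha^*)$ an averaging Zinbiel algebra, Lemma~\ref{lem:relative-rb-Zinbiel-ybe-semi-direct} gives $\llbracket r_T,r_T\rrbracket=0$, and $PT=T\alpha$ kills $(\mathcal P\otimes\operatorname{id}-\operatorname{id}\otimes\mathcal P)$ on $\overline T$ and on $\sigma(\overline T)$ separately. The only difference is cosmetic: the paper checks the last step by pairing with $\mathcal A^*=A^*\oplus V$, while you use the identity $\sum_i\alpha(v_i)\otimes v_i^*=\sum_i v_i\otimes\alpha^*(v_i^*)$ directly. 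Also, with $S=\mathcal P$ the two admissibility equations are literally the same, so the appeal to Remark~\ref{rem:symmetric-S-admissible-Zinbiel-ybe} is not needed.
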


\begin{proof}
Set
\[
\mathcal A:=A\ltimes_{-\rho^*-\mu^*,\,\mu^*}V^*,
\qquad
\mathcal P:=P\oplus \alpha^*.
\]
Since $((V;\rho,\mu),\alpha)$ is a representation of $((A,\circ),P)$, the
dual representation
\[
\bigl((V^*;-\rho^*-\mu^*,\mu^*),\alpha^*\bigr)
\]
is also a representation of $((A,\circ),P)$. Therefore,
$(\mathcal A,\mathcal P)$ is an averaging Zinbiel algebra.

By Lemma~\ref{lem:relative-rb-Zinbiel-ybe-semi-direct}, the tensor
$r_T=\overline T+\sigma(\overline T)$ is a symmetric solution of the
Zinbiel Yang-Baxter equation in $\mathcal A$. It remains to verify the
admissibility condition $(\mathcal P\otimes \operatorname{id}
-\operatorname{id}\otimes \mathcal P)r_T=0.$
We first prove this condition for $\overline T$. For
$\xi,\eta\in A^*$ and $u,v\in V$, regard $\overline T$ as a bilinear form
on $\mathcal A^*=A^*\oplus V$ by $\overline T(\xi+u,\eta+v)=\langle T(u),\eta\rangle.$
Then
\[
\begin{aligned}
&\bigl((\mathcal P\otimes \operatorname{id}
-\operatorname{id}\otimes \mathcal P)\overline T\bigr)
(\xi+u,\eta+v)\\
={}&
\overline T(\mathcal P^*(\xi+u),\eta+v)
-\overline T(\xi+u,\mathcal P^*(\eta+v))\\
={}&
\overline T(P^*\xi+\alpha(u),\eta+v)
-\overline T(\xi+u,P^*\eta+\alpha(v))\\
={}&
\langle T(\alpha(u)),\eta\rangle-\langle T(u),P^*\eta\rangle\\
={}&
\langle T(\alpha(u))-P(T(u)),\eta\rangle\\
={}&0,
\end{aligned}
\]
where we used the compatibility condition $P\circ T=T\circ \alpha.$ Hence $(\mathcal P\otimes \operatorname{id}
-\operatorname{id}\otimes \mathcal P)\overline T=0.$

Similarly,
\[
\begin{aligned}
&\bigl((\mathcal P\otimes \operatorname{id}
-\operatorname{id}\otimes \mathcal P)\sigma(\overline T)\bigr)
(\xi+u,\eta+v)\\
={}&
\sigma(\overline T)(P^*\xi+\alpha(u),\eta+v)
-\sigma(\overline T)(\xi+u,P^*\eta+\alpha(v))\\
={}&
\langle T(v),P^*\xi\rangle-\langle T(\alpha(v)),\xi\rangle\\
={}&
\langle P(T(v))-T(\alpha(v)),\xi\rangle\\
={}&0.
\end{aligned}
\]
Therefore,
\[
(\mathcal P\otimes \operatorname{id}
-\operatorname{id}\otimes \mathcal P)\sigma(\overline T)=0.
\]
Consequently,
\[
(\mathcal P\otimes \operatorname{id}
-\operatorname{id}\otimes \mathcal P)r_T=0.
\]

Since $r_T$ is symmetric, this condition is exactly the admissibility
condition for the admissible Zinbiel Yang-Baxter equation with respect to
the averaging operator $\mathcal P$. Thus $r_T$ is a symmetric solution of
the admissible Zinbiel Yang-Baxter equation in
\[
\left(
A\ltimes_{-\rho^*-\mu^*,\,\mu^*}V^*,\ P\oplus \alpha^*
\right).
\]
\end{proof}

\section*{Acknowledgment}
The work is supported by Natural Science Foundation of China (No.12271292, 12471036).


\begin{thebibliography}{99}

\bibitem{Aguiar2000}
M. Aguiar, Pre-Poisson algebras, Lett. Math. Phys. 54 (2000), 263--277.

\bibitem{Aguiar2001}
M. Aguiar, On the associative analog of Lie bialgebras, J. Algebra 244
(2001), 492--532.

\bibitem{Aguiar2004}
M. Aguiar, Infinitesimal bialgebras, pre-Lie and dendriform algebras, in:
Hopf Algebras, Lecture Notes in Pure and Appl. Math. 237, Marcel Dekker,
New York, 2004, pp. 1--33.


\bibitem{Bai2008}
C. Bai, Left symmetric bialgebras and an analogue of the classical
Yang-Baxter equation, Commun. Contemp. Math. 10 (2008), 221--260.


\bibitem{Cao2000}
W. Cao, An algebraic study of averaging operators, PhD thesis, Rutgers
University at Newark, 2000.

\bibitem{Chapoton2022}
F. Chapoton, Zinbiel algebras and multiple zeta values, Doc. Math. 27
(2022), 519--533.


\bibitem{CovezFarinatiLebedManchon2023}
S. Covez, M. Farinati, V. Lebed and D. Manchon, Bialgebraic approach to
rack cohomology, Algebr. Geom. Topol. 23 (2023), 1551--1582.

\bibitem{Drinfeld1983}
V. G. Drinfel'd, Hamiltonian structures on Lie groups, Lie bialgebras and
the geometric meaning of classical Yang-Baxter equations, Sov. Math. Dokl.
27 (1983), 68--71.


\bibitem{GaoGuoZhang2021}
X. Gao, L. Guo and Y. Zhang, Cocommutative matching Rota-Baxter operators,
shuffle products with decorations and matching Zinbiel algebras, J. Algebra
586 (2021), 402--432.

\bibitem{GaoYangZhang2026}
L. Gao, M. Yang and Y. Zhang, Averaging pre-Lie bialgebras,
arXiv:2602.10671.

\bibitem{GinzburgKapranov1994}
V. Ginzburg and M. Kapranov, Koszul duality for operads, Duke Math. J. 76
(1994), 203--272.


\bibitem{Guo2012}
L. Guo, An Introduction to Rota-Baxter Algebra, Surveys of Modern
Mathematics, Vol. 4, International Press, Somerville, MA, Higher Education
Press, Beijing, 2012.

\bibitem{HouCui2027}Z. Cui and B. Hou, Averaging antisymmetric infinitesimal bialgebras and induced perm bialgebras, J. Algebra Appl. (2027), 2750126, 34pp.

\bibitem{HouShengZhou2026}
S. Hou, Y. Sheng and Y. Zhou, Averaging Lie bialgebras, J. Algebra 689
(2026), 390--422.

\bibitem{KosmannSchwarzbach2004}
Y. Kosmann Schwarzbach, Lie bialgebras, Poisson Lie groups and dressing
transformations, in: Integrability of Nonlinear Systems, Lecture Notes in
Physics 638, Springer, Berlin, 2004, pp. 107--173.


\bibitem{LodayPirashvili1993}
J.-L. Loday and T. Pirashvili, Universal enveloping algebras of Leibniz
algebras and (co)homology, Math. Ann. 296 (1993), 139--158.

\bibitem{LodayVallette2012}
J.-L. Loday and B. Vallette, Algebraic Operads, Springer, Berlin, 2012.

\bibitem{Reynolds1895}
O. Reynolds, On the dynamical theory of incompressible viscous fluids and
the determination of the criterion, Philos. Trans. R. Soc. Lond. A 186
(1895), 123--164.


\bibitem{ShengTangZhu2023}
Y. Sheng, R. Tang and C. Zhu, The controlling $L_\infty$-algebra, cohomology and homotopy of embedding tensors and Lie-Leibniz triples, Commun. Math. Phys. 386 (2021), 269-304.

\bibitem{WY2026}
Y. Wang, Zinbiel bialgebras, relative Rota-Baxter operators and the related
Yang-Baxter equation, J. Algebra 689 (2026) 656-689.

\bibitem{ZhaoLiu} Q. Zhao, G. Liu, A further study on averaging commutative and cocommutative infinitesimal bialgebras and special apre-perm bialgebras, arXiv:2510.09208
\end{thebibliography}
 \end{document}